\newcommand{\OO}{\mathcal{O}}
\newcommand{\MM}{\mathcal{M}}
\newcommand{\PP}{\mathcal{P}}
\def\QQ{\mathcal Q}
\newcommand{\RR}{\mathcal{R}}
\newcommand{\TT}{\mathcal{T}}
\newcommand{\UU}{\mathcal{U}}
\def\AA{\mathbb A}
\renewcommand{\H}{\widetilde{H}}
\newcommand{\N}{\mathbb{N}}
\newcommand{\R}{\mathbb{R}}
\def\k{\underline{k}}
\def\fine{\circ}
\def\coarse{\bullet}
\def\P{\boldsymbol{P}}
\def\A{\boldsymbol{A}}
\def\b{\boldsymbol{b}}
\def\x{\boldsymbol{x}}
\def\y{\boldsymbol{y}}
\def\PPhi{\phi}
\def\PPsi{\psi}
\def\pphi{\phi}
\def\exact{\star}
\def\opt{{\rm opt}}
\newcommand{\dual}[3][]{#1\langle#2\,,\,#3#1\rangle}
\newcommand{\norm}[3][]{#1\|#2#1\|_{#3}}
\newcommand{\edual}[3][]{#1\langle\!\!#1\langle#2\,,\,#3#1\rangle\!\!#1\rangle}
\newcommand{\enorm}[2][]{#1|\!#1|\!#1|\,#2\,#1|\!#1|\!#1|}
\def\d#1{\,{\rm d}#1}
\newcommand{\diam}{{\rm diam}}
\newcommand{\set}[3][\big]{#1\{#2\,:\,#3#1\}}
\def\conv{{\rm conv}}
\def\refine{{\rm refine}}
\def\cond{{\rm cond}}
\def\Cmark{C_{\rm mark}}
\def\Cdrel{C_{\rm drl}}
\def\Crel{C_{\rm rel}}
\def\Ceff{C_{\rm eff}}
\def\Cstab{C_{\rm stb}}
\def\Cpcg{C_{\rm pcg}}
\def\Clin{C_{\rm lin}}
\def\Copt{C_{\rm opt}}
\def\qpcg{q_{\rm pcg}}
\def\qlin{q_{\rm lin}}
\def\reff#1#2{\!\stackrel{\eqref{#1}}{#2}\!}
\newcommand{\SX}{\ensuremath{\mathcal{X}}}
\newcommand{\EE}{\ensuremath{\mathcal{E}}}
\newcommand{\supp}{\ensuremath{\operatorname{supp}}}
\newcommand{\linhull}{\ensuremath{\operatorname{span}}}
\newcommand{\embed}{\ensuremath{\boldsymbol{I}}}
\newcommand{\D}{\ensuremath{\boldsymbol{D}}}
\newcommand{\Hmat}{\ensuremath{\boldsymbol{H}}}
\newcounter{statement}
\newenvironment{statement}[2][!]{%
\vskip3mm
\hrule
\hrule
\hrule
\vskip1mm
\noindent%
\refstepcounter{statement}%
\bf#2~\thestatement%
\ifthenelse{\equal{#1}{!}}{.\ }{~(#1).\ }%
\it%
}{%
\vskip1mm
\hrule
\hrule
\hrule
\vskip2mm
}
\newenvironment{theorem}[1][!]{\begin{statement}[#1]{Theorem}}{\end{statement}}
\newenvironment{lemma}[1][!]{\begin{statement}[#1]{Lemma}}{\end{statement}}
\newenvironment{proposition}[1][!]{\begin{statement}[#1]{Proposition}}{\end{statement}}
\newenvironment{corollary}[1][!]{\begin{statement}[#1]{Corollary}}{\end{statement}}
\newenvironment{remark}[1][!]{\begin{statement}[#1]{Remark}}{\end{statement}}
\newenvironment{algorithm}[1][!]{\begin{statement}[#1]{Algorithm}}{\end{statement}}
\renewcommand{\subsection}[1]{%
 \vskip2mm
 \refstepcounter{subsection}%
 {\bf\arabic{section}.\arabic{subsection}.~#1.~}%
}
\definecolor{gray}{rgb}{0.75,0.75,0.75}
\newenvironment{explain}{\begin{list}{\large\color{gray}$\bullet$}{%
\setlength{\labelsep}{2.3mm}%
\setlength{\labelwidth}{2mm}%
\setlength{\leftmargin}{5mm}%
}}{\end{list}}
\newcommand*\patchAmsMathEnvironmentForLineno[1]{%
  \expandafter\let\csname old#1\expandafter\endcsname\csname #1\endcsname
  \expandafter\let\csname oldend#1\expandafter\endcsname\csname end#1\endcsname
  \renewenvironment{#1}%
     {\linenomath\csname old#1\endcsname}%
     {\csname oldend#1\endcsname\endlinenomath}}%
\newcommand*\patchBothAmsMathEnvironmentsForLineno[1]{%
  \patchAmsMathEnvironmentForLineno{#1}%
  \patchAmsMathEnvironmentForLineno{#1*}}%
\title{Adaptive BEM with inexact PCG solver\\yields almost optimal computational costs}
\author{Thomas F\"uhrer}
\address{Pontificia Universidad de Cat\'olica, Facultad de Matem\'aticas, Vicku\~{n}a Mackenna 4860, Santiago, Chile}
\email{tofuhrer@mat.uc.cl}
\author{Alexander Haberl}
\author{Dirk Praetorius}
\author{Stefan Schimanko}
\address{TU Wien, Institute for Analysis and Scientific Computing, Wiedner Hauptstr.~8--10/E101/4, 1040 Wien, Austria}
\email{\{ alexander.haberl , dirk.praetorius \} @asc.tuwien.ac.at}
\email{stefan.schimanko@asc.tuwien.ac.at\quad\rm(corresponding author)}
\thanks{\textbf{Acknowledgement.} The authors thankfully acknowledge the support by the Austrian Science Fund (FWF) through grant
P27005 (AH, DP, SS) as well as grant F65  (DP) and by CONICYT through FONDECYT project P11170050 (TF)}
\subjclass[2010]{}
\keywords{}
\date{\today}
\begin{document}

\begin{abstract}
We consider the preconditioned conjugate gradient method (PCG) with optimal preconditioner in the frame of the boundary element method (BEM) for elliptic first-kind integral equations. Our adaptive algorithm steers the termination of PCG as well as the local mesh-refinement. 
Besides 
convergence with optimal algebraic rates, we also prove almost optimal computational complexity. In particular, we provide an additive Schwarz preconditioner which can be computed in linear complexity and which is optimal in the sense that the condition numbers of the preconditioned systems are uniformly bounded.
As model problem serves the 2D or 3D Laplace operator and the associated weakly-singular integral equation with energy space $\H^{-1/2}(\Gamma)$. The main results also hold for the hyper-singular  integral equation with energy space $H^{1/2}(\Gamma)$.
\end{abstract}

\maketitle

\vspace*{-10mm}
\section{Introduction}

\subsection{Model problem}
Let $\Omega \subset \R^d$ with $d=2,3$ be a bounded Lipschitz domain with polyhedral boundary $\partial\Omega$.
Let $\Gamma \subseteq \partial\Omega$ be a (relatively) open and connected subset.
Given $f : \Gamma \to \R$,  we seek the density $\PPhi^\exact\colon\Gamma\rightarrow\R$ of the weakly-singular integral equation
\begin{align}\label{eq:slp}
 (V \pphi^\exact)(x) := \int_\Gamma G(x-y) \pphi^\exact(y) \d{y} = f(x)
 \quad \text{for all } x \in \Gamma,
\end{align}
where $G(\cdot)$ denotes the fundamental solution of the Laplace operator, i.e.,
\begin{align}
 G(z) = - \frac{1}{2\pi} \, \log |z| \quad \text{for } d = 2
 \quad \text{resp.} \quad
 G(z) = \frac{1}{4\pi} \, \frac{1}{|z|} \quad \text{for } d = 3.
\end{align}
Given a triangulation $\TT_\coarse$ of $\Gamma$, we employ a lowest-order Galerkin boundary element method (BEM) to compute a $\TT_\coarse$-piecewise constant function $\PPhi_\coarse^\exact \in \PP^0(\TT_\coarse)$ such that
\begin{align}\label{eq:bem}
 \int_\Gamma (V \PPhi_\coarse^\exact)(x) \, \PPsi_\coarse(x) \d{x}
 = \int_\Gamma f(x) \, \PPsi_\coarse(x) \d{x} 
 \quad \text{for all } \PPsi_\coarse \in \PP^0(\TT_\coarse).
\end{align}
With the numbering $\TT_\coarse = \{T_1,\dots,T_N\}$, consider the standard basis $\set{\chi_{\coarse,j}}{j=1,\dots,N}$ of $\PP^0(\TT_\coarse)$ consisting of characteristic functions $\chi_{\coarse,j}$ of $T_j\in\TT_\coarse$. We make the ansatz  
\begin{align}
 \PPhi_\coarse^\exact = \sum_{k=1}^N \x_\coarse^\exact[k] \, \chi_{\coarse,k}
 \quad \text{with coefficient vector } 
 \x_\coarse^\exact=(\x_\coarse^\exact[1], \dots, \x_\coarse^\exact[N]) \in \R^N.
\end{align} 
Then, the Galerkin formulation~\eqref{eq:bem} is equivalent to the linear system
\begin{align}\label{eq:linearsystem}
 \A_\coarse \x_\coarse^\exact = \b_\coarse
 \text{\ \ with\ \ }
 \A_\coarse[j,k] := \int_{T_j} (V \chi_{\coarse,k})(x) \d{x},
 \quad
 \b_\coarse[j] := \int_{T_j} f(x) \d{x},
\end{align}
where the matrix $\A_\coarse \in \R^{N \times N}$ is positive definite and symmetric. For a given initial triangulation $\TT_0$, we consider an adaptive mesh-refinement strategy of the type
\begin{align}\label{eq:semr}
 \boxed{~solve~} 
 \longrightarrow
 \boxed{~estimate~} 
 \longrightarrow
 \boxed{~mark~} 
 \longrightarrow
 \boxed{~refine~} 
\end{align}
which generates a sequence $\TT_\ell$ of successively refined triangulations $\TT_\ell$ for all $\ell \in \N_0$. We note that the condition number of the Galerkin matrix $\A_\ell$ from~\eqref{eq:linearsystem} depends on the number of elements of $\TT_\ell$, as well as the minimal and maximal diameter. Therefore, the step $~solve~$ requires an efficient preconditioner as well as an appropriate iterative solver.

\subsection{State of the art}
In the last decade, the mathematical understanding of adaptive mesh-refinement has matured. We refer to~\cite{doerfler, mns, bdd, stevenson07, ckns, ffp14} for some milestones for adaptive finite element methods for second-order linear elliptic equations, \cite{gantumur, fkmp13, partOne, partTwo, invest} for adaptive BEM, and~\cite{axioms} for a general framework of rate-optimality of adaptive mesh-refining algorithms. The interplay between adaptive mesh-refinement, optimal convergence rates, and inexact solvers has been addressed and analyzed for adaptive FEM for linear problems in~\cite{stevenson07,MR3095916,MR3068564}, for eigenvalue problems in~\cite{MR2970733}, and recently also for strongly monotone nonlinearities in~\cite{ghps2017}.
In particular, all available results for adaptive BEM~\cite{gantumur, fkmp13, partOne, partTwo, invest} assume that the Galerkin system~\eqref{eq:linearsystem} is solved exactly. Instead, the present work 
analyzes an adaptive algorithm which steers both, the local mesh-refinement and the iterations of the PCG algorithm. 

In principle, it is known~\cite[Section~7]{axioms} that convergence and optimal convergence rates are preserved if the linear system is solved inexactly, but with sufficient accuracy.  The purpose of this work is to guarantee the latter by incorporating an appropriate stopping criterion for the PCG solver into the adaptive algorithm. Moreover, to prove that the proposed algorithm does not only lead to optimal algebraic convergence rates, but also to (almost) optimal computational costs, we  provide an appropriate symmetric and positive definite preconditioner $\P_\ell \in \R^{N\times N}$ such that
\begin{explain}
\item first, the matrix-vector products with $\P_\ell^{-1}$ can be computed at linear cost;
\item second, the system matrix $\P^{-1/2}_\ell \A_\ell \P_\ell^{-1/2}$ of the preconditioned linear system
\begin{align}\label{eq:linearsystem:pcg}
 \P^{-1/2}_\ell \A_\ell \P_\ell^{-1/2} \widetilde \x_\ell^\exact = \P_\ell^{-1/2} \b_\ell
\end{align} 
has a uniformly bounded condition number which is independent of $\TT_\ell$.
\end{explain}
Then, $\x_\ell^\exact = \P_\ell^{-1/2} \widetilde \x_\ell^\exact$ solves the original system~\eqref{eq:linearsystem}.
To that end, we exploit the multilevel structure of adaptively generated meshes in the framework of adaptive Schwarz methods. For hyper-singular integral equations, such a multilevel additive Schwarz preconditioner has been proposed and analyzed in~\cite{MR3612925,fmpr15} for $d = 2,3$ and for weakly-singular integral equations in~\cite{MR3634453} for $d = 2$. In particular, the present work closes this gap by analyzing an optimal additive Schwarz preconditioner for weakly-singular integral equations for $d = 3$. We note that the proofs of~\cite{MR3612925,MR3634453} do not transfer to weakly-singular integral equations for $d=3$. Instead, we build on recent results for finite element discretizations~\cite{hiptwuzheng2012,MR3522965} which are then transferred to the present BEM setting by use of an abstract concept from~\cite{oswald99}.

\subsection{Outline and main results}
Section~\ref{section:preliminaries} introduces the functional analytic framework and fixes the necessary notation. 
Section~\ref{section:main_results} states our main results. In Section~\ref{section:oasp}, we define a local multilevel additive Schwarz preconditioner~\eqref{eq:defPrec} for a sequence of locally refined meshes. Theorem~\ref{thm:precond} states that the $\ell_2$-condition number of the preconditioned systems is uniformly bounded for all these meshes, i.e., the preconditioner is optimal. 
In Section~\ref{section:main_results_algorithm}, we first state our adaptive algorithm which steers the local mesh-refinement as well as the stopping of the PCG iteration (Algorithm~\ref{algorithm}). Theorem~\ref{theorem:algorithm} proves 
\begin{explain}
\item
that the overall error in the energy norm can be controlled {\sl a~posteriori}, 
\item
that the \emph{quasi-error} (which consists of energy norm error plus error estimator) is linearly convergent in each step of the adaptive algorithm (i.e., independent of whether the algorithm decides for local mesh-refinement or for one step of the PCG iteration), 
\item
that the quasi-error even decays with optimal rate (i.e., with each possible algebraic rate) with respect to the degrees of freedom, i.e., Algorithm~\ref{algorithm} is \emph{rate optimal} in the sense of, e.g.,~\cite{stevenson07,ckns,fkmp13,axioms}.
\end{explain}
Finally, Section~\ref{section:main:complexity} considers the computational costs.
Under realistic assumptions on the treatment of the arising discrete integral operators, Corollary~\ref{corollary:algorithm} states that the quasi-error converges at almost optimal rate (i.e., with rate $s-\varepsilon$ for any $\varepsilon>0$ if rate $s>0$ is possible for the exact Galerkin solution) with respect to computational costs, i.e., Algorithm~\ref{algorithm} requires \emph{almost optimal computational time}. Section~\ref{section:numerics} underpins our theoretical findings by some 2D and 3D experiments. The proof of Theorem~\ref{thm:precond} is given in Section~\ref{proof:precond}, the proofs of Theorem~\ref{theorem:algorithm} and Corollary~\ref{corollary:algorithm} are given in Section~\ref{proof:algorithm}. The final Section~\ref{section:hypsing} shows that our main results also apply to the hyper-singular integral equation.

\section{Preliminaries and notation}
\label{section:preliminaries}

\subsection{Functional analytic setting}
We briefly recall the most important facts and refer to~\cite{mclean} for further details and proofs.
With the Sobolev space $H^\alpha(\partial\Omega)$ defined as in~\cite{mclean} for $0 \le \alpha \le 1$, let $H^\alpha(\Gamma) := \set{v|_\Gamma}{v \in H^\alpha(\partial\Omega)}$ be associated with the natural quotient norm. Let $\H^{-\alpha}(\Gamma)$ be the dual space of $H^\alpha(\Gamma)$ with respect to the extended $L^2(\Gamma)$ scalar product $\dual{\psi}{f} = \int_\Gamma \psi(x) \, f(x) \d{x}$. Then, the single-layer potential $V$ from~\eqref{eq:slp} gives rise to a bounded linear operator $V:\H^{-1/2+s}(\Gamma) \to H^{1/2+s}(\Gamma)$ for all $-1/2 \le s \le 1/2$ which is even an isomorphism for $-1/2 < s < 1/2$. For $d=2$, the latter requires $\diam(\Omega) < 1$ which can always be ensured by scaling of $\Omega$. For $s=0$, the operator $V$ is even symmetric and elliptic, i.e.,
\begin{align}
 \edual{\pphi}{\psi} := \int_\Gamma (V\pphi)(x) \, \psi(x) \d{x}\qquad\textrm{for all $\pphi,\psi\in \H^{-1/2}(\Gamma)$}
\end{align}
defines a scalar product and $\enorm{\pphi}^2 := \edual{\pphi}{\pphi}$ is an equivalent norm on $\H^{-1/2}(\Gamma)$. For a given right-hand side $f \in H^{1/2}(\Gamma)$, the weakly-singular integral equation~\eqref{eq:slp} can thus equivalently be reformulated as 
\begin{align}\label{eq:weakform}
 \edual{\pphi^\exact}{\psi} = \dual{f}{\psi} 
 \quad \text{ for all } \psi \in \H^{-1/2}(\Gamma).
\end{align}
In particular, the Lax--Milgram theorem proves existence and uniqueness of the solution $\pphi^\exact \in \H^{-1/2}(\Gamma)$ to~\eqref{eq:weakform}.

\subsection{Boundary element method (BEM)}
Given a mesh $\TT_\coarse$ of $\Gamma$, let 
\begin{align}
 \PP^0(\TT_\coarse) 
 := \set{\PPsi_\coarse : \Gamma \to \R}{\forall T \in \TT_\coarse \quad \PPsi_\coarse|_T \text{ is constant}} 
\end{align}
be the space of $\TT_\coarse$-piecewise constant functions. Note that $\PP^0(\TT_\coarse) \subset L^2(\Gamma) \subset \H^{-1/2}(\Gamma)$. The Galerkin formulation~\eqref{eq:bem} can be reformulated as
\begin{align}\label{eq:galerkin}
 \edual{\PPhi_\coarse^\exact}{\PPsi_\coarse} = \dual{f}{\PPsi_\coarse}
 \quad \text{for all } \PPsi_\coarse \in \PP^0(\TT_\coarse).
\end{align}
Therefore, the Lax--Milgram theorem proves existence and uniqueness of the discrete solution $\PPhi_\coarse^\exact \in \PP^0(\TT_\coarse)$.

\begin{figure}[t]
 \centering
 \includegraphics[width=35mm]{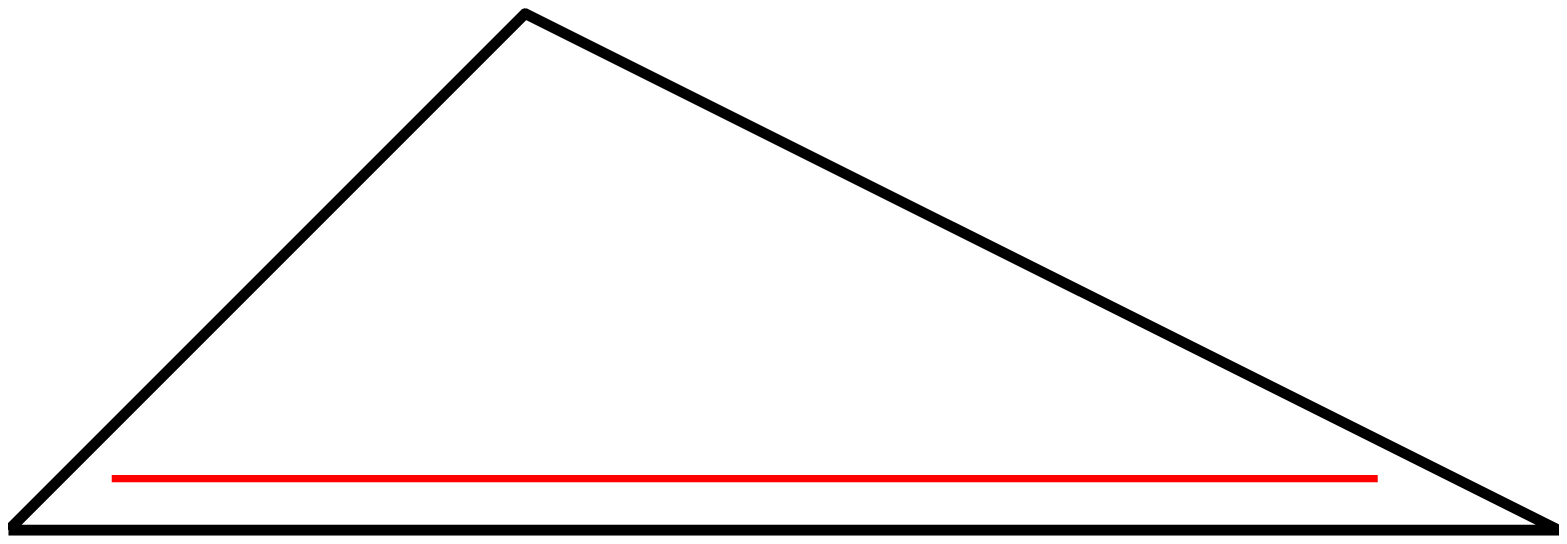} \quad
 \includegraphics[width=35mm]{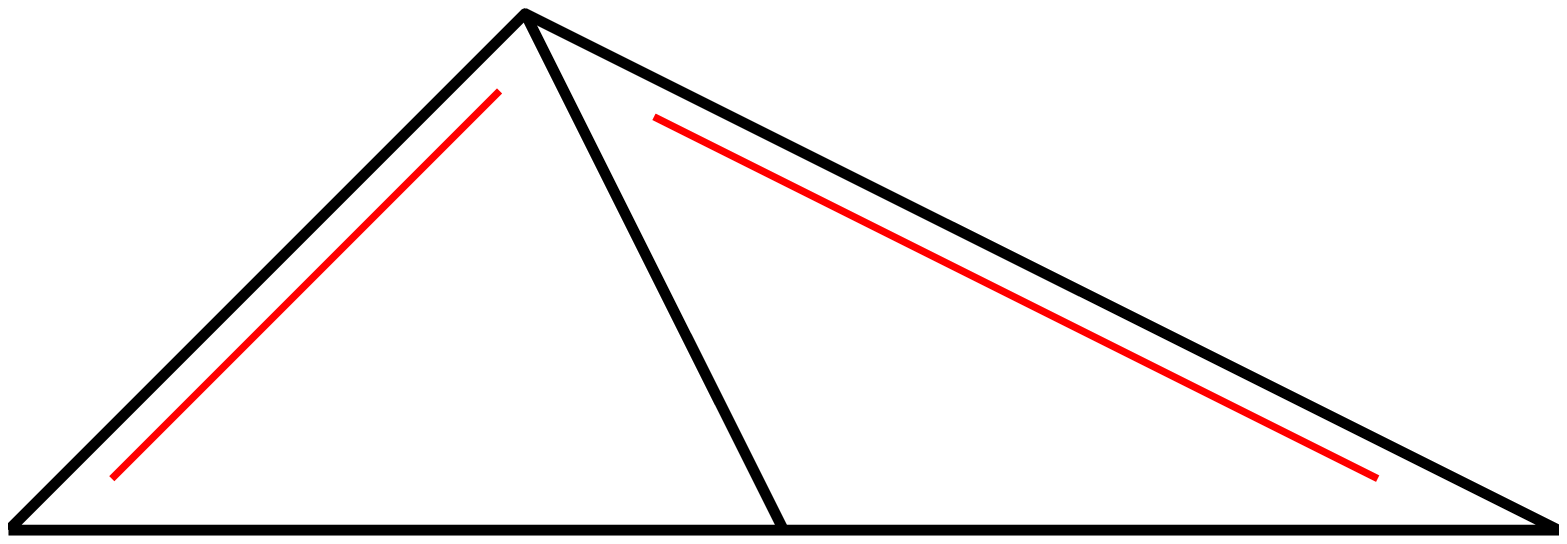} \quad
 \includegraphics[width=35mm]{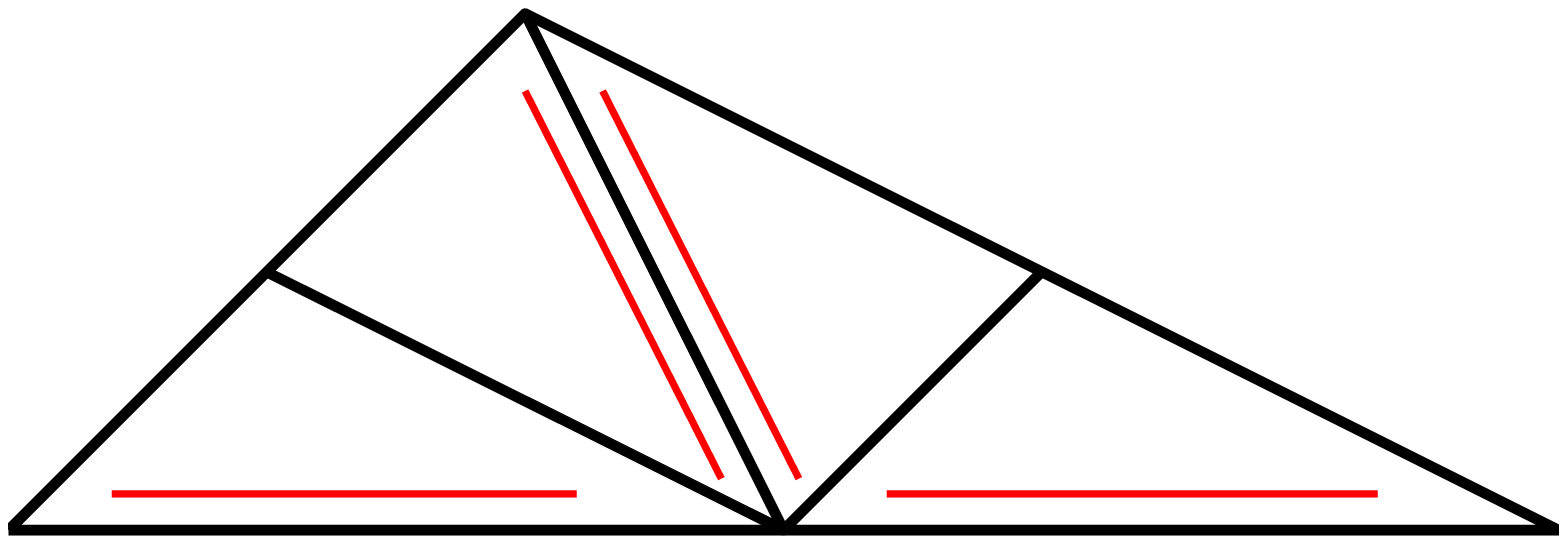} \quad
 \caption{\small For newest vertex bisection (NVB) in 2D, each triangle $T\in\TT$ has one \emph{reference edge}, indicated by the double line (left). 
 Bisection of $T$ is achieved by halving the reference edge (middle). The reference edges of the sons are always opposite to the new vertex. Recursive application of this refinement rule leads to conforming triangulations.}
  \label{fig:nvb}
\end{figure}

\subsection{Mesh-refinement for 2D BEM}\label{section:refined2D}
For $d = 2$, a mesh $\TT_\coarse$ of $\Gamma$ is a partition into non-degenerate compact line segments. It is
called \emph{$\gamma$-shape regular}, if
\begin{align}\label{eq:shaperegular2d}
 \max\set{h_T / h_{T'}}{T,T' \in \TT_\coarse \text{ with } T\cap T' \neq \emptyset} 
 \le \gamma.
\end{align}
Here, $h_T:=\diam(T) > 0$ denotes the Euclidean diameter of $T$, i.e., the length of the line segment.

We employ the extended bisection algorithm from~\cite{cmam}.
For a mesh $\TT_\coarse$ and $\MM_\coarse \subseteq \TT_\coarse$, let $\TT_\fine := \refine(\TT_\coarse, \MM_\coarse)$ be the coarsest mesh such that all marked elements $T\in\MM_\coarse$ have been refined, i.e., $\MM_\coarse \subseteq \TT_\coarse \backslash \TT_\fine$.
We write $\TT_\fine \in \refine(\TT_\coarse)$, if there exists $n\in\N_0$, conforming triangulations $\TT_0,\dots,\TT_n$ and corresponding sets of marked elements $\MM_j\subseteq\TT_j$ such that
\begin{explain}
\item $\TT_\coarse = \TT_0$,
\item $\TT_{j+1} = \refine(\TT_j,\MM_j)$ for all $j=0,\dots,n-1$,
\item $\TT_\fine  = \TT_n$,
\end{explain}
i.e., $\TT_\fine$ is obtained from $\TT_\coarse$ by finitely many steps of refinement. Note that the bisection algorithm from~\cite{cmam} guarantees, in particular, that all $\TT_\fine \in \refine(\TT_\coarse)$ are uniformly $\gamma$-shape regular, where $\gamma$ depends only on $\TT_\coarse$.

\subsection{Mesh-refinement for 3D BEM}\label{section:refined3D}
For $d = 3$, a mesh $\TT_\coarse$ of $\Gamma$ is a conforming triangulation into non-degenerate compact surface triangles. In particular, we avoid hanging nodes. To ease the presentation, we suppose that the elements $T \in \TT_\coarse$ are flat. The triangulation is called \emph{$\gamma$-shape regular}, if
\begin{align}\label{eq:shaperegular}
 \max_{T\in\TT_\coarse} \frac{\diam(T)}{h_T} \le \gamma.
\end{align}
Here, $\diam(T)$ denotes the Euclidean diameter of $T$ and $h_T:=|T|^{1/2}$ with $|T|$ being the two-dimensional surface measure. Note that $\gamma$-shape regularity implies that $h_T \le \diam(T) \le \gamma \, h_T$ and hence excludes anisotropic elements.

For 3D BEM, we employ 2D newest vertex bisection (NVB) to refine triangulations locally; see~\cite{stevenson,kpp} for details on the refinement algorithm and Figure~\ref{fig:nvb} for an illustration. 
For a mesh $\TT_\coarse$ and $\MM_\coarse \subseteq \TT_\coarse$, we employ the same notation  
$\TT_\fine := \refine(\TT_\coarse, \MM_\coarse)$ resp.\ $\TT_\fine \in \refine(\TT_\coarse)$ as for $d = 2$.

\subsection{\textsl{A~posteriori} BEM error control}
For $\PPsi_\coarse \in \PP^0(\TT_\coarse)$ and $\UU_\coarse \subseteq \TT_\coarse$, define
\begin{align}\label{eq:estimator}
 \eta_\coarse(\UU_\coarse, \PPsi_\coarse)^2
 := \sum_{T\in\UU_\coarse} \eta_\coarse(T, \PPsi_\coarse)^2,
 \quad \text{where} \quad
 \eta_\coarse(T, \PPsi_\coarse)^2 
 := h_T \, \norm{\nabla_\Gamma(f-V\PPsi_\coarse)}{L^2(T)}^2.
\end{align}
Here $\nabla_\Gamma(\cdot)$ denotes the arclength derivative for $d = 2$ resp.\ the surface gradient for $d = 3$.
To abbreviate notation, let $\eta_\coarse(\PPsi_\coarse) := \eta_\coarse(\TT_\coarse, \PPsi_\coarse)$.
If $\PPsi_\coarse = \PPhi_\coarse^\exact$ is the discrete solution to~\eqref{eq:galerkin}, then there holds the reliability estimate (i.e., the global upper bound)
\begin{align}\label{eq:prelim:reliability}
 \enorm{\pphi^\exact - \PPhi_\coarse^\exact} \le \Crel \, \eta_\coarse(\PPhi_\coarse^\exact),
\end{align}
where $\Crel > 0$ depends only on $\Gamma$ and $\gamma$-shape regularity of $\TT_\coarse$; see~\cite{cs95, cc97} for $d = 2$ resp.~\cite{cms01} for $d = 3$. Provided that $\PPhi^\exact\in L^2(\Gamma)$, the following 
weak efficiency
\begin{align}\label{eq:weak_eff}
\enorm{\PPhi^\exact-\PPhi_\coarse^\exact}+\eta_\coarse(\PPhi_\coarse^\exact)\leq\Ceff\,\norm{h_\coarse^{1/2}\,(\PPhi^\exact-\PPhi_\coarse^\exact)}{L^2(\Gamma)}
\end{align}
has recently been proved in \cite{invest}, where $\Ceff>0$ depends only on $\Gamma$ and $\gamma$-shape regulartiy of $\TT_\coarse$. We note that the weighted $L^2$-norm on the right-hand side of \eqref{eq:weak_eff} is only slightly stronger than $\enorm{\cdot}\simeq\norm{\cdot}{\H^{-1/2}(\Gamma)}$, so that one empirically observes $\eta_\coarse(\PPhi_\coarse^\exact)\lesssim\enorm{\PPhi-\PPhi_\coarse^\exact}$ in practice, cf. \cite{cs95,cc97,cms01}.
In certain situations (e.g., weakly-singular integral formulation of the interior 2D Dirichlet problem), one can rigorously prove the latter (strong) efficiency estimate up to higher-order data oscillations; see~\cite{cmam}.

\subsection{Preconditioned conjugate gradient method (PCG)}
Suppose that $\P_\coarse, \A_\coarse \in \R^{N \times N}$ are symmetric and positive definite matrices.
Given $\b_\coarse \in \R^N$ and an initial guess $\x_{\coarse0}$, PCG (see~\cite[Algorithm~11.5.1]{matcomp}) aims to approximate the solution
$\x_\coarse^\exact \in \R^N$ to~\eqref{eq:linearsystem}.
We note that each step of PCG has the following computational costs:
\begin{explain}
\item $\OO(N)$ cost for vector operations (e.g., assignment, addition, scalar product),
\item computation of \emph{one} matrix-vector product with $\A_\coarse$,
\item computation of \emph{one} matrix-vector product with $\P_\coarse^{-1}$.
\end{explain}
Let $\widetilde\x_\coarse^\exact \in \R^N$ be the solution to~\eqref{eq:linearsystem:pcg} and recall that $\x_\coarse^\exact = \P_\coarse^{-1/2} \widetilde\x_\coarse^\exact$. We note that PCG formally applies the conjugate gradient method (CG, see~\cite[Algorithm~11.3.2]{matcomp}) for the matrix $\widetilde\A_\coarse := \P_\coarse^{-1/2} \A_\coarse \P_\coarse^{-1/2}$ and the right-hand side $\widetilde\b_\coarse = \P_\coarse^{-1/2} \b_\coarse$. The iterates $\x_{\coarse k} \in \R^N$ of PCG (applied to $\P_\coarse$, $\A_\coarse$, $\b_\coarse$, and the initial guess $\x_{\coarse 0}$) and the iterates $\widetilde\x_{\coarse k}$ of CG (applied to $\widetilde\A_\coarse$, $\widetilde\b_\coarse$, and the initial guess $\widetilde\x_{\coarse 0} := \P_\coarse^{1/2} \x_{\bullet 0}$) are formally linked by
\begin{align*}
 \x_{\coarse k} = \P_\coarse^{-1/2} \widetilde\x_{\coarse k};
\end{align*}
see~\cite[Section~11.5]{matcomp}. Moreover, direct computation proves that
\begin{align}\label{eq:normequi1}
 \norm{\widetilde\y_\coarse}{\widetilde\A_\coarse}^2 := \widetilde\y_\coarse \cdot \widetilde\A_\coarse \widetilde\y_\coarse = \y _\coarse\cdot \A_\coarse \y_\coarse
 =: \norm{\y_\coarse}{\A_\coarse}^2
 \quad \text{for all $\widetilde \y_\coarse \in \R^N$ and $\y_\coarse = \P_\coarse^{-1/2} \widetilde \y_\coarse$}.
\end{align}
Consequently,~\cite[Theorem~11.3.3]{matcomp} for CG (applied to $\widetilde\A_\coarse$, $\widetilde\b_\coarse$,  $\widetilde\x_{\coarse0}$) yields the following lemma for PCG (which follows from the implicit steepest decent approach of CG).  

\begin{lemma}\label{lemma:pcg}
Let $\A_\coarse, \P_\coarse \in \R^{N \times N}$ be symmetric and positive definite, $\b_\coarse\in \R^N$, $\x_\coarse^\exact := \A_\coarse^{-1} \b_\coarse$, and $\x_{\coarse 0} \in \R^N$. 
Suppose the $\ell_2$-condition number estimate
\begin{align}\label{eq1:pcg}
 \cond_2(\P_\coarse^{-1/2} \A_\coarse \P_\coarse^{-1/2}) \le \Cpcg.
\end{align}
Then, the iterates $\x_{\bullet k}$ of the PCG algorithm satisfy the contraction property
\begin{align}\label{eq2:pcg}
 \norm{\x_\coarse^\exact - \x_{\coarse(k+1)}}{\A_\coarse} 
 \le \qpcg \, \norm{\x_\coarse^\exact - \x_{\coarse k}}{\A_\coarse}
 \quad\text{for all } k \in \N_0,
\end{align}
where $\qpcg := (1-1/\Cpcg)^{1/2} < 1$.\qed
\end{lemma}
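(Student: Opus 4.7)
The plan is to reduce everything to the classical energy-norm contraction for the (unpreconditioned) CG method applied to the transformed system, and then to pull the estimate back through the substitution $\widetilde\x_{\coarse k}=\P_\coarse^{1/2}\x_{\coarse k}$. Since the statement has been set up so that PCG on $(\A_\coarse,\b_\coarse,\x_{\coarse 0})$ produces exactly the iterates $\x_{\coarse k}=\P_\coarse^{-1/2}\widetilde\x_{\coarse k}$, where $\widetilde\x_{\coarse k}$ are the CG iterates for $(\widetilde\A_\coarse,\widetilde\b_\coarse,\widetilde\x_{\coarse 0})$ with $\widetilde\A_\coarse:=\P_\coarse^{-1/2}\A_\coarse\P_\coarse^{-1/2}$ and $\widetilde\b_\coarse:=\P_\coarse^{-1/2}\b_\coarse$, there is essentially nothing to invent: the lemma will fall out once the appropriate pieces are lined up.

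Concretely, I would proceed in three short steps. First, observe that $\widetilde\A_\coarse$ is symmetric and positive definite (being congruent to $\A_\coarse$ via $\P_\coarse^{-1/2}$), so the hypothesis~\eqref{eq1:pcg} gives $\cond_2(\widetilde\A_\coarse)\le\Cpcg$. Second, apply the standard steepest-descent contraction for CG (the variant cited as \cite[Theorem~11.3.3]{matcomp}) to the iterates $\widetilde\x_{\coarse k}$ of CG for $\widetilde\A_\coarse\widetilde\x=\widetilde\b_\coarse$, with exact solution $\widetilde\x_\coarse^\exact=\P_\coarse^{1/2}\x_\coarse^\exact$. This yields
\begin{align*}
 \norm{\widetilde\x_\coarse^\exact-\widetilde\x_{\coarse(k+1)}}{\widetilde\A_\coarse}
 \le \bigl(1-1/\cond_2(\widetilde\A_\coarse)\bigr)^{1/2}\,\norm{\widetilde\x_\coarse^\exact-\widetilde\x_{\coarse k}}{\widetilde\A_\coarse}
 \le \qpcg\,\norm{\widetilde\x_\coarse^\exact-\widetilde\x_{\coarse k}}{\widetilde\A_\coarse},
\end{align*}
by monotonicity of $\kappa\mapsto(1-1/\kappa)^{1/2}$ and the definition of $\qpcg$.

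Finally, I would translate this back to the original variables using the identity~\eqref{eq:normequi1}: applied to $\widetilde\y_\coarse:=\widetilde\x_\coarse^\exact-\widetilde\x_{\coarse k}$ with $\y_\coarse=\P_\coarse^{-1/2}\widetilde\y_\coarse=\x_\coarse^\exact-\x_{\coarse k}$, it gives $\norm{\widetilde\x_\coarse^\exact-\widetilde\x_{\coarse k}}{\widetilde\A_\coarse}=\norm{\x_\coarse^\exact-\x_{\coarse k}}{\A_\coarse}$, and similarly at index $k+1$. Inserting this into the contraction inequality above delivers~\eqref{eq2:pcg} verbatim.

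I do not foresee a genuine obstacle: the lemma is essentially a dictionary translation between CG on $\widetilde\A_\coarse$ and PCG on $(\P_\coarse,\A_\coarse)$. The only point that requires a sentence of care is that~\cite[Thm.~11.3.3]{matcomp} is usually stated with the suboptimal steepest-descent-type rate $(1-1/\kappa)^{1/2}$ rather than the sharper Chebyshev bound $(\sqrt\kappa-1)/(\sqrt\kappa+1)$; this is exactly the rate the lemma claims, so the weaker form is precisely what is needed. One should also note in passing that both $\cond_2(\widetilde\A_\coarse)$ and the PCG iterates are independent of the choice of square root $\P_\coarse^{1/2}$, so that no ambiguity arises from the symmetric factorisation used only for analysis.
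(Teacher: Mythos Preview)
Your proposal is correct and matches the paper's own justification essentially verbatim: the paper does not give a standalone proof but simply notes that the lemma follows from \cite[Theorem~11.3.3]{matcomp} for CG applied to $(\widetilde\A_\coarse,\widetilde\b_\coarse,\widetilde\x_{\coarse0})$ together with the norm identity~\eqref{eq:normequi1} and the correspondence $\x_{\coarse k}=\P_\coarse^{-1/2}\widetilde\x_{\coarse k}$. Your three-step reduction is precisely this argument spelled out.
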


If the matrix $\A_\coarse \in \R^{N \times N}$ stems from the Galerkin discretization~\eqref{eq:linearsystem} for $\TT_\coarse = \{ T_1, \dots, T_N \}$, there is a one-to-one correspondence of vectors $\y_\coarse \in \R^N$ and discrete functions $\PPsi_\coarse \in \PP^0(\TT_\coarse)$ via $\PPsi_\coarse = \sum_{j=1}^N \y_\coarse[j] \, \chi_{\coarse,j}$. Let $\PPhi_{\coarse k} \in \PP^0(\TT_\coarse)$ denote the discrete function corresponding to the PCG iterate $\x_{\coarse k} \in \R^N$, while the Galerkin solution $\PPhi_\coarse^\exact \in \PP^0(\TT_\coarse)$ of~\eqref{eq:galerkin} corresponds to $\x_\coarse^\exact = \A_\coarse^{-1} \b_\coarse$. We note the elementary identity 
\begin{align}\label{eq:normequi2}
 \enorm{\PPhi_\coarse^\exact - \PPhi_{\coarse k}}^2 
 = (\x_\coarse^\exact - \x_{\coarse k})\cdot \A_\coarse (\x_\coarse^\exact - \x_{\coarse k})
 = \norm{\x_\coarse^\exact - \x_{\coarse k}}{\A_\coarse}^2.
\end{align}

\subsection{Optimal preconditioners}
\label{section:optimal_preconditioners}%
We say that $\P_\coarse$ is an \emph{optimal preconditioner}, if $\Cpcg \ge 1$ in the $\ell_2$-condition number estimate~\eqref{eq1:pcg} depends only on $\gamma$-shape regularity of $\TT_\coarse$ and the initial mesh $\TT_0$ (and is hence essentially independent of the mesh $\TT_\coarse$).

\section{Main results}
\label{section:main_results}

\subsection{Optimal additive Schwarz preconditioner}
\label{section:oasp}
In this work, we consider multilevel additive Schwarz preconditioners that build on the adaptive mesh-hierarchy. 

Let $\EE_\bullet$ denote the set of all nodes ($d=2$) resp.\ edges ($d=3$) of the mesh $\TT_\coarse$
which do not belong to the relative boundary $\partial\Gamma$.
Only for $\Gamma=\partial\Omega$, $\EE_\bullet$ contains all nodes resp.\ edges of $\TT_\coarse$.
For $E\in\EE_\bullet$, let
$T^\pm\in\TT_\bullet$ denote the two unique elements with $T^+\cap T^- = E$.
We define the Haar-type function $\varphi_{\bullet,E}\in\PP^0(\TT_\bullet)$ (associated to $E\in\EE_\bullet$) by
\begin{align}\label{eq:psi}
  \varphi_{\bullet,E}|_T := 
  \begin{cases}
    \pm\frac{|E|}{|T^\pm|} & \text{for } T\in \{T^+,T^-\}, \\
    0 & \text{else},
  \end{cases}
\end{align}
where $|E| := 1$ for $d=2$ and $|E| :=\diam(E)$ for $d=3$.
Note that 
\begin{align}\label{eq:P0*}
 \varphi_{\bullet,E} \in \PP_*^0(\TT_\bullet) :=\set[\Big]{\psi\in\PP^0(\TT_\bullet)}{\int_\Gamma \psi \d{x} = 0}.
\end{align}
For $d=3$, we additionally suppose that the orientation of each edge $E$ is arbitrary but fixed.
We choose $T^+ \in \TT_\coarse$ such that $\partial T^+$ and $E \subset \partial T^+$ have the same orientation.

Given a mesh $\TT_0$, suppose that $\TT_\ell$ is a sequence of locally refined meshes, i.e., for all $\ell \in \N_0$, there exists a set $\MM_\ell \subseteq \TT_\ell$ such that $\TT_{\ell+1} = \refine(\TT_\ell,\MM_\ell)$. 
Then, define
\begin{align*}
  \EE_\ell^\star := \EE_\ell \backslash \EE_{\ell-1} \cup
  \set{E\in\EE_\ell}{\supp(\varphi_{\ell,E})\subsetneqq \supp(\varphi_{\ell-1,E})}
  \quad\textrm{for all $\ell\geq1$,}
\end{align*}
which consist of new (interior) nodes/edges plus some of their neighbours. We note the following subspace decomposition which is, in general, \emph{not} direct.

\begin{lemma}
With $\SX_\bullet := \PP^0(\TT_\bullet)$ and $\SX_{\bullet,E} := \linhull\{\varphi_{\bullet,E}\}$,
it holds that 
  \begin{align}\label{eq:decomp}
    \SX_L = \SX_0 + \sum_{\ell=1}^L \sum_{E\in\EE_\ell^\star} \SX_{\ell,E}
    \quad \text{for all } L \in \N_0. 
    \qquad\qed
  \end{align}
\end{lemma}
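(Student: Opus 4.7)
The inclusion ``$\supseteq$'' is immediate: refinement yields nested spaces $\SX_0 \subseteq \SX_1 \subseteq \dots \subseteq \SX_L$, and for every $\ell\le L$ and every $E\in\EE_\ell$ one has $\varphi_{\ell,E}\in\SX_\ell \subseteq \SX_L$. The non-trivial direction ``$\subseteq$'' I would prove by induction on $L\in\N_0$. The base case $L=0$ is trivial. For the induction step it suffices to establish the stronger \emph{one-level} identity
\begin{align*}
 \SX_\ell \;=\; \SX_{\ell-1} \;+\; \sum_{E\in\EE_\ell\setminus\EE_{\ell-1}}\SX_{\ell,E} \qquad(\ell\ge 1),
\end{align*}
which uses only the genuinely new edges. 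The extra edges in the second half of the definition of $\EE_\ell^\star$ (those whose Haar-support has strictly shrunk) are then redundant for spanning and merely render the overall sum non-direct, in accordance with the comment preceding the statement.

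To prove the one-level identity, I would take $\PPsi\in\SX_\ell$ and let $\PPsi_{\ell-1}\in\SX_{\ell-1}$ be its elementwise $L^2$-average, $\PPsi_{\ell-1}|_T := |T|^{-1}\int_T \PPsi\d{x}$ for each $T\in\TT_{\ell-1}$. Then $\PPsi-\PPsi_{\ell-1}\in\SX_\ell$ vanishes on every unrefined element $T\in\TT_{\ell-1}\cap\TT_\ell$ and has zero integral over every refined element $T\in\TT_{\ell-1}\setminus\TT_\ell$. It therefore suffices to show, for each such refined $T$, that the space of $\TT_\ell$-piecewise constant functions supported in $T$ with zero mean over $T$ is spanned by $\{\varphi_{\ell,E} : E\in\EE_\ell\setminus\EE_{\ell-1},\ E\subset\overline T\}$.

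This local claim is the main technical point. I would prove it by a secondary induction on the number of bisections needed to pass from $T$ (viewed as a single element of $\TT_{\ell-1}$) to its sub-triangulation in $\TT_\ell$. A single bisection of $T$ produces sons $T^+,T^-$ and one new interior edge $E_0\in\EE_\ell\setminus\EE_{\ell-1}$; directly from~\eqref{eq:psi}, $\varphi_{\ell,E_0}$ is proportional to $\chi_{T^+}/|T^+|-\chi_{T^-}/|T^-|$, has zero mean, and spans the one-dimensional space of mean-zero functions on $T^+\cup T^-$. For multiple bisections, I would single out the first bisection of $T$, use a suitable multiple of the corresponding $\varphi_{\ell,E_0}$ to absorb the difference of the two partial means $|T^\pm|^{-1}\int_{T^\pm}\PPsi\d{x}$, and then apply the inductive hypothesis independently on the sub-triangulations of $T^+$ and $T^-$. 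The only delicate point is to verify that every interior edge arising along this binary tree of bisections genuinely lies in $\EE_\ell\setminus\EE_{\ell-1}$ and that $\varphi_{\ell,\cdot}$ is formed with respect to its sons in $\TT_\ell$ (so its support is exactly those two sons rather than some ancestor pair); this is immediate since each such edge is created during the passage $\TT_{\ell-1}\to\TT_\ell$ and persists in $\TT_\ell$. Summing the elementwise contributions over all refined $T\in\TT_{\ell-1}$ reassembles $\PPsi-\PPsi_{\ell-1}$ in the required form and closes the outer induction.
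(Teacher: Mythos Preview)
The paper states this lemma without proof, so there is no argument to compare against; I comment only on the proposal itself.

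Your overall strategy is the natural one and is correct: induction on $L$, reduction to a one-level identity via the $L^2$-projection onto $\SX_{\ell-1}$, and localisation to each refined element $T\in\TT_{\ell-1}\setminus\TT_\ell$. The inner induction, however, has a genuine gap for $d=3$. You single out the \emph{first} bisection edge $E_0$ of $T$ and use $\varphi_{\ell,E_0}$ to balance the two partial means; but $\varphi_{\ell,E_0}$ is only defined if $E_0\in\EE_\ell$, and after three NVB bisections of $T$ the edge $E_0$ can itself become the reference edge of a grandchild and hence be bisected. Your claim that each intermediate bisection edge ``persists in $\TT_\ell$'' is therefore false in general. (For $d=2$ the claim is fine, since there $\EE_\ell$ consists of nodes, which are never split.)

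The cleanest repair avoids the bisection-tree induction altogether. For a refined $T\in\TT_{\ell-1}\setminus\TT_\ell$, consider the dual graph $G_T$ whose vertices are the subelements of $\TT_\ell|_T$ and whose edges are those $E\in\EE_\ell$ lying in the interior of $T$ (all of which belong to $\EE_\ell\setminus\EE_{\ell-1}$). Since $T$ is connected, $G_T$ is connected. The Haar functions $\varphi_{\ell,E}$ for edges $E$ of $G_T$ are, up to the scalar $|E|$, precisely the signed vertex-incidence vectors of $G_T$ in the weighted inner product $(\chi_{T'},\chi_{T''})\mapsto |T'|\,\delta_{T'T''}$; for a connected graph these span the orthogonal complement of the constants, i.e., the mean-zero $\TT_\ell|_T$-piecewise constants. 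This yields the local claim and hence the one-level identity, after which your outer induction goes through unchanged. Your remark that the ``shrunken-support'' edges in the second half of $\EE_\ell^\star$ are redundant for spanning is correct.
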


Additive Schwarz preconditioners are based on (not necessarily direct) subspace decompositions.
Following the standard theory
(see, e.g.,~\cite[Chapter~2]{toswid}), 
\eqref{eq:decomp} yields a (local multilevel) preconditioner.
To provide its matrix formulation, let $\embed_{k,\ell}\in\R^{\#\TT_\ell\times\#\TT_k}$ be the 
matrix 
representation of the canonical embedding $\PP^0(\TT_k) \hookrightarrow \PP^0(\TT_\ell)$ for $k<\ell$, i.e.,
\begin{align*}
\sum_{i=1}^{\#\TT_k}\x_k[i]\,\chi_{k,i}=\sum_{i=1}^{\#\TT_\ell}\x_\ell[i]\,\chi_{\ell,i}\quad\textrm{for all $\x_k\in\R^{\#\TT_k}$ and $\x_\ell:=\embed_{k,\ell}\x_k\in\R^{\#\TT_{\ell}}$.}
\end{align*}
Let $\Hmat_\ell \in \R^{\#\TT_\ell\times \#\EE_\ell}$ denote the matrix that represents Haar-type functions,
i.e., 
\begin{align*}
\varphi_{\ell,E_j}=\sum_{i=1}^{\#\TT_\ell}\Hmat_\ell[i,j]\chi_{\ell,i}\quad\textrm{for all }E_j\in\EE_\ell.
\end{align*}
Since only two coefficients per column are non-zero, 
$\Hmat_\ell$ is sparse,  while $\embed_{k,\ell}$ is non-sparse in general. Finally, define the (non-invertible) diagonal matrix $\D_\ell \in \R^{\#\EE_\ell\times\#\EE_\ell}$ by
\begin{align*}
  (\D_\ell)_{jk} := 
  \begin{cases}
    \enorm{\varphi_{\ell,E_j}}^{-2} & E_j\in \EE_\ell^\star\textrm{ and }j=k, \\
    0 & \text{else}.
  \end{cases}
\end{align*}
Then, the matrix representation of the preconditioner associated to 
\eqref{eq:decomp} reads
\begin{align}\label{eq:defPrec}
  \P_L^{-1} := \embed_{0,L}\A_0^{-1}\embed_{0,L}^T + \sum_{\ell=1}^L 
  \embed_{\ell,L} \Hmat_\ell \D_\ell \Hmat_\ell^T \embed_{\ell,L}^T.
\end{align}
For $d=2$, the subsequent Theorem~\ref{thm:precond} is already proved in~\cite[Section~III.B]{MR3634453} for $\Gamma = \partial\Omega$ and in~\cite[Section~6.3]{dissFuehrer} for $\Gamma \subsetneqq \partial\Omega$. For $d = 3$, we need the following additional assumptions: 
\begin{explain}
\item First, suppose that $\Omega \subset \R^3$ is simply connected and $\Gamma = \partial\Omega$. 
\item Second, let $\widehat\TT_0$ be a conforming triangulation of $\Omega$ into non-degenerate compact simplices such that $\TT_0 = \widehat\TT_0|_\Gamma$ is the induced boundary partition on $\Gamma$.
\end{explain}
Then, the following theorem is our first main result. The proof is given in
Section~\ref{proof:precond}.

\begin{theorem}\label{thm:precond}
Under the foregoing assumptions, the preconditioner $\P_L$ from~\eqref{eq:defPrec} is
optimal, i.e., there holds~\eqref{eq1:pcg}, where $\Cpcg\geq1$ depends only on $\Omega$ and $\widehat\TT_0$, but is independent of $L\in\N$.
\end{theorem}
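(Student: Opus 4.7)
The plan is to apply the abstract theory of additive Schwarz methods (see, e.g., Toselli--Widlund, Chapter~2), which reduces Theorem~\ref{thm:precond} to establishing two estimates for the subspace decomposition~\eqref{eq:decomp}. The first is a \emph{stable decomposition} (lower bound): for every $\PPsi_L\in\SX_L$ one must exhibit $\PPsi_0\in\SX_0$ and $\PPsi_{\ell,E}\in\SX_{\ell,E}$ with $\PPsi_L=\PPsi_0+\sum_{\ell=1}^L\sum_{E\in\EE_\ell^\star}\PPsi_{\ell,E}$ and $\enorm{\PPsi_0}^2+\sum_{\ell,E}\enorm{\PPsi_{\ell,E}}^2\lesssim\enorm{\PPsi_L}^2$. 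The second is a \emph{strengthened Cauchy--Schwarz / bounded overlap} estimate across levels and edges (upper bound). I would organise the proof around these two pillars.

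The upper bound is the routine half. At each fixed level $\ell$, $\gamma$-shape regularity bounds the number of Haar-type functions $\varphi_{\ell,E}$ touching any given element, which gives a finite colouring. Inter-level coupling is controlled by a strengthened Cauchy--Schwarz in $\enorm{\cdot}\simeq\norm{\cdot}{\H^{-1/2}(\Gamma)}$ that exploits the geometric decay of element sizes under bisection, so that $\edual{\varphi_{k,E'}}{\varphi_{\ell,E}}$ decays exponentially in $\ell-k$. Such estimates are already available in~\cite{MR3634453} for $d=2$ and carry over to the three-dimensional Haar functions~\eqref{eq:psi} with only notational changes.

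The stable decomposition is the genuinely difficult step, and is where the arguments of~\cite{MR3634453,dissFuehrer} fail to extend from $d=2$. I would follow the strategy announced in the introduction and transfer the BEM question, via the abstract mechanism of~\cite{oswald99}, to a local multilevel stability question for an $H^1$-conforming finite element space on the bulk triangulation $\widehat\TT_\ell$. Concretely, since $\Omega$ is simply connected and $\TT_0=\widehat\TT_0|_\Gamma$, a mean-value-free $\PPsi\in\PP_*^0(\TT_\ell)$ can be identified, via a suitable lifting into $\Omega$, with an object in the bulk FEM space, in such a way that $\enorm{\PPsi}$ is equivalent to the bulk energy seminorm and the Haar functions $\varphi_{\ell,E}$ correspond (up to bounded factors) to nodal basis increments attached to new or locally refined edges. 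The recent FEM stability results of~\cite{hiptwuzheng2012,MR3522965} on adaptively bisected tetrahedral meshes---which prove local multilevel stability precisely for the ``new nodes plus neighbours'' selection matching $\EE_\ell^\star$---then yield the FEM splitting; pulling it back through the transfer gives the $\H^{-1/2}$-stable decomposition, with constants depending only on $\Omega$ and $\widehat\TT_0$. The principal obstacle I anticipate is constructing this transfer so that the edge selection $\EE_\ell^\star$ on $\Gamma$ is matched exactly by the ``new interior degrees of freedom and their neighbours'' on $\widehat\TT_\ell$, and so that the coarse component $\PPsi_0\in\SX_0$ arises from a stable coarse projection on the FEM side; once this identification is verified, norm equivalence for the lifting combined with the FEM theorem completes the stable decomposition and, with the upper bound, the proof.
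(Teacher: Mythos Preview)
Your overall architecture---reduce to the two-sided multilevel norm equivalence and obtain the hard direction by transferring to a bulk finite element space via the Oswald mechanism---is exactly the paper's plan. The genuine gap is the target of the transfer: you propose an $H^1$-conforming scalar FEM space on $\widehat\TT_\ell$, but this cannot work. The Haar functions $\varphi_{\ell,E}\in\PP_*^0(\TT_\ell)\subset\H^{-1/2}(\Gamma)$ are edge-based, mean-zero, piecewise constant objects; they do not arise as (Dirichlet) traces of scalar hat functions, whose boundary values live in $H^{1/2}(\Gamma)$, and there is no bounded identification between $\enorm{\cdot}\simeq\norm{\cdot}{\H^{-1/2}(\Gamma)}$ and an $H^1$ bulk seminorm that respects the edge-wise decomposition. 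You have also misread~\cite{hiptwuzheng2012}: that paper is a local multilevel decomposition of the lowest-order \emph{N\'ed\'elec} space in $\Hcurl\Omega$, not of $H^1$-conforming elements.

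The correct transfer is to $\ND^1(\widehat\TT_\ell)\subset\Hcurl\Omega$ via the normal trace of the curl: one has $\varphi_{\ell,E}=\curl\uu_{\ell,E}\cdot\normal|_\Gamma$ with $\norm{\varphi_{\ell,E}}{H^{-1/2}(\Gamma)}\simeq\norm{\uu_{\ell,E}}{\Hcurl\Omega}$, and~\cite{MR3522965} supplies a stable discrete extension $\extND_\ell:\PP_*^0(\TT_\ell)\to\ND^1(\widehat\TT_\ell)$ (via a divergence-free $\RT^0$ lifting followed by a discrete potential). The multilevel decomposition of~\cite{hiptwuzheng2012} for $\ND^1$ involves both edge functions $\uu_{\ell,E}$ and gradients $\nabla\eta_{\ell,\x}$; under $\curl(\cdot)\cdot\normal|_\Gamma$ the gradient subspaces vanish and the edge subspaces map onto $\SX_{\ell,E}$, which is precisely why the $\EE_\ell^\star$ selection on $\Gamma$ matches the ``new edges plus neighbours'' selection in the bulk. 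With this identification, the paper obtains \emph{both} inequalities of the norm equivalence from the $\Hcurl$ result, so your proposed direct strengthened Cauchy--Schwarz argument in $\H^{-1/2}(\Gamma)$ is unnecessary (and would be delicate to execute level-robustly in a negative-order norm).
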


We stress that the matrix in~\eqref{eq:defPrec} will never be assembled in practice. The PCG algorithm only needs the action of $\P_L^{-1}$ on a vector. This can be done recursively by using the embeddings $\embed_{\ell,\ell+1}$ which are, in fact, sparse. Up to (storing and) inverting $\A_0$ on the coarse mesh, the evaluation of $\P_L^{-1}\x$ can be done in $\OO(\#\TT_L)$ operations; see, e.g.,~\cite[Section~3.1]{MR3612925} for a detailed discussion.
If the mesh $\TT_L$ is fine compared to the initial mesh $\TT_0$ (or if $\A_0$ is realized with, e.g., $\mathcal{H}$-matrix techniques), then the computational costs and storage requirements associated with $\A_0$ can be neglected.

\begin{remark}
Our proof for $d = 3$ requires additional assumptions on $\Omega$, $\Gamma = \partial\Omega$, and $\TT_0$. As stated above, the case $d = 2$ allows for a different proof (which, however, does not transfer to $d = 3$) and can thus avoid these assumptions; see~\cite{MR3634453,dissFuehrer}. We believe that Theorem~\ref{thm:precond} also holds for $d = 3$ and $\Gamma \subsetneqq \partial \Omega$. This is also underpinned by a numerical experiment in Section~\ref{subsection:screen}. The mathematical proof, however, remains open.
\end{remark}

\def\qest{q_{\rm est}}
\def\qlin{q_{\rm lin}}
\def\qred{q_{\rm red}}
\def\qctr{q_{\rm ctr}}
\def\Cest{C_{\rm est}}
\def\Cred{C_{\rm red}}
\def\Crel{C_{\rm rel}}
\def\Cdrl{C_{\rm drl}}
\def\Cstb{C_{\rm stb}}
\def\Cson{C_{\rm son}}
\def\HH{\mathcal{H}}
\def\T{\mathbb{T}}
\def\eps{\varepsilon}
\def\quasierror{\Lambda}
\subsection{Optimal convergence of adaptive algorithm}
\label{section:main_results_algorithm}%
We analyze the following adaptive strategy which is driven by the weighted-residual error estimator~\eqref{eq:estimator}. We note that Algorithm~\ref{algorithm} as well as the following results are independent of the precise preconditioning strategy as long as the employed preconditioners are optimal; see Section~\ref{section:optimal_preconditioners}.

\begin{algorithm}\label{algorithm}
{\bfseries Input:} Conforming triangulation $\TT_0$ of $\Gamma$, adaptivity parameters $0<\theta\le1$ and $\lambda > 0$, and $\Cmark > 0$, optimal preconditioning strategy $\P_\coarse$ for all $\TT_\coarse \in \refine(\TT_0)$.\\
{\bfseries Loop:} With $k := 0 =: j$ and $\PPhi_{00} := 0$, iterate the following steps~{\rm(i)--(vii)}:
\begin{itemize}
\item[\rm(i)] Update counter $(j,k) \mapsto (j,k+1)$.
\item[\rm(ii)] Do one step of the PCG algorithm with the optimal preconditioner $\P_j$ to obtain $\PPhi_{jk} \in \PP^0(\TT_j)$ from $\PPhi_{j(k-1)} \in \PP^0(\TT_j)$.
\item[\rm(iii)] Compute the local contributions $\eta_j(T,\PPhi_{jk})$ of the error estimator for all $T \in \TT_j$.
\item[\rm(iv)] If $\enorm{\PPhi_{jk} - \PPhi_{j(k-1)}} > \lambda \, \eta_j(\PPhi_{jk})$, continue with {\rm(i)}.
\item[\rm(v)] Otherwise, define $\k(j):=k$ and determine some set $\MM_j \subseteq \TT_j$ with up to the multiplicative factor $\Cmark$ minimal cardinality such that 
$\theta \, \eta_j(\PPhi_{jk}) \le \eta_j(\MM_j, \PPhi_{jk})$.
\item[\rm(vi)] Generate $\TT_{j+1} := \refine(\TT_j,\MM_j)$ and define $\PPhi_{(j+1)0} := \PPhi_{jk}$.
\item[\rm(vii)] Update counter $(j, k) \mapsto (j+1, 0)$ and continue with~{\rm(i)}. 
\end{itemize}
{\bfseries Output:} Sequences of successively refined triangulations $\TT_j$, discrete solutions $\PPhi_{jk}$, and corresponding error estimators $\eta_j(\PPhi_{jk})$, for all $j \ge 0$ and $k \ge 0$.\qed
\end{algorithm}

\begin{remark}
The choice $\lambda = 0$ corresponds to the case that 
\eqref{eq:galerkin} is solved exactly, i.e., $\PPhi_{(j+1)0} = \PPhi_j^\exact$. Then, optimal convergence of Algorithm~\ref{algorithm} has already been proved in~\cite{fkmp13,gantumur,cmam,partOne} for weakly-singular integral equations and~\cite{gantumur,partTwo} for hyper-singular integral equations.
The choice $\theta = 1$ will generically lead to uniform mesh-refinement, where for each mesh all elements $\MM_j = \TT_j$ are refined in step~{\rm(vi)} of Algorithm~\ref{algorithm}. Instead, small $0 < \theta \ll 1$, will lead to highly adapted meshes.
\end{remark}

\begin{remark}
Let $\QQ := \set{(j, k) \in \N_0 \times \N_0}{\text{index $(j, k)$ is used in Algorithm~\ref{algorithm}}}$. It holds that $(0,0) \in \QQ$. Moreover, for $j, k\in \N_0$, it holds that
\begin{explain}
\item for $j\ge1$, $(j, 0) \in \QQ$ implies that $(j-1, 0) \in \QQ$,
\item for $k\ge1$, $(j, k) \in \QQ$ implies that $(j, k-1) \in \QQ$.
\end{explain}
If $j$ is clear from the context, we abbreviate $\k := \k(j)$, e.g., $\PPhi_{j\k} := \PPhi_{j\k(j)}$. In particular, it holds that $\PPhi_{j\k} = \PPhi_{(j+1)0}$. 
Since PCG (like any Krylov method) provides the exact solution after at most $\#\TT_j$ steps, it follows that $1 \le \k(j) < \infty$.
Finally, we define the ordering
\begin{align*}
 (j',k') < (j,k) 
 \quad \stackrel{\rm def}\Longleftrightarrow \quad
 \left\{\begin{array}{ll}
  \text{either:}& j' < j\\
  \text{or:} & j' = j \text{ and } k' < k
 \end{array}\right\}
 \qquad \text{for all } (j', k'), (j, k) \in \QQ.
\end{align*}
Moreover, let 
\begin{align}
 |(j,k)| := 
 \begin{cases}
 0, & \text{if } j = 0 = k, \\
 \#\set{(j',k') \in \QQ}{(j',k') < (j,k) \text{ and } k'<\k(j^\prime)},
 & \text{if } j > 0 \text{ or } k > 0,
 \end{cases}
\end{align}
be the total number of PCG iterations until the computation of $\phi_{jk}$.
Note that $j'>j$ and $|(j',k')| = |(j,k)|$ imply that $j'=j+1$, $k=\k(j)$, and $k'=0$ and hence $\PPhi_{j'k'} = \PPhi_{jk}$.\qed
\end{remark}

\begin{theorem}\label{theorem:algorithm}
The output of Algorithm~\ref{algorithm} satisfies the following assertions~{\rm(a)--(c)}.
The constants $\Crel^\star, \Ceff^\star>0$ depend only on $\qpcg$, $\Gamma$, and the uniform $\gamma$-shape regularity of $\TT_j \in \refine(\TT_0)$, whereas
$\Clin>1$ and $0<\qlin<1$ depend additionally only on 
$\theta$ and $\lambda$,
and $\Copt>0$ depends additionally only on
$s$, $\TT_0$, and $\quasierror_{0\k}$.

{\rm(a)} There exists a constant $\Crel^\star > 0$ such that
\begin{align}\label{eq:algorithm:reliability}
 \enorm{\phi^\exact - \PPhi_{jk}}
 \le \Crel^\star \, \big( \eta_j(\PPhi_{jk}) + \enorm{\PPhi_{jk} - \PPhi_{j(k-1)}} \big)
 \quad \text{for all } (j,k) \in \QQ \text{ with } k \ge 1.
\end{align}
There exists a constant $\Ceff^\star > 0$ such that, provided that $\phi^\exact \in L^2(\Gamma)$, it holds that
\begin{align}\label{eq:algorithm:efficiency}
 \eta_j(\PPhi_{jk})
 \le \Ceff^\star \, \big( \norm{h_j^{1/2}(\phi^\exact - \PPhi_{jk})}{L^2(\Gamma)}
 + \enorm{\PPhi_{jk} - \PPhi_{j(k-1)}} \big)
 \text{ for all } (j,k) \in \QQ, k \ge 1.
\end{align}

{\rm(b)} For arbitrary $0 < \theta \le 1$ and arbitrary $\lambda > 0$, there exist constants $\Clin \ge 1$ and $0 < \qlin < 1$ such that the quasi-error
\begin{align}\label{eq:def:quasierror}
 \quasierror_{jk}^2 := \enorm{\phi^\star - \PPhi_{jk}}^2 + \eta_{j}(\PPhi_{jk})^2
\end{align}
is linearly convergent in the sense of
\begin{align}\label{eq1:theorem:algorithm}
 \quasierror_{j'k'} \le \Clin \, \qlin^{|(j',k')|-|(j,k)|} \, \quasierror_{jk}
 \quad \text{for all } (j,k), (j',k') \in \QQ
 \text{ with } (j',k') \ge (j,k).
\end{align}

{\rm(c)}  For $s > 0$, define the approximation class
\begin{align}
 \norm{\phi^\exact}{\AA_s}
 := \sup_{N \in \N_0} \Big( (N+1)^s \, \min_{\substack{\TT_\coarse \in \refine(\TT_0) \\ \#\TT_\coarse - \#\TT_0 \le N}} \eta_\coarse(\PPhi_\coarse^\star)\Big).
\end{align}
Then, for sufficiently small $0 < \theta \ll 1$ and $0 < \lambda \ll 1$, cf.\ Assumption~\eqref{eq:ass_theta} below, and all $s>0$, it holds that
\begin{align}\label{eq:theorem:opt_rate}
 \norm{\phi^\exact}{\AA_s} < \infty
 \,\,\, \Longleftrightarrow \,\,\,
  \exists \, \Copt > 0:
 \sup_{(j,k) \in \QQ} \big( \#\TT_j - \#\TT_0 + 1 \big)^{s} \, \quasierror_{jk}\leq\Copt\,\norm{\phi^\exact}{\AA_s} < \infty.
\end{align}
\end{theorem}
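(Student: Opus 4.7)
For part~(a), I would interpolate between the exact Galerkin solution $\PPhi_j^\exact$ and the PCG iterate $\PPhi_{jk}$. Lemma~\ref{lemma:pcg} together with the triangle inequality gives $\enorm{\PPhi_j^\exact - \PPhi_{jk}} \le \tfrac{\qpcg}{1-\qpcg}\,\enorm{\PPhi_{jk} - \PPhi_{j(k-1)}}$, so the unknown PCG error can be read off the consecutive iterates. Plugging this into $\enorm{\phi^\exact - \PPhi_{jk}} \le \enorm{\phi^\exact - \PPhi_j^\exact} + \enorm{\PPhi_j^\exact - \PPhi_{jk}}$, bounding the first summand by reliability~\eqref{eq:prelim:reliability}, and converting $\eta_j(\PPhi_j^\exact)$ to $\eta_j(\PPhi_{jk})$ via the standard Lipschitz estimate $|\eta_j(\PPhi_j^\exact) - \eta_j(\PPhi_{jk})| \le \Cstab\,\enorm{\PPhi_j^\exact - \PPhi_{jk}}$ yields~\eqref{eq:algorithm:reliability}. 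The weak efficiency bound~\eqref{eq:algorithm:efficiency} will follow from~\eqref{eq:weak_eff} by the same interpolation.

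For part~(b), the crux of the theorem, my plan is to work with a weighted quasi-error $\widetilde\quasierror_{jk}^2 := \enorm{\phi^\exact-\PPhi_{jk}}^2 + \alpha\,\eta_j(\PPhi_{jk})^2$ for a small parameter $\alpha>0$ to be calibrated, and to establish a one-step contraction $\widetilde\quasierror_{j'k'}^2 \le \qctr^2\,\widetilde\quasierror_{jk}^2$ with $\qctr<1$ whenever $(j',k')$ is the immediate successor of $(j,k)$ in the ordering. Two regimes occur. For a PCG step $(j,k)\to(j,k+1)$ with $k+1\le\k(j)$, Galerkin orthogonality gives $\enorm{\phi^\exact-\PPhi_{j(k+1)}}^2 = \enorm{\phi^\exact-\PPhi_j^\exact}^2 + \enorm{\PPhi_j^\exact-\PPhi_{j(k+1)}}^2$ and PCG contracts the second summand by $\qpcg^2$; since the PCG loop has not terminated at index $k$, the stopping criterion in step~{\rm(iv)} is violated, so $\lambda\,\eta_j(\PPhi_{jk}) < \enorm{\PPhi_{jk}-\PPhi_{j(k-1)}}$, which together with estimator stability converts the geometric decay of the energy error into geometric decay of $\widetilde\quasierror$. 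For a refinement step $(j,\k(j))\to(j+1,0)$, I would use the standard reduction $\eta_{j+1}(\PPhi_{j+1}^\exact)^2 \le \qred\,\eta_j(\PPhi_j^\exact)^2 + \Cred\,\enorm{\PPhi_{j+1}^\exact-\PPhi_j^\exact}^2$ under Dörfler marking, a Pythagoras estimate on successive meshes, and the triggered stopping bound $\enorm{\PPhi_{j\k}-\PPhi_{j(\k-1)}}\le\lambda\,\eta_j(\PPhi_{j\k})$ to absorb the inexactness of $\PPhi_{(j+1)0}=\PPhi_{j\k}$. Calibrating $\alpha$ and $\lambda$ against $\qpcg,\qred,\Cred,\Cstab$, one obtains a uniform contraction factor in both regimes, and iterating along the ordering delivers~\eqref{eq1:theorem:algorithm} with $\qlin:=\qctr$ after absorbing the norm-equivalence constants into $\Clin$.

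For part~(c), I would invoke the rate-optimality framework of~\cite{axioms}. The axioms stability, reduction, discrete reliability, and general quasi-orthogonality of $\eta_\coarse$ evaluated at the exact Galerkin solution are available for weakly-singular BEM from~\cite{fkmp13,partOne}; together with the linear convergence from~(b) and the quasi-minimality of $\MM_j$ through $\Cmark$, they constitute the standard hypotheses for rate optimality. Starting from $\norm{\phi^\exact}{\AA_s}<\infty$ and sufficiently small $\theta,\lambda$ (Assumption~\eqref{eq:ass_theta}), Stevenson's comparison argument supplies a competing refinement of $\TT_0$ of controlled cardinality that satisfies the Dörfler criterion for $\PPhi_j^\exact$; transferring to the inexact iterate $\PPhi_{j\k}$ via~(a) and the triggered stopping inequality, invoking Dörfler quasi-minimality and the overlay property of NVB/bisection, and then telescoping the resulting geometric series through the linear convergence of~(b), one obtains $\#\TT_j - \#\TT_0 \lesssim \quasierror_{j\k}^{-1/s}\,\norm{\phi^\exact}{\AA_s}^{1/s}$, which is the nontrivial implication of~\eqref{eq:theorem:opt_rate}. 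The converse is immediate from $\eta_j(\PPhi_j^\exact)\lesssim\quasierror_{j\k}$ and the definition of $\AA_s$.

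I expect part~(b) to be the principal obstacle: PCG steps and refinement steps are governed by different mechanisms (only the energy error drops during PCG, while both quantities move across a refinement), and they are coupled only indirectly through the transferred iterate $\PPhi_{j\k}=\PPhi_{(j+1)0}$. The stopping criterion in step~{\rm(iv)} is designed precisely to bridge the two regimes, and verifying that $\alpha,\lambda,\theta$ can be picked simultaneously so that both regimes contract within a single composite quantity is the technical heart of the proof on which~(a) and~(c) ultimately rest.
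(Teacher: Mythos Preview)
Your outline for~(a) and~(c) matches the paper. The gap is in~(b), and it is structural, not cosmetic.

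In your decomposition, the refinement transition $(j,\k(j))\to(j+1,0)$ cannot contract the weighted quasi-error. By construction $\PPhi_{(j+1)0}=\PPhi_{j\k}$, so $\enorm{\phi^\exact-\PPhi_{(j+1)0}}=\enorm{\phi^\exact-\PPhi_{j\k}}$: the energy component is \emph{unchanged}, and only the estimator drops via D\"orfler reduction. With a small weight $\alpha$ on the estimator (which you need for the PCG regime), no uniform $\qctr<1$ can absorb an unchanged energy term. Invoking the exact-solution reduction $\eta_{j+1}(\PPhi_{j+1}^\exact)^2\le\qred\,\eta_j(\PPhi_j^\exact)^2+\Cred\,\enorm{\PPhi_{j+1}^\exact-\PPhi_j^\exact}^2$ and a Pythagoras identity on successive meshes does not help, because neither $\PPhi_{j+1}^\exact$ nor $\PPhi_j^\exact$ appears in your quasi-error at those indices.

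The paper's remedy is to shift the partition: the contraction runs $(j,k)\to(j,k+1)$ only for $k+1<\k(j)$, and then $(j,\k(j)-1)\to(j+1,0)$ in a single step that \emph{contains} one PCG iteration (giving the $\qpcg^2$-reduction of the energy error) together with the estimator reduction from refinement. This is consistent with the counter $|(j,k)|$, which deliberately skips the terminal index $(j,\k(j))$. Correspondingly, in the intermediate PCG regime the stopping criterion is exploited at the \emph{new} index: since $k+1<\k(j)$, step~(iv) gives $\eta_j(\PPhi_{j(k+1)})<\lambda^{-1}\,\enorm{\PPhi_{j(k+1)}-\PPhi_{jk}}$, which bounds the new estimator directly by a PCG-controlled quantity. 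Your use of the criterion at index $k$ yields information about $\eta_j(\PPhi_{jk})$, the quantity you are trying to bound \emph{from}, not \emph{to}. Finally, note that~(b) is asserted for \emph{arbitrary} $\lambda>0$; you do not get to calibrate $\lambda$, only the auxiliary weight (the paper's $\mu$) and a handful of splitting parameters are chosen in terms of the given $\lambda,\theta,\qpcg$.

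One smaller point on~(a): the efficiency~\eqref{eq:algorithm:efficiency} does not follow from~\eqref{eq:weak_eff} by the same triangle-inequality interpolation alone. After stability, a term $\enorm{\phi^\exact-\PPhi_j^\exact}$ remains, and the paper controls it by the weighted $L^2$-norm via C\'ea plus a duality argument, $\enorm{(1-\mathbb G_j)\psi}\lesssim\norm{h_j^{1/2}\psi}{L^2(\Gamma)}$, applied with $\psi=\phi^\exact-\PPhi_{jk}$.
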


\begin{remark}\label{remark:quasierror}
By definition, it holds that $\eta_{j}(\PPhi_{jk}) \le \quasierror_{jk}$ for all $(j,k) \in \QQ$. 
If $\PPhi_{jk} \in \{\PPhi_j^\exact,\PPhi_{j\k}\}$, then there also holds the converse inequality $\eta_{j}(\PPhi_{jk}) \simeq \quasierror_{jk}$. To see this, note that $\PPhi_{jk} = \PPhi_j^\exact$ and~\eqref{eq:prelim:reliability} prove that $\quasierror_{jk} \le (1+\Crel) \, \eta_{j}(\PPhi_{jk})$. If $\PPhi_{jk} = \PPhi_{j\k}$, then Theorem~\ref{theorem:algorithm}{\rm(a)} and Step~{\rm(iv)} of Algorithm~\ref{algorithm} prove that $\quasierror_{j\k} \le (1+\Crel^\star) \, \eta_{j}(\PPhi_{j\k}) + \enorm{\PPhi_{j\k} - \PPhi_{j(\k-1)}} \le (1 + \Crel^\star + \lambda) \, \eta_{j}(\PPhi_{j\k})$.
\end{remark}

\subsection{Almost optimal computational complexity}
\label{section:main:complexity}%
Suppose that we use $\HH^2$-matrices for the efficient treatment of the discrete single-layer integral operator. 
Recall that the storage requirements (resp.\ the cost for one matrix-vector multiplication) of an $\mathcal{H}^2$-matrix are of order $\OO(Np^2)$, where $N$ is the matrix size and $p \in \N$ is the local block rank. For $\mathcal{H}^2$-matrices (unlike $\mathcal{H}$-matrices), these costs are, in particular, independent of a possibly unbalanced binary tree which underlies the hierarchical data structure~\cite{hackbusch}.  

For a mesh $\TT_\coarse \in \T$, we employ the local block rank $p = \OO(\log(1+\#\TT_\coarse))$ to ensure that the matrix compression is asymptotically exact as $N = \#\TT_\coarse \to \infty$, i.e., the error between the exact matrix and the $\HH$-matrix decays exponentially fast; see~\cite{hackbusch}. We stress that we neglect this error in the following and assume that the matrix-vector multiplication (based on the $\HH^2$-matrix) yields the exact matrix-vector product.

The computational cost for storing $\A_\coarse$ (as well as for one matrix-vector multiplication) is $\OO((\#\TT_\coarse)\log^2(1+\#\TT_\coarse))$. 
In an idealized optimal case, the computation of $\phi_\coarse^\star$ is hence (at least) of cost $\OO((\#\TT_\coarse)\log^2(1+\#\TT_\coarse))$. 

We consider the computational costs for one step of Algorithm~\ref{algorithm}:
\begin{explain}
\item We assume that one step of the PCG algorithm with the employed optimal preconditioner is of cost $\OO\big((\#\TT_j)\log^2(1+\#\TT_j)\big)$; cf. the preconditioner from Section~\ref{section:oasp}.
\item We assume that we can compute $\eta_j(\psi_j)$ for any $\psi_j \in \PP^0(\TT_j)$ (by means of numerical quadrature) with $\OO\big((\#\TT_j)\log^2(1+\#\TT_j)\big)$ operations. 
\item Clearly, the D\"orfler marking in Step~(v) can be done in $\OO\big((\#\TT_j)\log(1+\#\TT_j)\big)$ operations by sorting. Moreover, for $\Cmark = 2$, Stevenson~\cite{stevenson07} proposed a realization of the D\"orfler marking based on binning, which can be performed at linear cost $\OO(\#\TT_j)$. 
\item Finally, the mesh-refinement in Step~(vi) can  be done in linear complexity $\OO(\#\TT_j)$ if the data structure is appropriate. 
\end{explain}
Overall, one step of Algorithm~\ref{algorithm} is thus done in $\OO((\#\TT_j)\log^2(1+\#\TT_j))$ operations.
However, an adaptive step $(j',k') \in \QQ$ depends on the full history of previous steps.
\begin{explain}
\item
Hence, the cumulative computational complexity for the adaptive step $(j',k') \in \QQ$ is of order $\OO\big(\sum_{(j,k) \le (j',k')} (\#\TT_j)\log^2(1+\#\TT_j)\big)$.
\end{explain}
The following corollary proves that Algorithm~\ref{algorithm} does not only lead to convergence of the quasi-error $\quasierror_{jk}$ 
with optimal rate with respect to the degrees of freedom (see Theorem~\ref{theorem:algorithm}), but also 
with \emph{almost} optimal rate with respect to the computational costs.

\def\opt#1{\widehat{#1}}
\begin{corollary}\label{corollary:algorithm}
For $j \in \N_0$, let $\opt\TT_{j+1} = \refine(\opt\TT_j,\opt\MM_j)$ with arbitrary $\opt\MM_j \subseteq \opt\TT_j$ and $\opt\TT_0 = \TT_0$. Let $s > 0$ and suppose that the corresponding error estimator $\opt\eta_j(\opt\phi_j^\star)$ converges at rate $s$ with respect to the single-step computational costs, i.e.,
\begin{align}\label{eq:cost:opt}
 \sup_{j \in \N_0} \big[(\#\opt\TT_j) \, \log^2(1+\#\opt\TT_j)\big]^s \, \opt\eta_j(\opt\phi_j^\star) < \infty.
\end{align}
Suppose that $\lambda$ and $\theta$ satisfy the assumptions of Theorem~\ref{theorem:algorithm}{\rm(c)}. Then, the quasi-errors $\quasierror_{jk}$ generated by Algorithm~\ref{algorithm} converge almost at rate $s$ with respect to the cumulative computational costs, i.e.,
\begin{align}\label{eq:cost}
 \sup_{(j',k') \in \QQ} \Big[\sum_{(j,k) \le (j',k')} (\#\TT_j)\log^2(1+\#\TT_j)\big)\Big]^{s-\eps} \, \quasierror_{j'k'} < \infty
 \quad \text{for all } \eps > 0.
\end{align}
\end{corollary}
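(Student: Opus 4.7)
The plan is to transfer the rate-optimality with respect to degrees of freedom, established in Theorem~\ref{theorem:algorithm}(c), to almost-optimality with respect to cumulative computational costs, by combining it with the linear convergence of Theorem~\ref{theorem:algorithm}(b) and then summing a geometric series. First, I would show that the hypothesis~\eqref{eq:cost:opt} implies $\norm{\phi^\exact}{\AA_s}<\infty$. Since the refinement only increases cardinality by a uniformly bounded factor per step (say $\#\opt\TT_{j+1}\le\Cson\,\#\opt\TT_j$), for every $N\in\N_0$ one can choose $j(N)$ maximal with $\#\opt\TT_{j(N)}-\#\TT_0\le N$, so that $\#\opt\TT_{j(N)}\ge(N+1)/\Cson$. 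Substituting into~\eqref{eq:cost:opt} and discarding the $\log$-factor, which only helps, I obtain $\opt\eta_{j(N)}(\opt\phi_{j(N)}^\exact)\lesssim(N+1)^{-s}$, which yields $\norm{\phi^\exact}{\AA_s}<\infty$. Theorem~\ref{theorem:algorithm}(c) then gives $\quasierror_{jk}\le C_1(\#\TT_j-\#\TT_0+1)^{-s}$ for all $(j,k)\in\QQ$.

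The heart of the proof combines this bound with the contraction $\quasierror_{j'k'}\le\Clin\qlin^{|(j',k')|-|(j,k)|}\quasierror_{jk}$ from Theorem~\ref{theorem:algorithm}(b). Solving for $\#\TT_j$ in terms of $\quasierror_{j'k'}$ produces
\begin{align*}
 \#\TT_j\le\#\TT_0+C_2\,q^{|(j',k')|-|(j,k)|}\,\quasierror_{j'k'}^{-1/s}
 \quad\text{with } q:=\qlin^{1/s}\in(0,1).
\end{align*}
The key observation, and the main obstacle of the argument, is that the counter $(j,k)\mapsto|(j,k)|$ enumerates $\QQ\cap\{(j,k)\le(j',k')\}$ with multiplicity at most two (the only repetition being the identification $\PPhi_{j\k}=\PPhi_{(j+1)0}$). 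Consequently, summing over $(j,k)\le(j',k')$ yields a convergent geometric series, and together with the bounds $\#\big(\QQ\cap\{(j,k)\le(j',k')\}\big)\lesssim|(j',k')|\lesssim\log(1/\quasierror_{j'k'})$ (from linear convergence) and $\#\TT_{j'}\le C_1^{1/s}\,\quasierror_{j'k'}^{-1/s}$, one arrives at
\begin{align*}
 \sum_{(j,k)\le(j',k')}(\#\TT_j)\log^2(1+\#\TT_j)\le C_3\,\quasierror_{j'k'}^{-1/s}\,\log^2\!\big(1/\quasierror_{j'k'}\big).
\end{align*}

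Finally, I would absorb the logarithmic factor into an arbitrarily small polynomial loss: for any $\eta>0$, $\log^2(1/x)\le C_\eta\,x^{-\eta}$ as $x\to 0^+$, so the right-hand side is dominated by $C_\eta\,\quasierror_{j'k'}^{-1/s-\eta}$. Given $\eps>0$, I choose $\eta=\eps/(s(s-\eps))$ so that $1/(s-\eps)=1/s+\eta$; rearranging then produces the claimed bound~\eqref{eq:cost}. Besides the enumeration issue mentioned above, the only delicate book-keeping is to verify that every prefactor appearing in the logarithmic term is genuinely dominated by powers of $\quasierror_{j'k'}^{-1/s}$ rather than by quantities depending on $\#\TT_{j'}$ in a way that would introduce circular reasoning.
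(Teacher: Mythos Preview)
Your proposal is correct and follows essentially the same strategy as the paper: deduce $\norm{\phi^\exact}{\AA_s}<\infty$ from~\eqref{eq:cost:opt}, invoke Theorem~\ref{theorem:algorithm}(c) to bound $\#\TT_j$ by $\quasierror_{jk}^{-1/s}$, and then combine with the linear convergence~\eqref{eq1:theorem:algorithm} to sum a geometric series. The only cosmetic difference is that the paper absorbs the logarithm \emph{before} summing via $(\#\TT_j)\log^2(1+\#\TT_j)\lesssim(\#\TT_j)^{1+\delta}\lesssim\quasierror_{jk}^{-1/t}$ with $t=s/(1+\delta)=s-\eps$, which spares the separate bookkeeping of $|(j',k')|\lesssim\log(1/\quasierror_{j'k'})$ and the final $\log^2(1/x)\le C_\eta x^{-\eta}$ step.
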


\section{Numerical experiments}
\label{section:numerics}

In this section, we present numerical experiments that underpin our theoretical findings.
We use lowest-order BEM for direct and indirect formulations in 2D as well as 3D. 
For each problem, we compare the performance of Algorithm~\ref{algorithm} for
\begin{itemize}
\item different values of $\lambda \in \{1,10^{-1},10^{-2},10^{-3},10^{-4}\}$,
\item different values of $\theta \in \{0.2,0.4,0.6,0.8,1\}$,
\end{itemize}
where $\theta = 1$ corresponds to uniform mesh-refinement. In particular, we monitor the
condition numbers of the arising BEM systems for diagonal preconditioning~\cite{amt99}, the proposed 
additive Schwarz preconditioning from Section~\ref{section:oasp}, and no preconditioning. The 2D implementation is based on our
MATLAB implementation {\sc Hilbert}~\cite{hilbert}, while the 3D implementation relies on an extension
of the BEM++ library~\cite{bempp}.

\begin{figure}
	\centering
	\begin{minipage}{.5\textwidth}
	\centering
	\raisebox{-0.5\height}{\includegraphics[width=.9\textwidth]{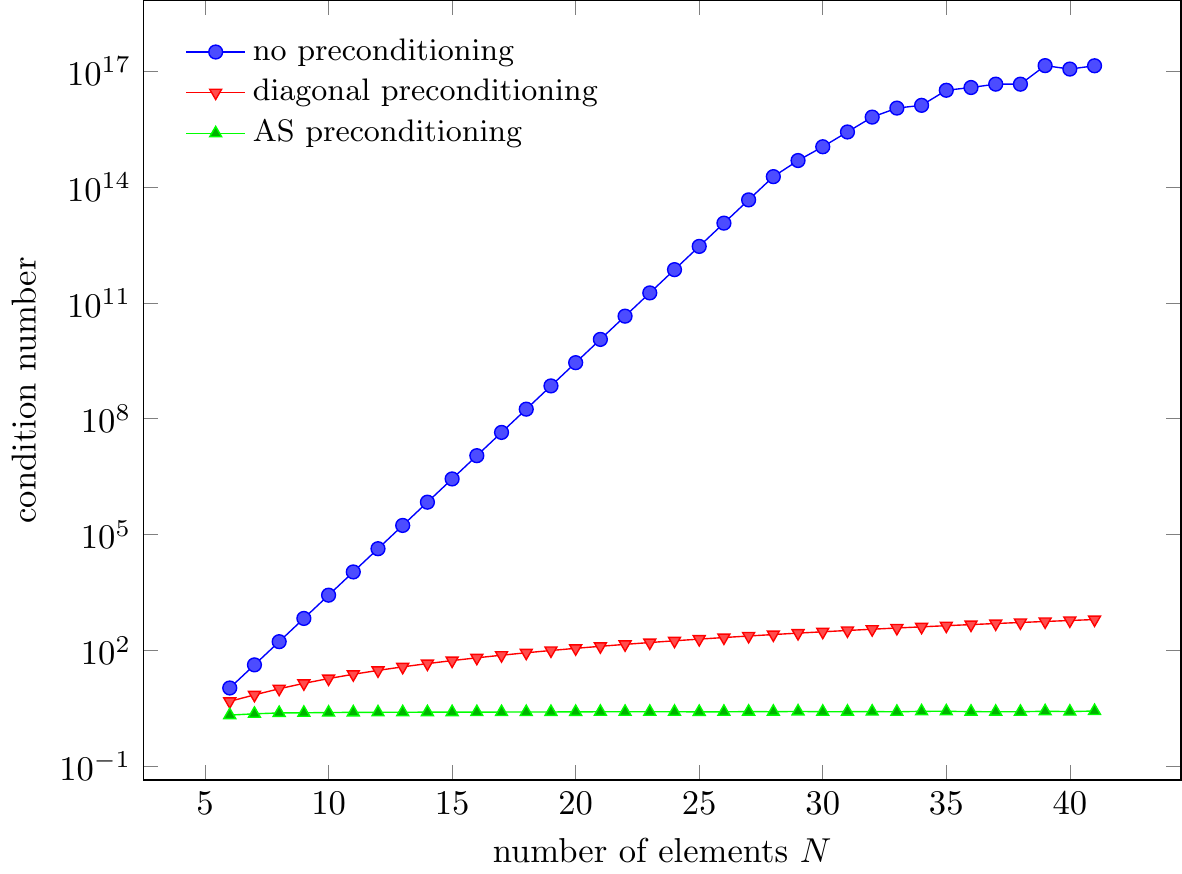}}\\%
	\raisebox{-0.5\height}{\includegraphics[width=.7\textwidth]{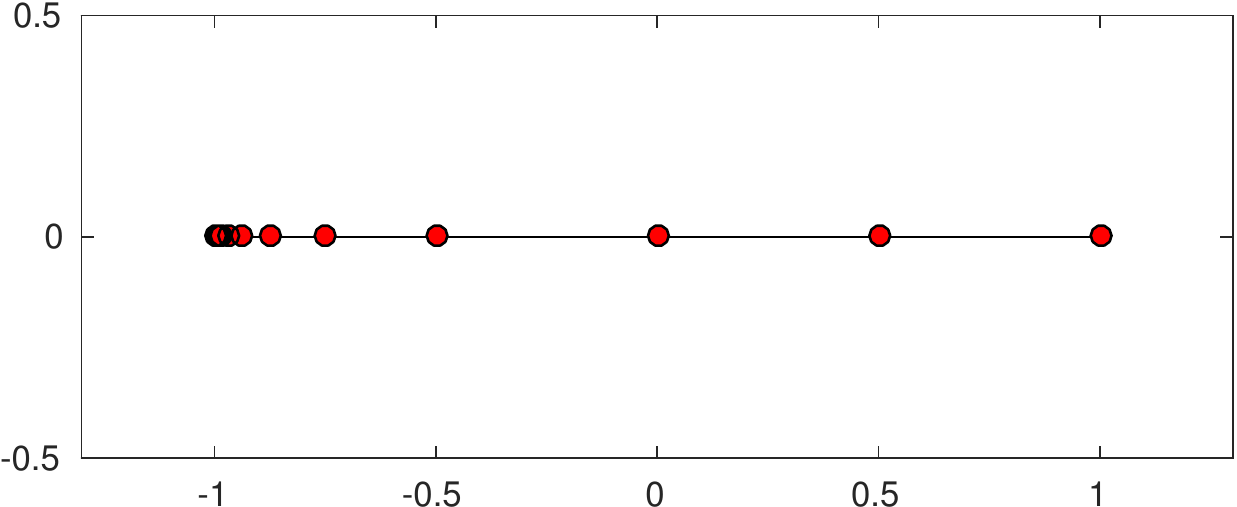}}\hspace*{-5mm}%
	\end{minipage}%
	\hfill%
	\begin{minipage}{.5\textwidth}
	\centering
	\raisebox{-0.5\height}{\includegraphics[width=.9\textwidth]{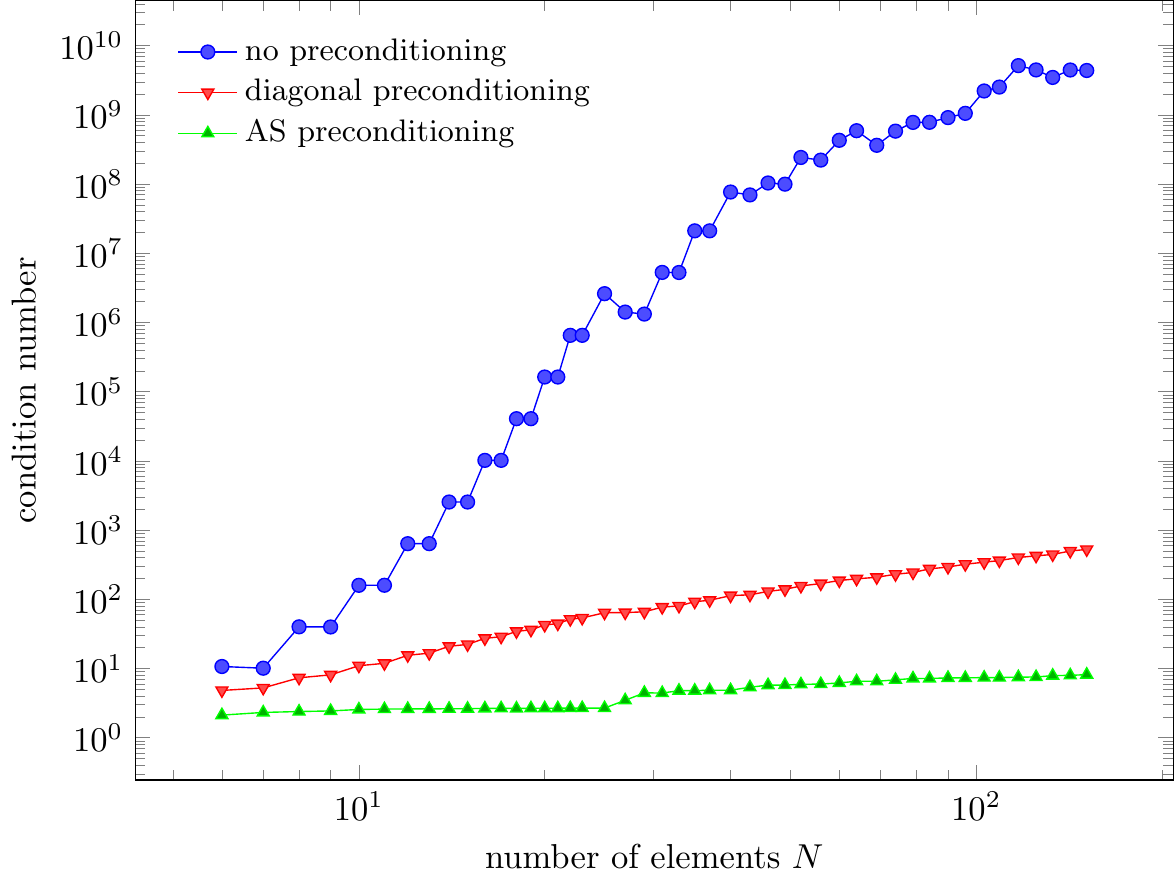}}\\%
	\raisebox{-0.5\height}{\includegraphics[width=.7\textwidth]{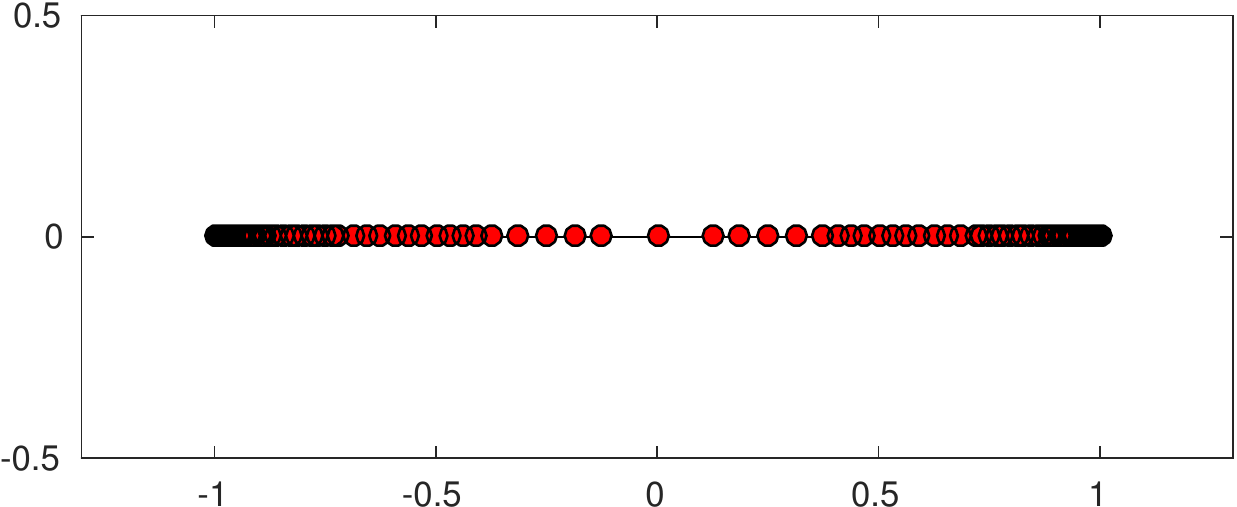}}\hspace*{-5mm}
	\end{minipage}%
	\caption{Example~\ref{subsection:slit}: Condition numbers of the preconditioned and non-preconditioned Galerkin matrix for an artificial refinement towards the left end point (left) and for the matrices arising from Algorithm~\ref{algorithm} (right).}
\label{fig:slit_cond_number}
\end{figure}
\begin{figure}
	\centering
	\raisebox{-0.5\height}{\includegraphics[width=0.48\textwidth]{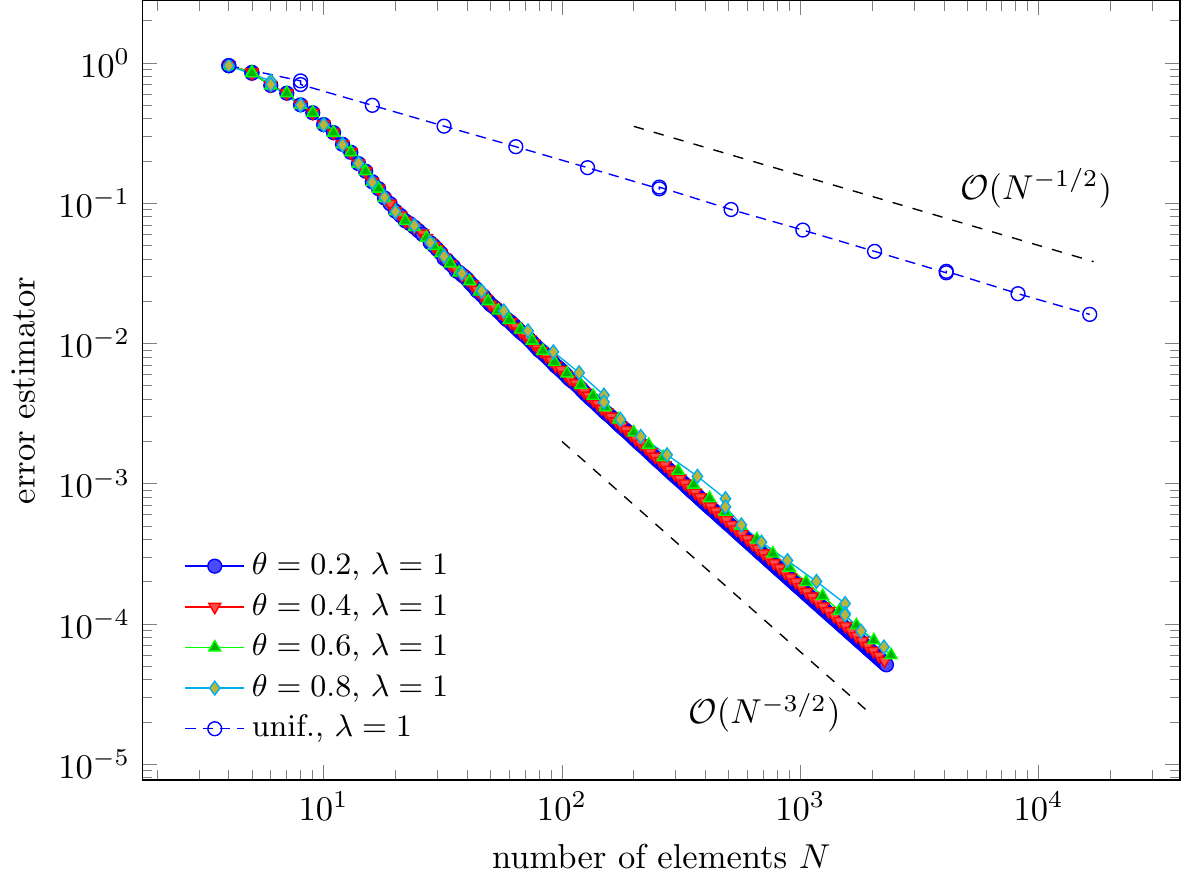}}
	\hfill
	\raisebox{-0.5\height}{\includegraphics[width=0.48\textwidth]{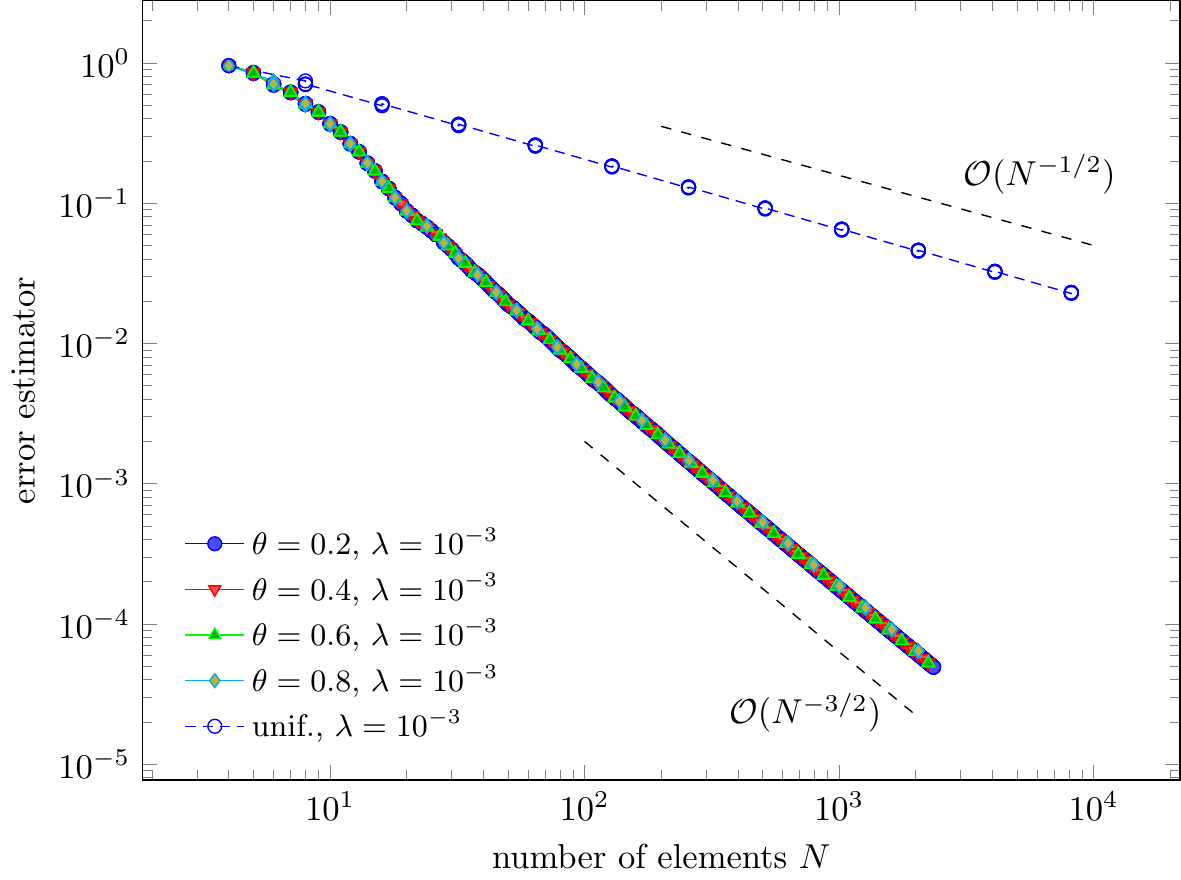}}\vspace{0.5cm}
	\raisebox{-0.5\height}{\includegraphics[width=0.48\textwidth]{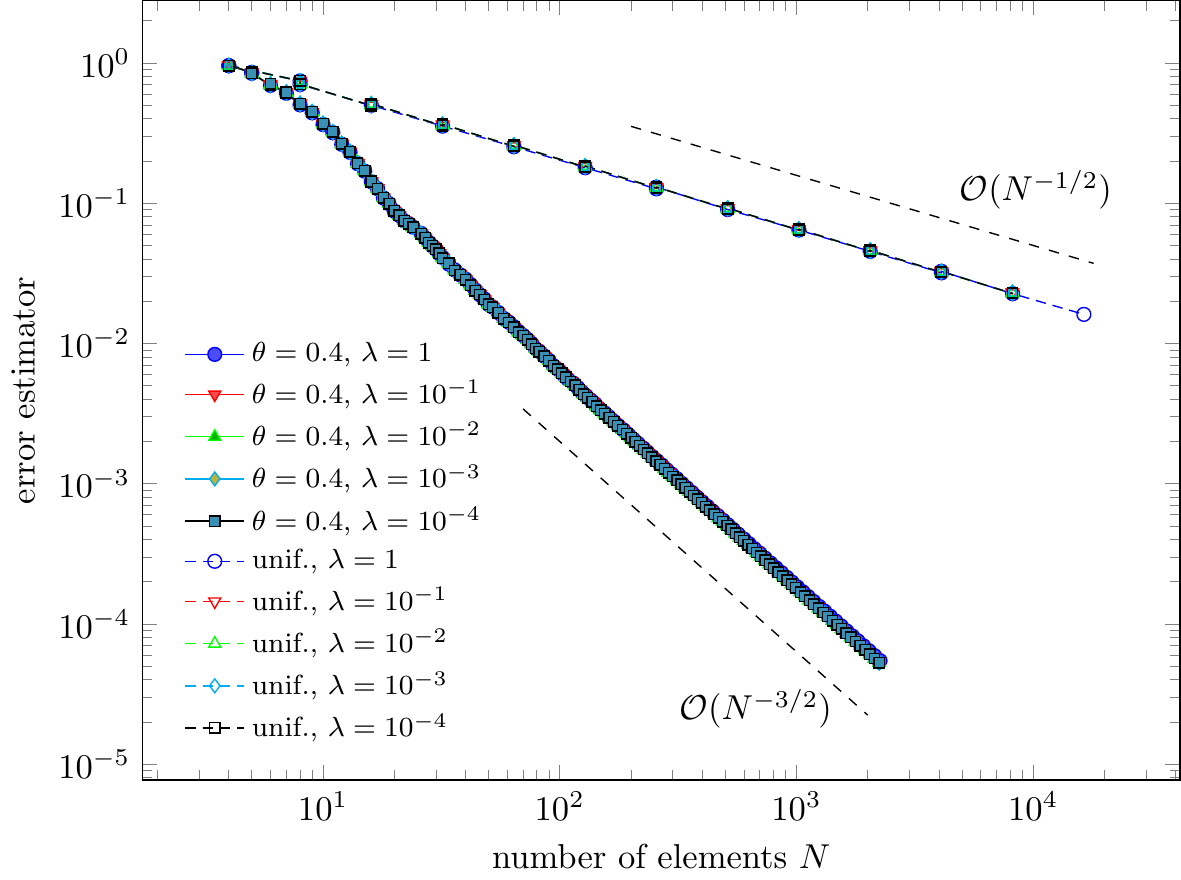}}
	\hfill
	\raisebox{-0.5\height}{\includegraphics[width=0.48\textwidth]{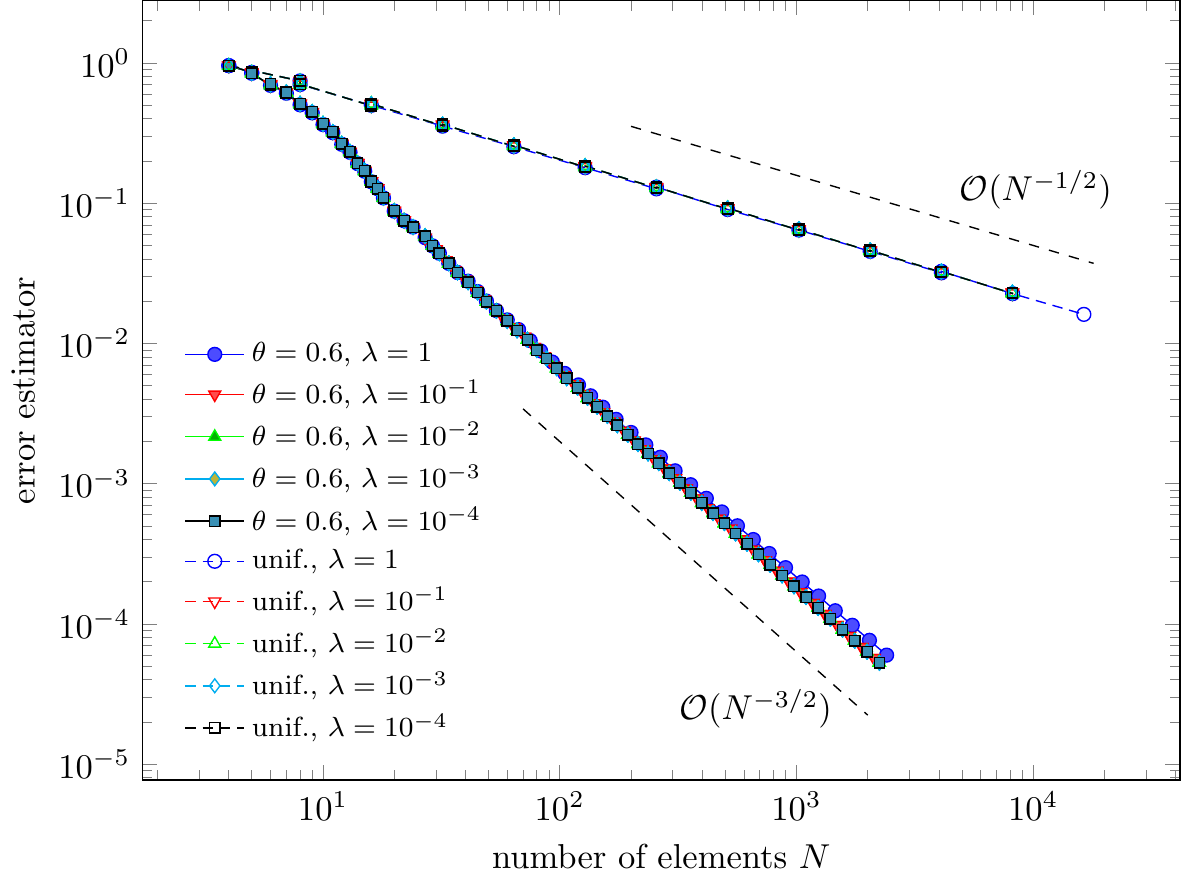}}
	\caption{Example~\ref{subsection:slit}: Estimator convergence for fixed values of $\lambda$ (left: $\lambda=1$, right: $\lambda=10^{-3}$) and $\theta\in\{0.2,0.4,0.6,0.8\}$ (top) and for fixed values of $\theta$ (left: $\theta=0.4$, right: $\theta=0.6$) and $\lambda\in\{1,10^{-1},\ldots,10^{-4}\}$ (bottom).}
\label{fig:slit_conv}
\end{figure}
\begin{figure}
  	\centering
  	\raisebox{-0.5\height}{\includegraphics[width=0.48\textwidth]{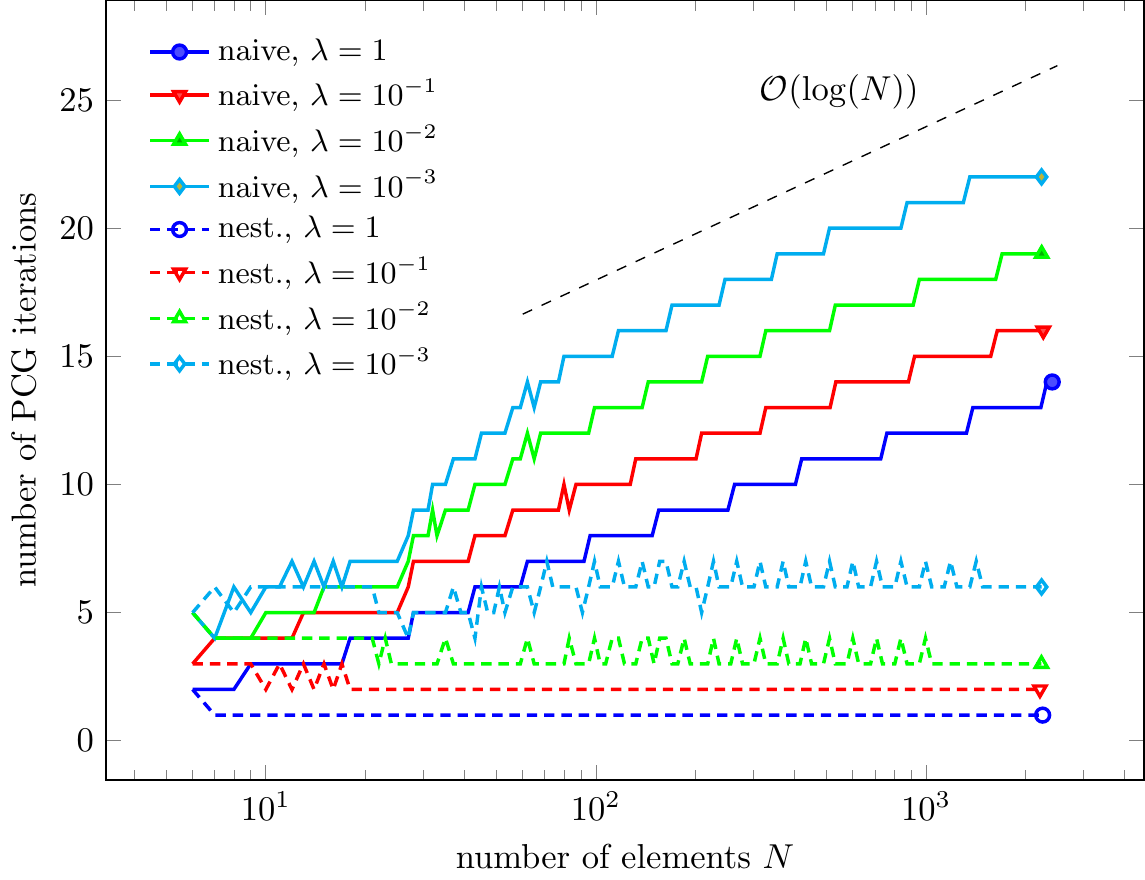}}
	\hfill
  	\raisebox{-0.5\height}{\includegraphics[width=0.48\textwidth]{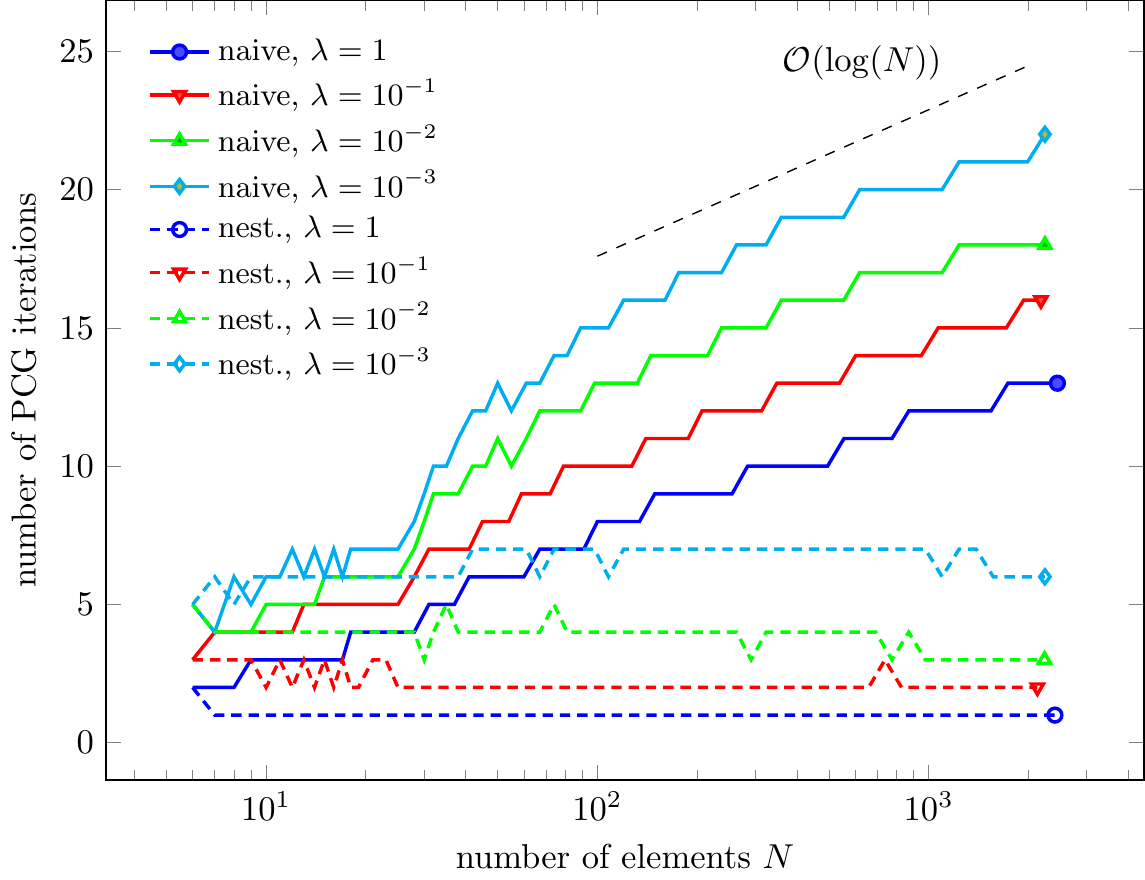}}
  	\caption{Example~\ref{subsection:slit}: Number of PCG iterations in Algorithm~\ref{algorithm} for nested iteration (dashed lines), i.e., $\pphi_{(j+1)0}:=\pphi_{j\k}$ in Step~(vi), and naive initial guess (solid lines), i.e., $\pphi_{(j+1)0}:=0$. We compare fixed values of $\theta$ (left: $\theta=0.4$, right: $\theta=0.6$) and $\lambda=\in\{1,10^{-1},\ldots,10^{-3}\}$.}
  	\label{fig:slit_not_nested}
\end{figure}
%
\subsection{Slit Problem in  2D}
\label{subsection:slit}
\noindent Let $\Gamma:=(-1,1)\times\{0\}$, cf.\ Figure~\ref{fig:slit_cond_number}.
We consider
\begin{align}\label{eq:slit_prob}
V \pphi=1\quad\textrm{ on }\Gamma.
\end{align}
The unique exact solution of \eqref{eq:slit_prob} reads $\phi^\exact(x,0):=-2x/\sqrt{1-x^2}$. For uniform mesh-refinement, we thus expect a convergence order of $\OO(N^{-1/2})$, while the optimal rate is $\OO(N^{-3/2})$ with respect to the number of elements.

In Figure~\ref{fig:slit_conv}, we compare Algorithm~\ref{algorithm} for different values for $\theta$ and $\lambda$ as well as uniform mesh-refinement. 
Uniform mesh-refinement leads only to the rate $\OO(N^{-1/2})$, while adaptivity, independently of the value of $\theta$ and $\lambda$, regains the optimal rate $\OO(N^{-3/2})$.
A naive initial guess in Step~(vi) of Algorithm~\ref{algorithm} (i.e., if $\pphi_{(j+1)0}:=0$) leads to a logarithmical growth of the number of PCG iterations, whereas for nested iteration $\pphi_{(j+1)0}:=\pphi_{j\k}$ (as formulated in Algorithm~\ref{algorithm}) the number of PCG iterations stays uniformly bounded, cf. Figure~\ref{fig:slit_not_nested}.
Finally, Figure~\ref{fig:slit_cond_number} shows the condition numbers for an artificial refinement towards the left end point and for Algorithm~\ref{algorithm} with $\lambda = 10^{-3}$ and $\theta=0.5$.

%
\begin{figure}
	\centering
	\raisebox{-0.5\height}{\includegraphics[width=0.38\textwidth]{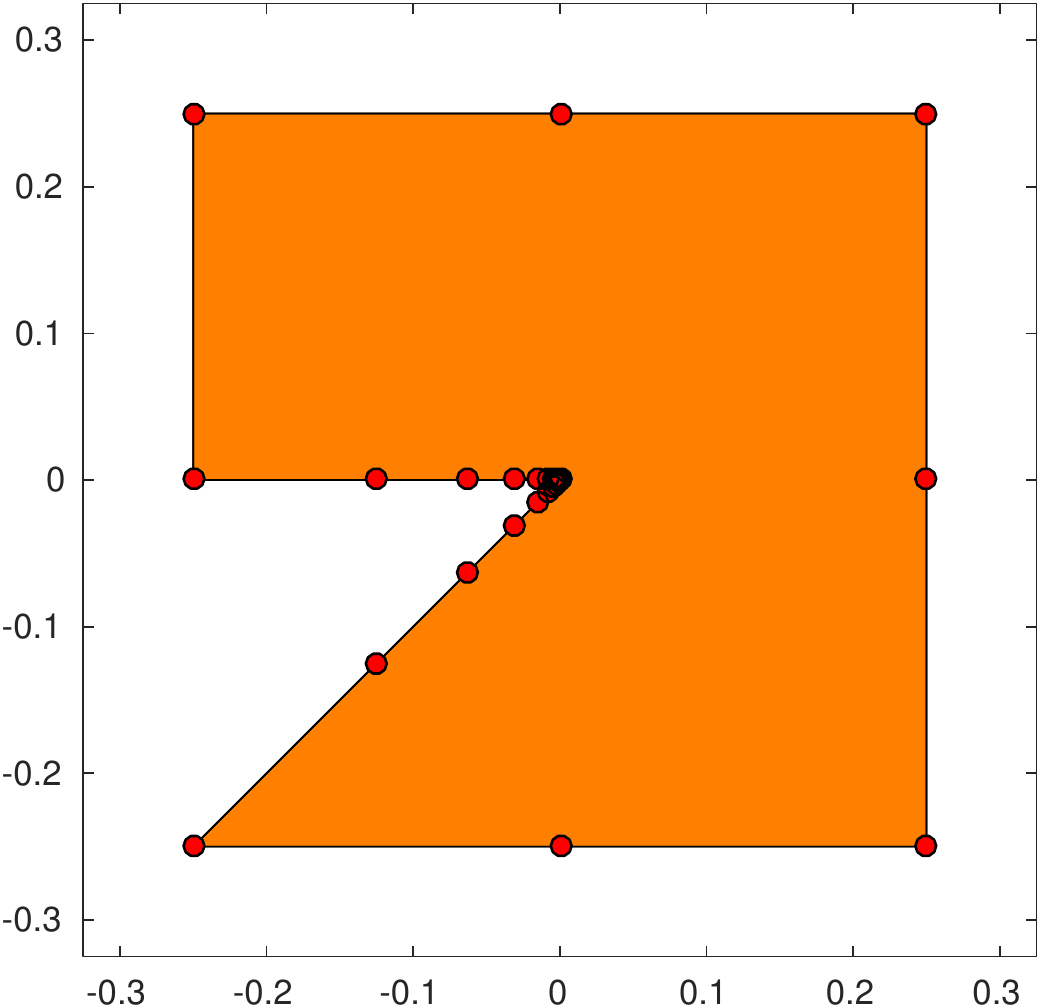}}
	\hfill
	\raisebox{-0.5\height}{\includegraphics[width=0.58\textwidth]{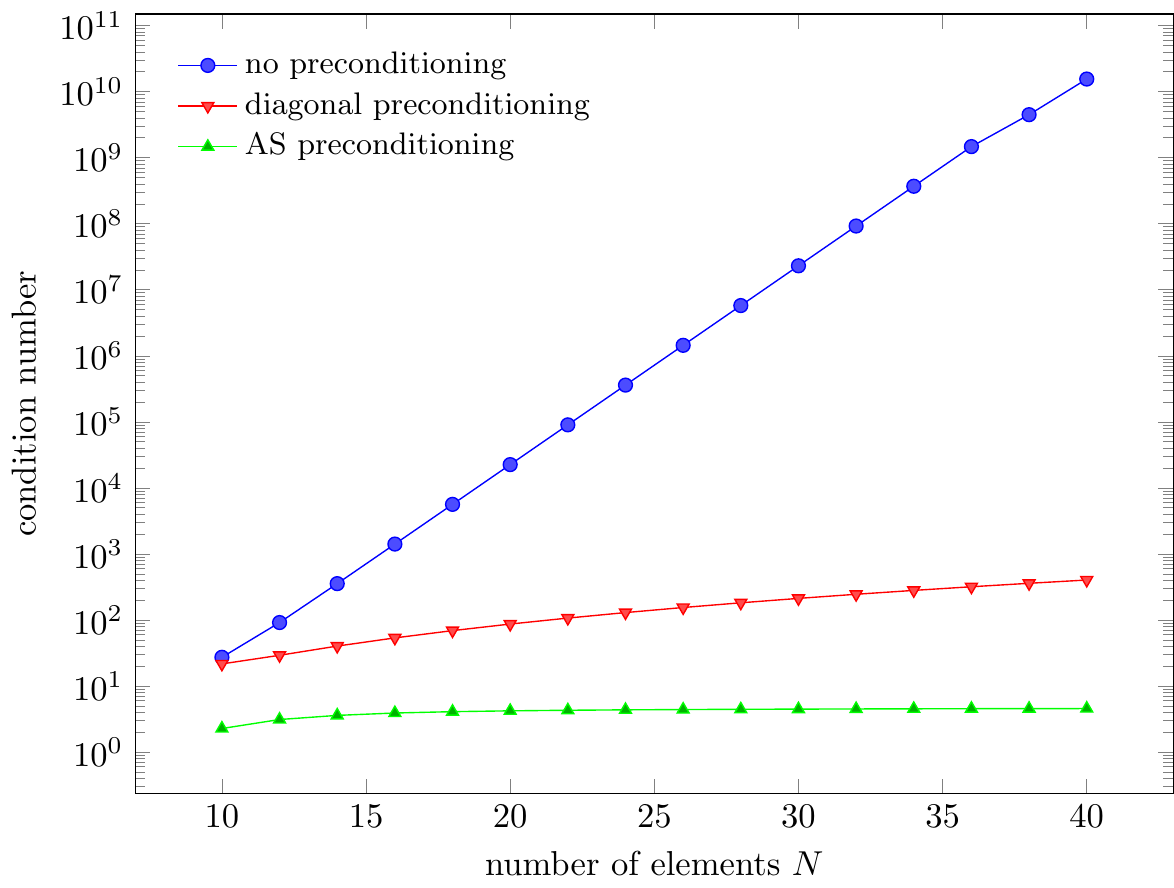}}\vspace{0.5cm}
	\raisebox{-0.5\height}{\includegraphics[width=0.38\textwidth]{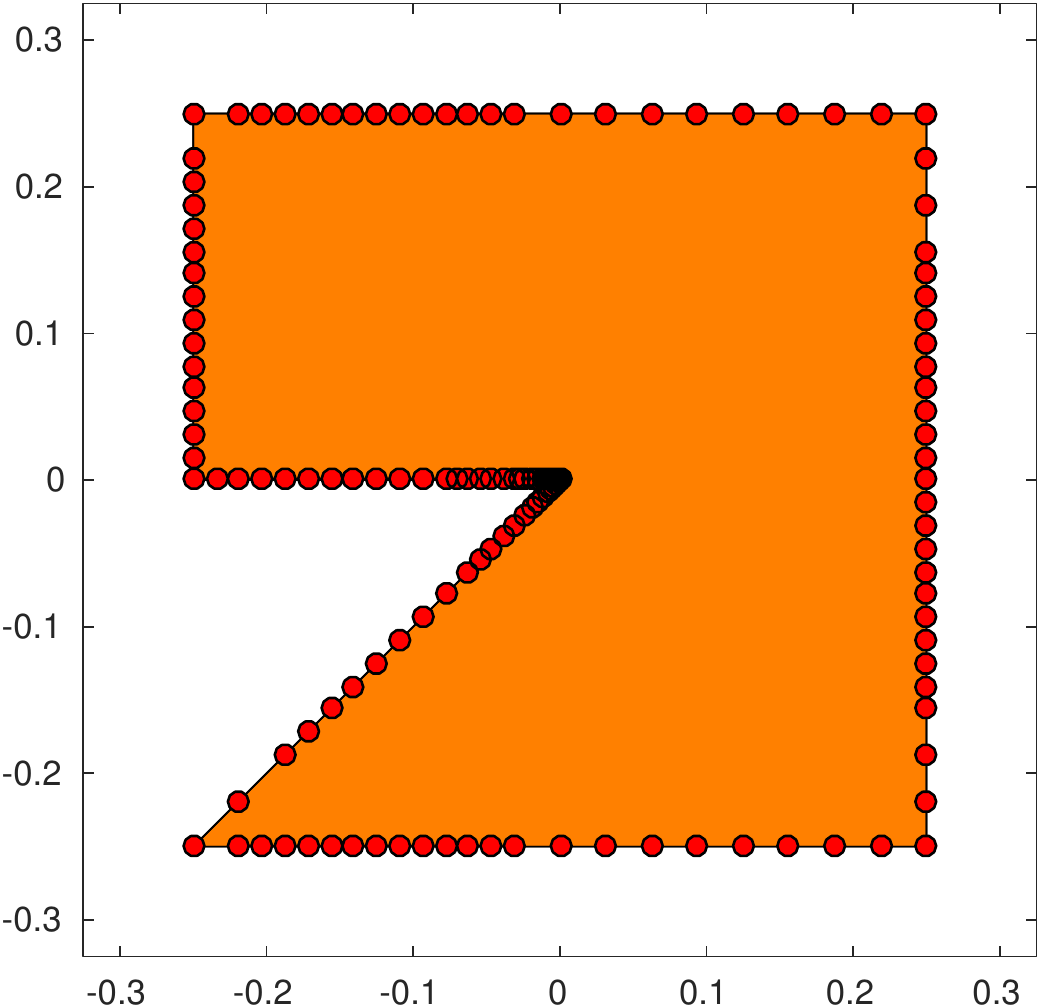}}
	\hfill
	\raisebox{-0.5\height}{\includegraphics[width=0.58\textwidth]{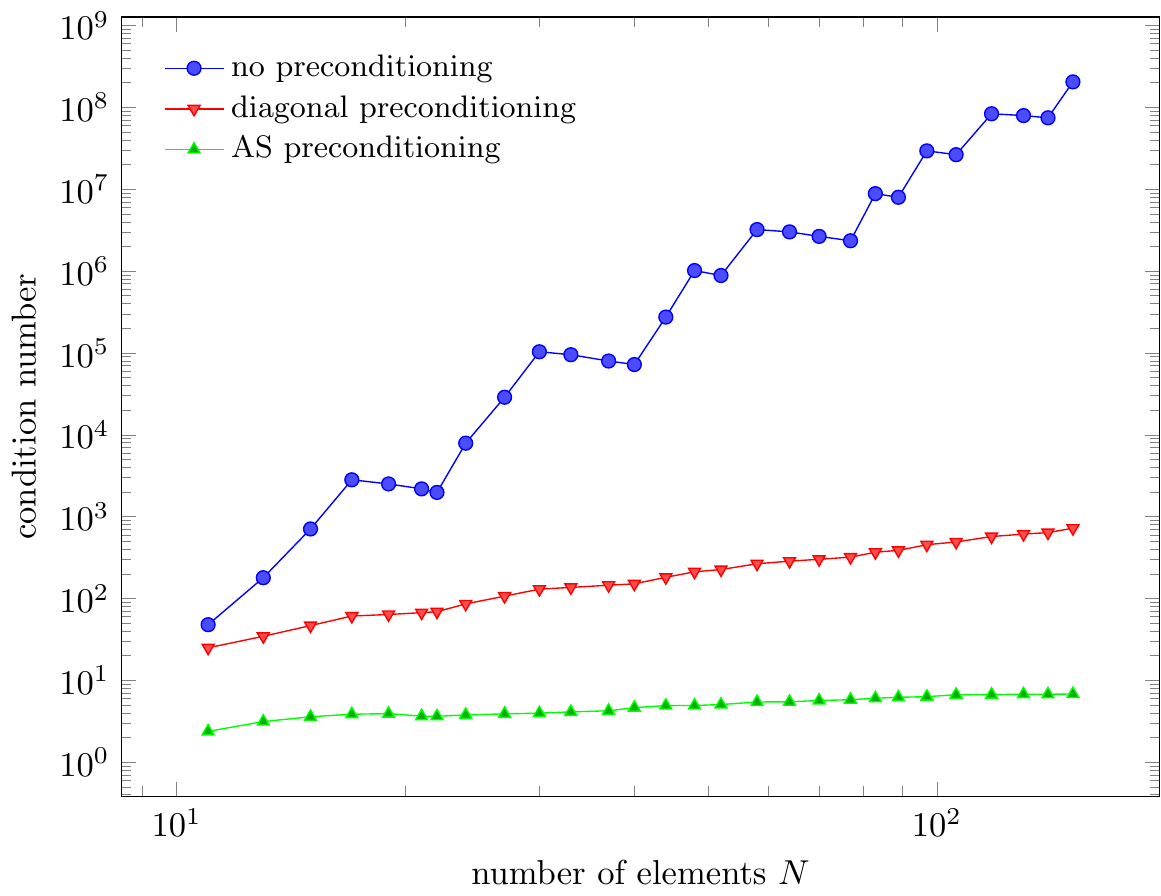}}	
	\caption{Example~\ref{subsection:zshape}: Condition numbers of the preconditioned and non-preconditioned Galerkin matrix for an artificial refinement towards the reentrant corner (top) and for Algorithm~\ref{algorithm} (bottom).}
\label{fig:z_shape_cond_number}
\end{figure}
\begin{figure}
	\centering
	\raisebox{-0.5\height}{\includegraphics[width=0.48\textwidth]{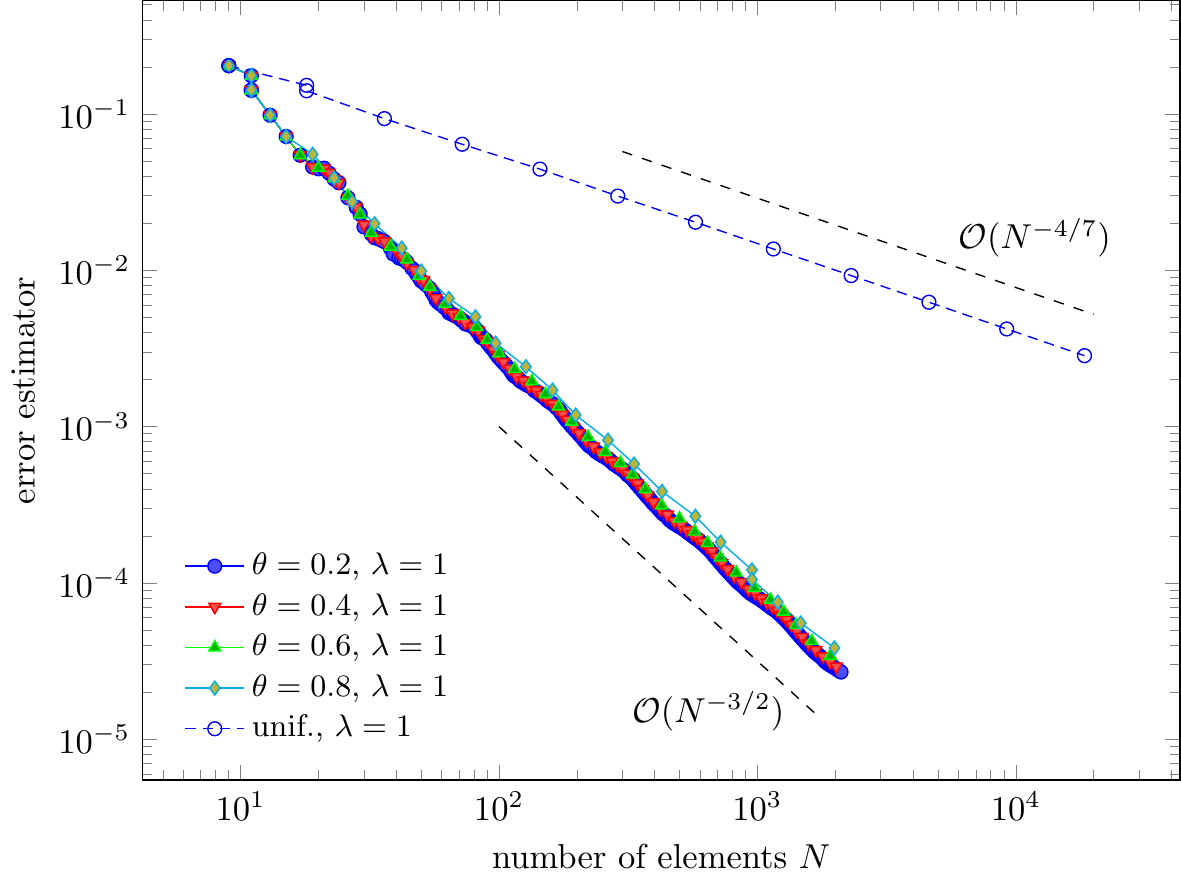}}
	\hfill
	\raisebox{-0.5\height}{\includegraphics[width=0.48\textwidth]{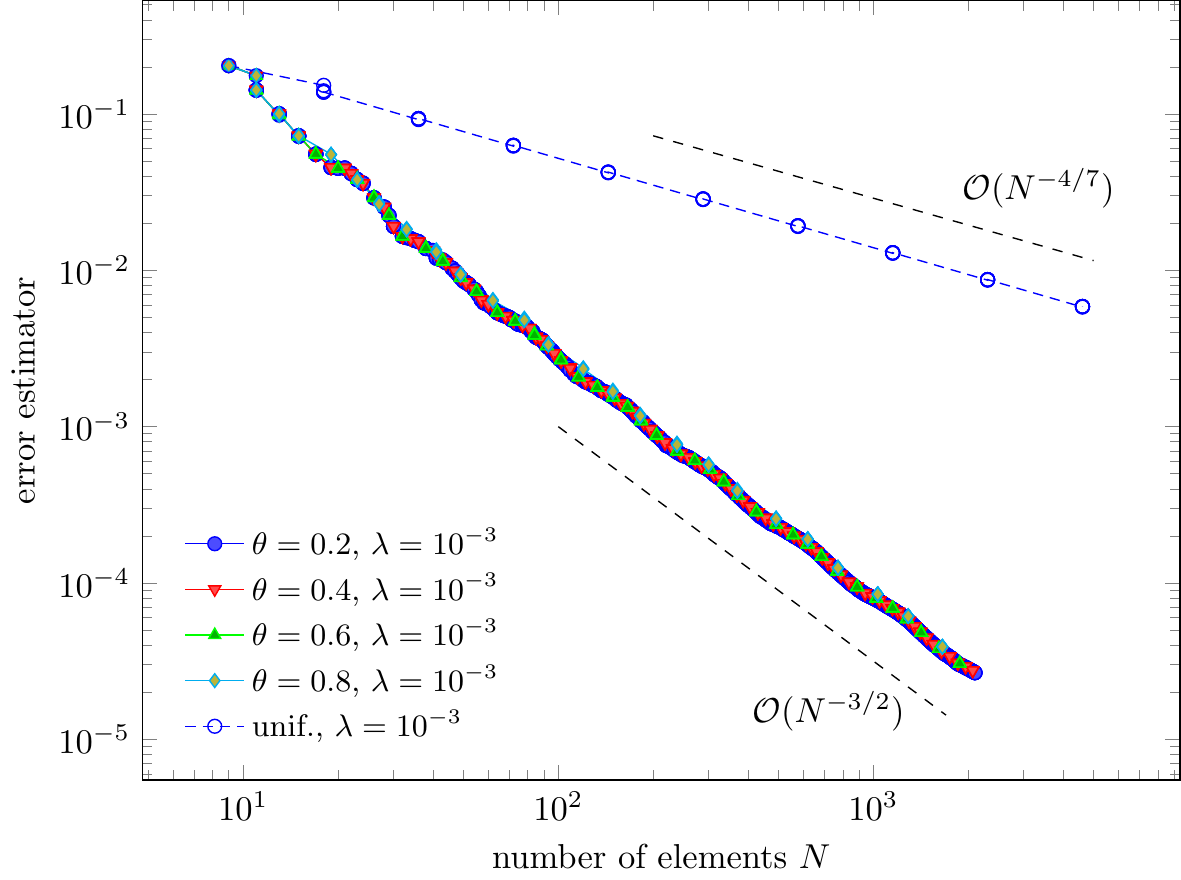}}\vspace{0.5cm}
	\raisebox{-0.5\height}{\includegraphics[width=0.48\textwidth]{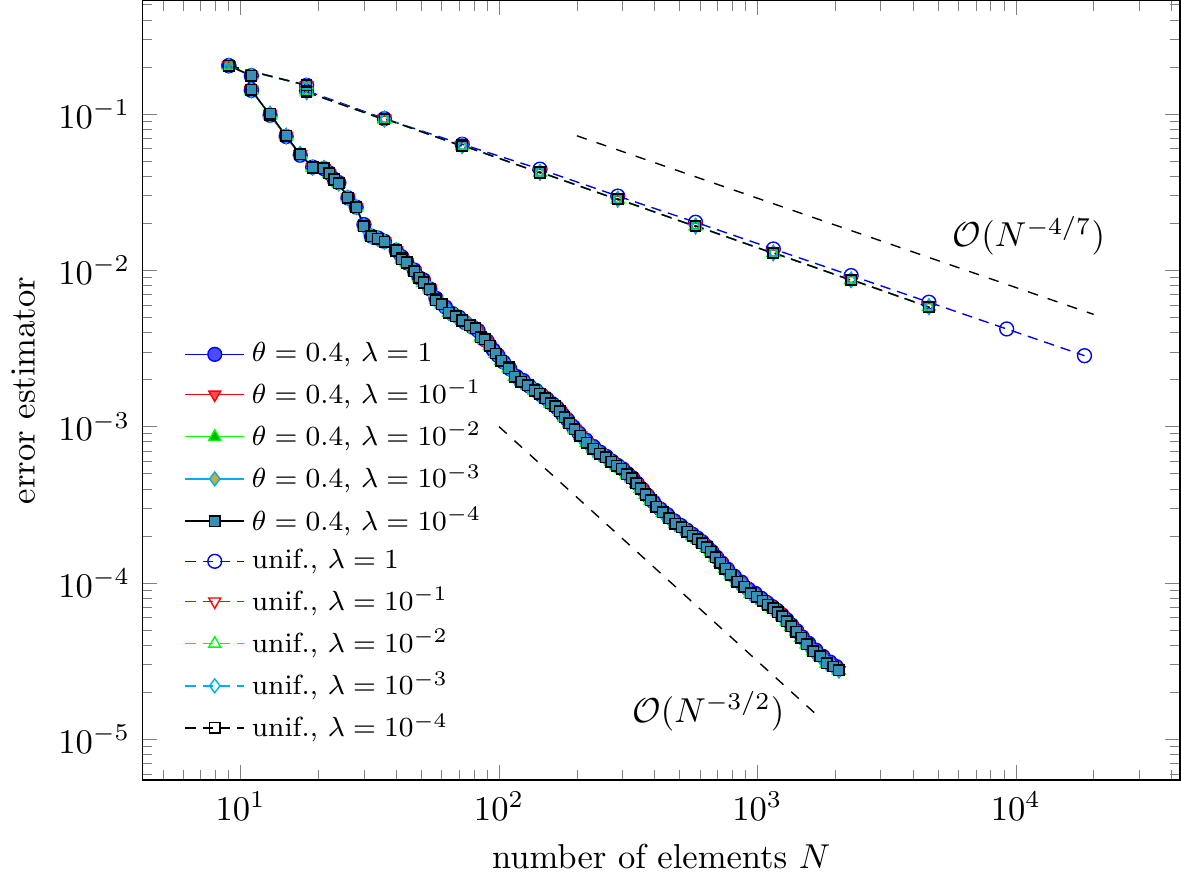}}
	\hfill
	\raisebox{-0.5\height}{\includegraphics[width=0.48\textwidth]{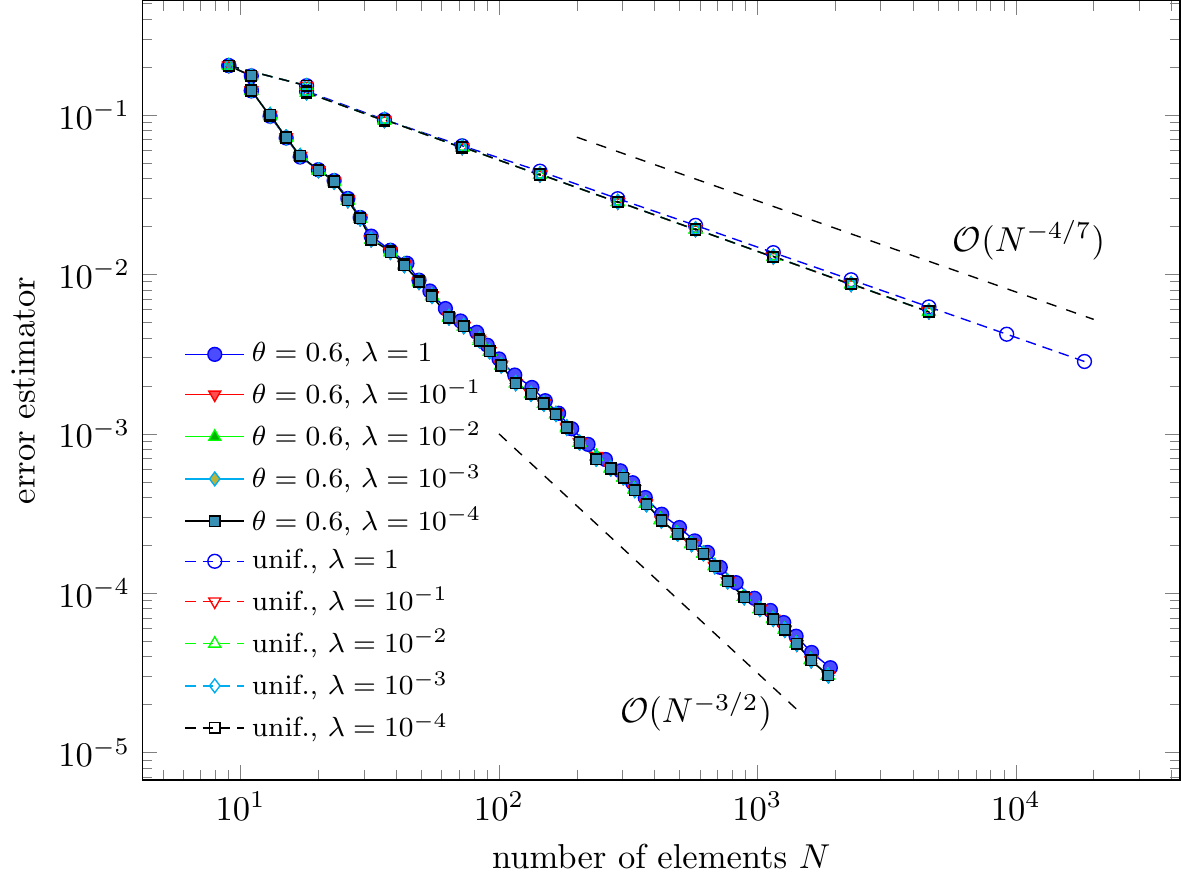}}
	\caption{Example~\ref{subsection:zshape}: Estimator convergence for fixed values of $\lambda$ (left: $\lambda=1$, right: $\lambda=10^{-3}$) and $\theta\in\{0.2,0.4,0.6,0.8\}$ (top) and for fixed values of $\theta$ (left: $\theta=0.4$, right: $\theta=0.6$) and $\lambda\in\{1,10^{-1},\ldots,10^{-4}\}$ (bottom).}
\label{fig:z_shape_conv}
\end{figure}
\begin{figure}
  	\raisebox{-0.5\height}{\includegraphics[width=0.48\textwidth]{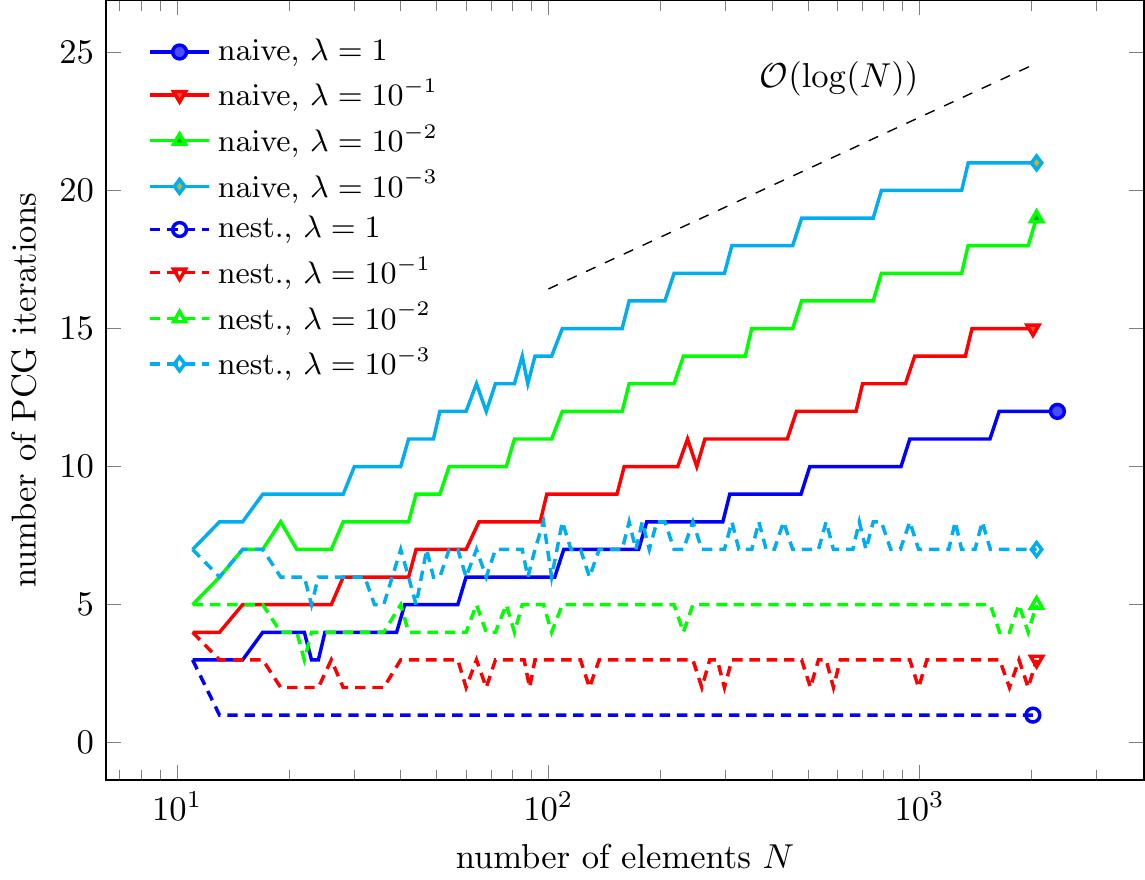}}
	\hfill
  	\raisebox{-0.5\height}{\includegraphics[width=0.48\textwidth]{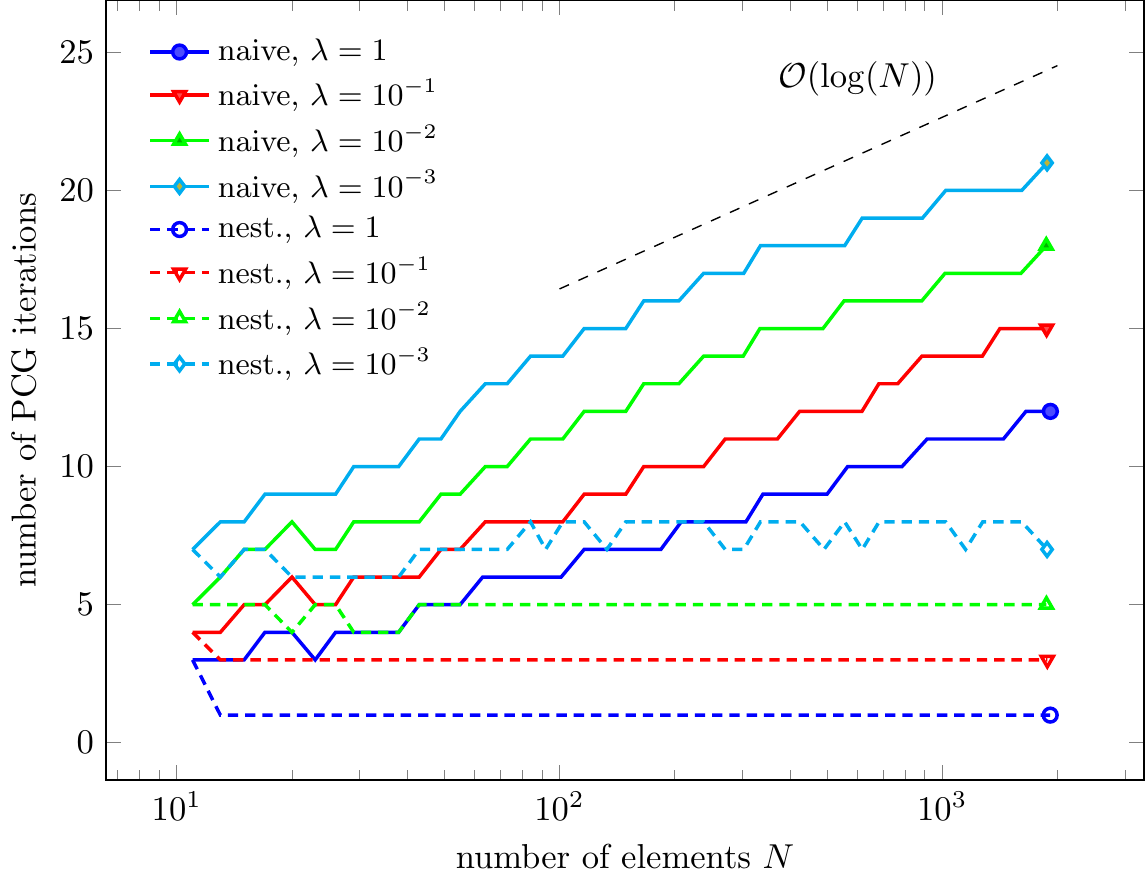}}
  	\caption{Example~\ref{subsection:zshape}: Number of PCG iterations in Algorithm~\ref{algorithm} for nested iteration (dashed lines), i.e., $\pphi_{(j+1)0}:=\pphi_{j\k}$ in Step~(vi), and naive initial guess (solid lines), i.e., $\pphi_{(j+1)0}:=0$. We compare fixed values of $\theta$ (left: $\theta=0.4$, right: $\theta=0.6$) and $\lambda=\in\{1,10^{-1},\ldots,10^{-3}\}$.}
  	\label{fig:z_shape_not_nested}
\end{figure}
%
\subsection{Z-shaped domain in 2D}
\label{subsection:zshape}
\noindent Let $\Gamma:=\partial\Omega$ be the boundary of the Z-shaped domain with reentrant corner at the origin $(0,0)$, cf.\ Figure~\ref{fig:z_shape_cond_number}.
The right-hand side is given by $f=(K+1/2)g$ with the double-layer operator $K\colon H^{1/2}(\Gamma)\to H^{1/2}(\Gamma)$. We note that the weakly-singular integral equation~\eqref{eq:slp} is then equivalent to the Dirichlet problem
\begin{align}\label{eq:dbp}
\begin{split}
-\Delta u &= 0\quad\textrm{ in }\Omega
\qquad\text{subject to}\qquad
u = g\quad\textrm{ on }\Gamma.
\end{split}
\end{align}
We prescribe the exact solution in 2D polar coordinates as
\begin{align}
u(x)=r^{4/7}\cos(4\,\xi/7)\quad\text{with }x=r(\cos \xi,\sin \xi).
\end{align}
Then, $u$ admits a generic singularity at the reentrant corner. The exact solution $\pphi^\exact$ of~\eqref{eq:slp} is just the normal derivative of the solution $u$.

We expect a convergence order of $\OO(N^{-4/7})$ for uniform mesh-refinement, and the optimal rate $\OO(N^{-3/2})$ for the adaptive strategy, which is seen in Figure~\ref{fig:z_shape_conv} for different values of $\theta$ and $\lambda$.
A naive initial guess in Step~(vi) of Algorithm~\ref{algorithm} (i.e., if $\pphi_{(j+1)0}:=0$) leads to a logarithmical growth of the number of PCG iterations, whereas for nested iteration $\pphi_{(j+1)0}:=\pphi_{j\k}$ the number of PCG iterations stays uniformly bounded, cf. Figure~\ref{fig:z_shape_not_nested}.
Figure~\ref{fig:z_shape_cond_number} shows the condition numbers for an artificial refinement towards the reentrant corner  as well as the condition numbers for Algorithm~\ref{algorithm} with $\lambda = 10^{-3}$ and $\theta=0.5$.

\begin{figure}
	\centering
	\begin{minipage}{.3\textwidth}
	\centering
	\raisebox{-0.5\height}{\includegraphics[width=\textwidth]{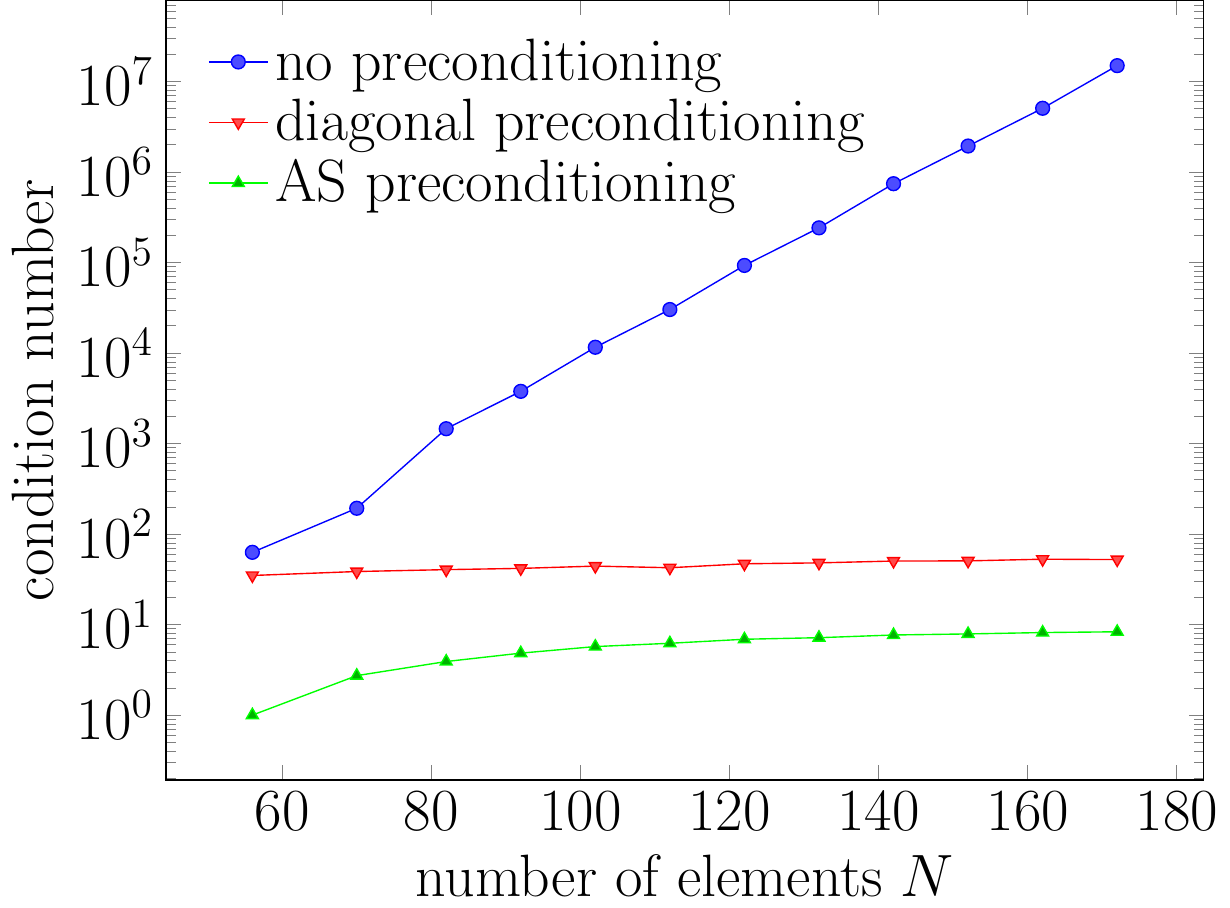}}\\%
	\raisebox{-0.5\height}{\adjincludegraphics[width=.9\textwidth,Clip={.15\width} {.10\height} {0.15\width} {.1\height}]{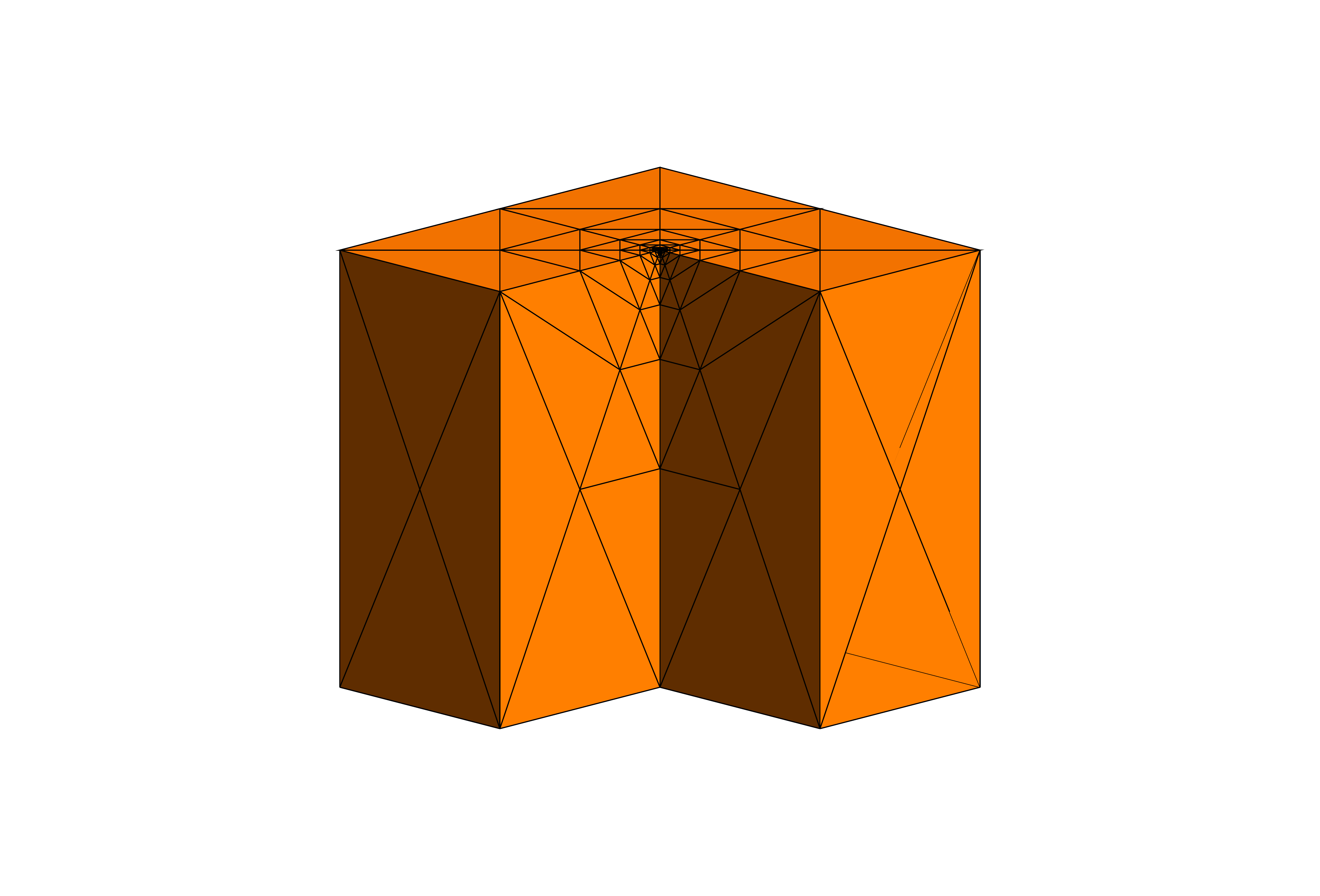}}\hspace*{-5mm}%
	\end{minipage}%
	\hfill%
	\begin{minipage}{.3\textwidth}
	\centering
	\raisebox{-0.5\height}{\includegraphics[width=\textwidth]{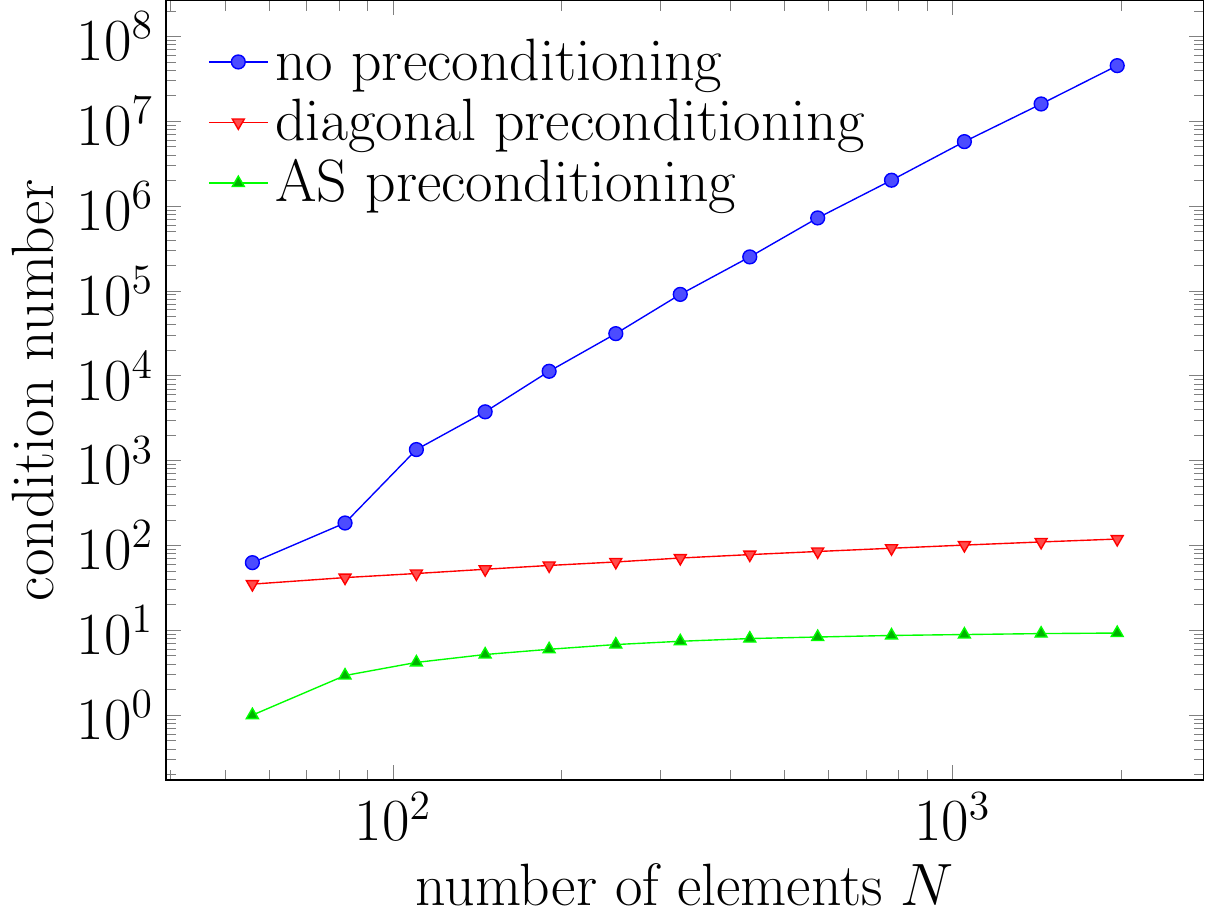}}\\%
	\raisebox{-0.5\height}{\adjincludegraphics[width=.9\textwidth,Clip={.15\width} {.10\height} {0.15\width} {.1\height}]{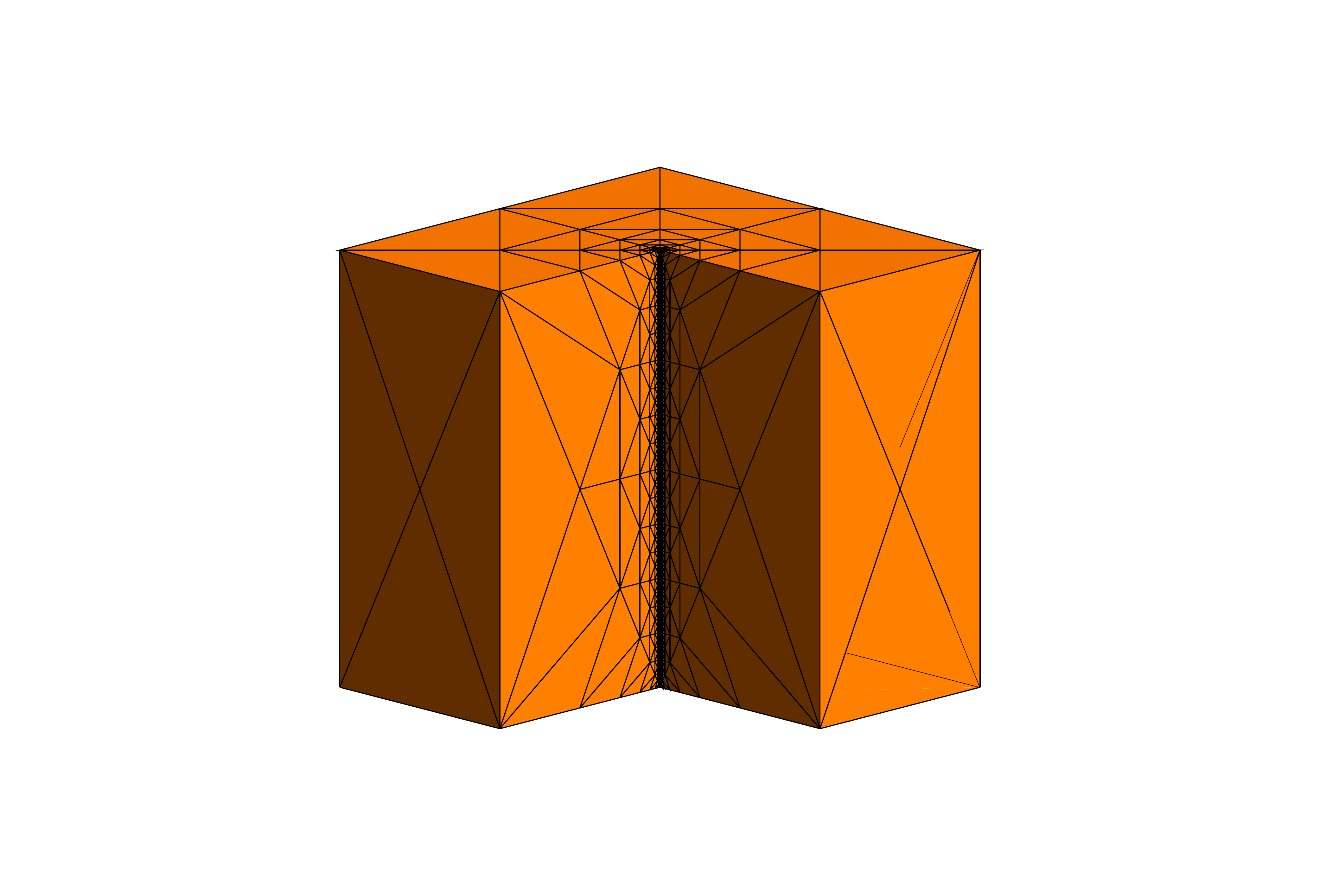}}\hspace*{-5mm}
	\end{minipage}%
	\hfill%
	\begin{minipage}{.3\textwidth}
	\centering
	\raisebox{-0.5\height}{\includegraphics[width=\textwidth]{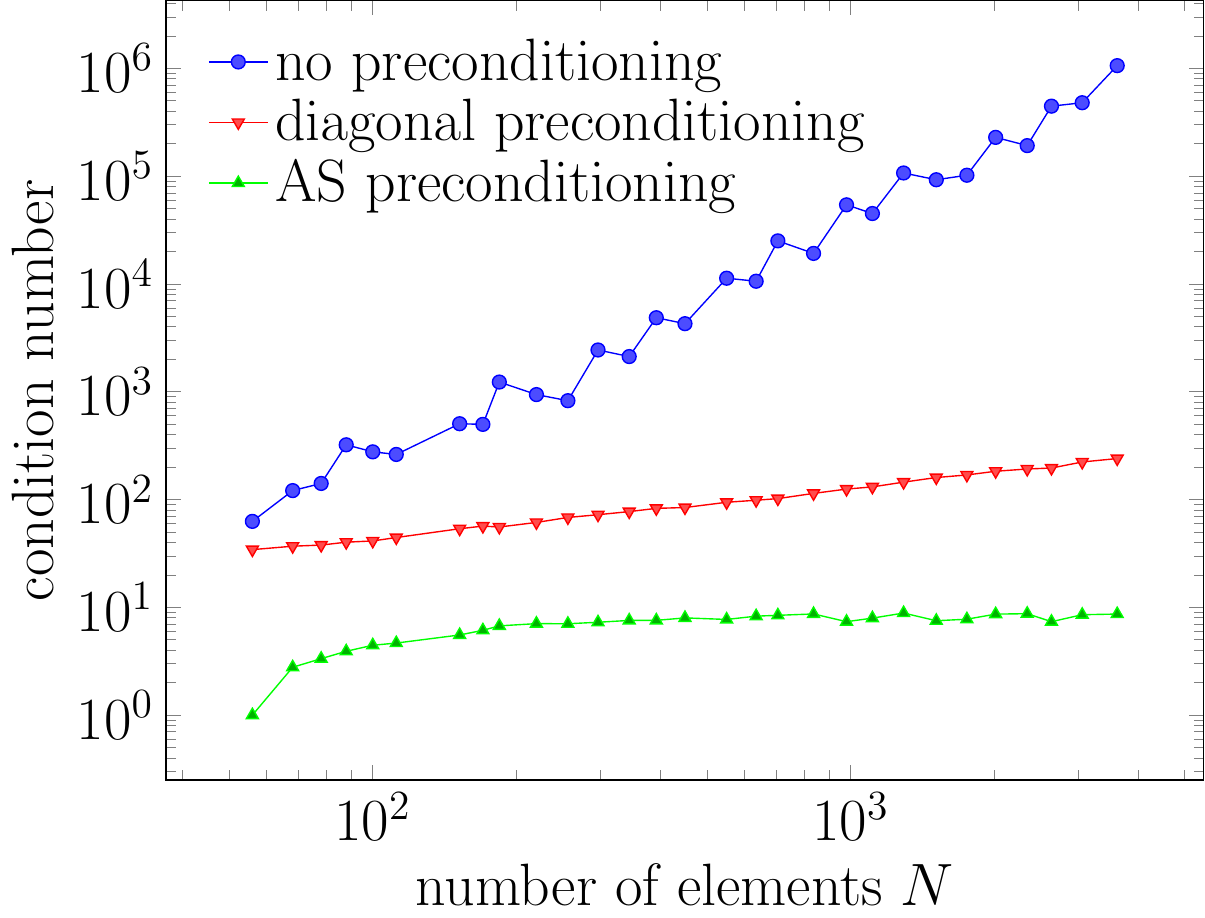}}\\%
	\raisebox{-0.5\height}{\adjincludegraphics[width=.9\textwidth,Clip={.15\width} {.10\height} {0.15\width} {.1\height}]{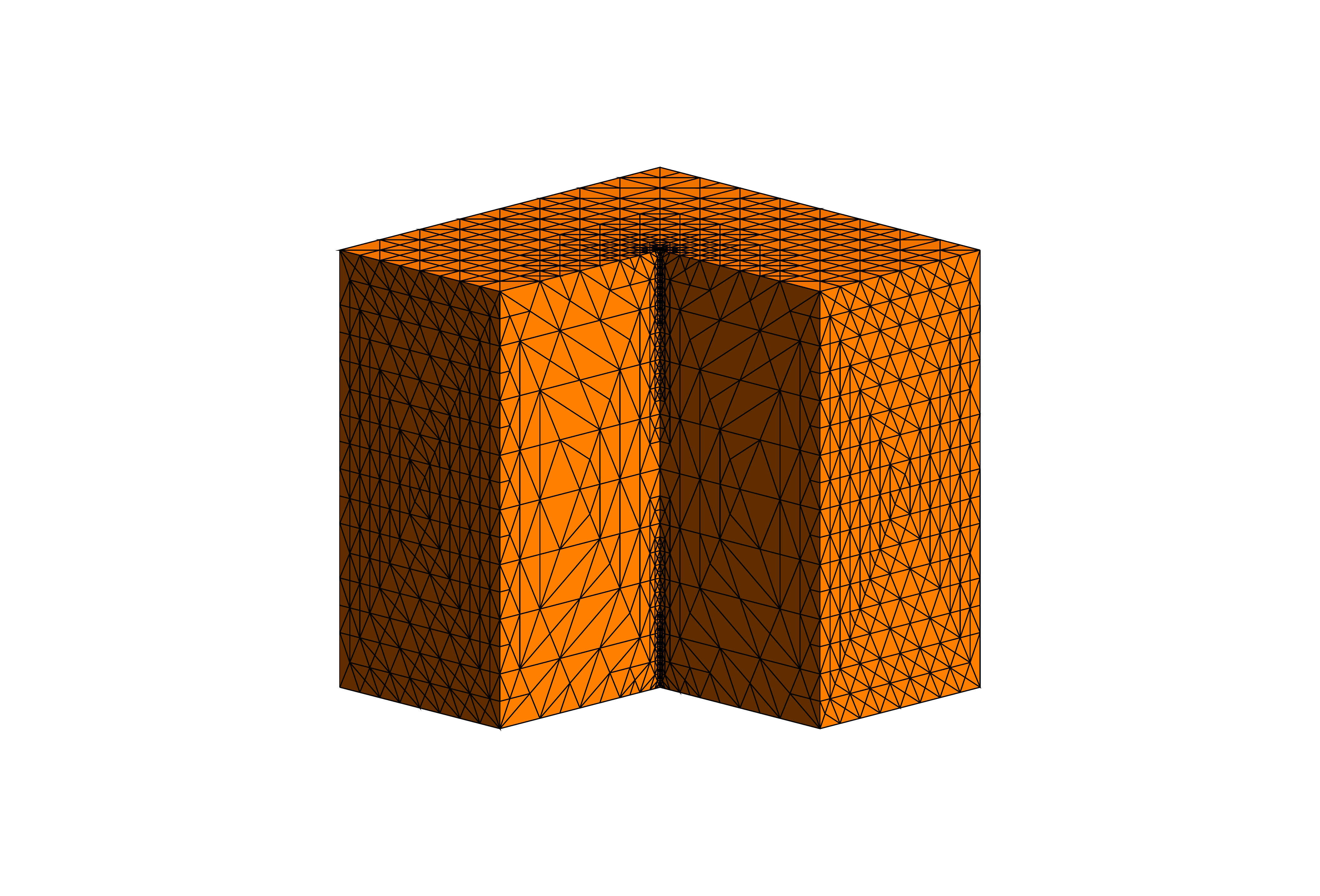}}\hspace*{-5mm}
	\end{minipage}%
	\caption{Example~\ref{subsection:lshape}: Condition numbers of the preconditioned and non-preconditioned Galerkin matrix for an artificial refinement towards one reentrant corner (left) or edge (middle), and for Algorithm~\ref{algorithm} (right).}
\label{fig:lshape_cond_number}
\end{figure}

\begin{figure}
	\centering
	\raisebox{-0.5\height}{\includegraphics[width=0.48\textwidth]{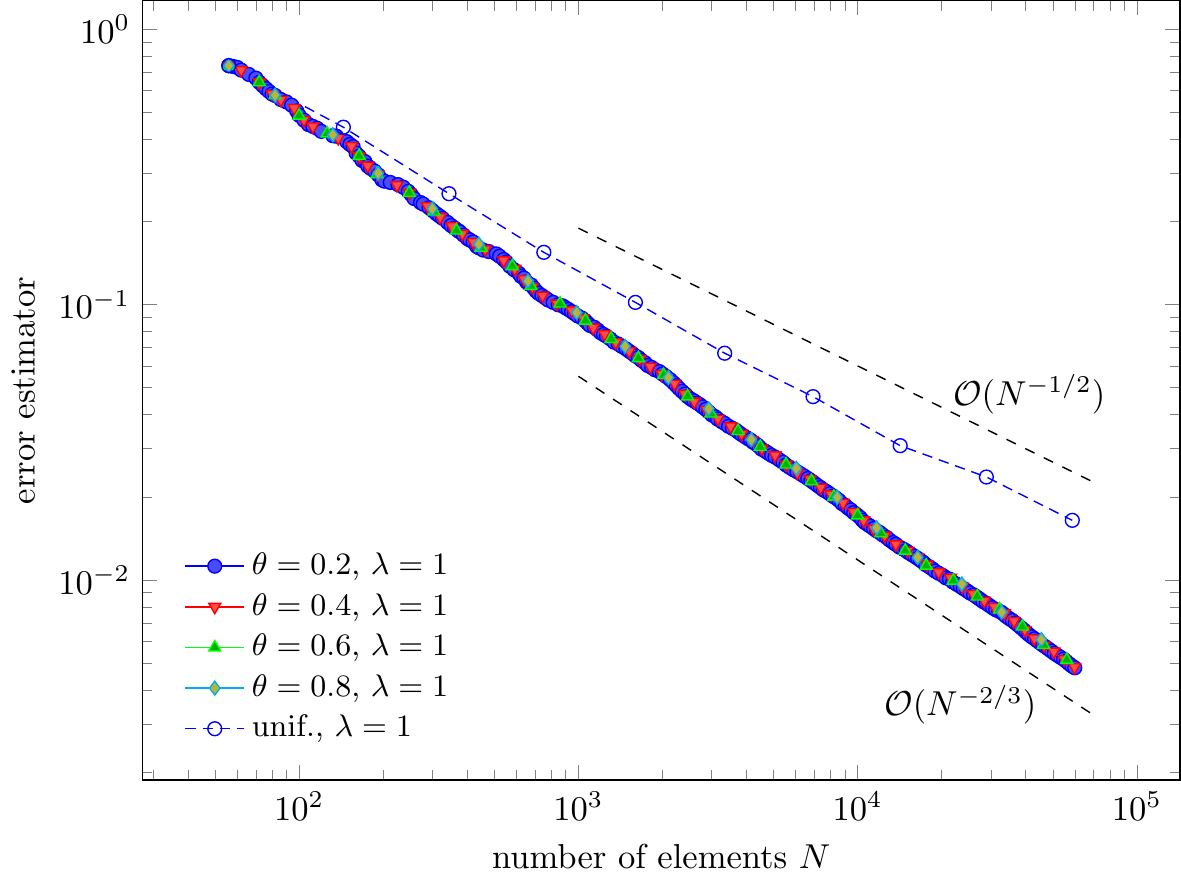}}
	\hfill
	\raisebox{-0.5\height}{\includegraphics[width=0.48\textwidth]{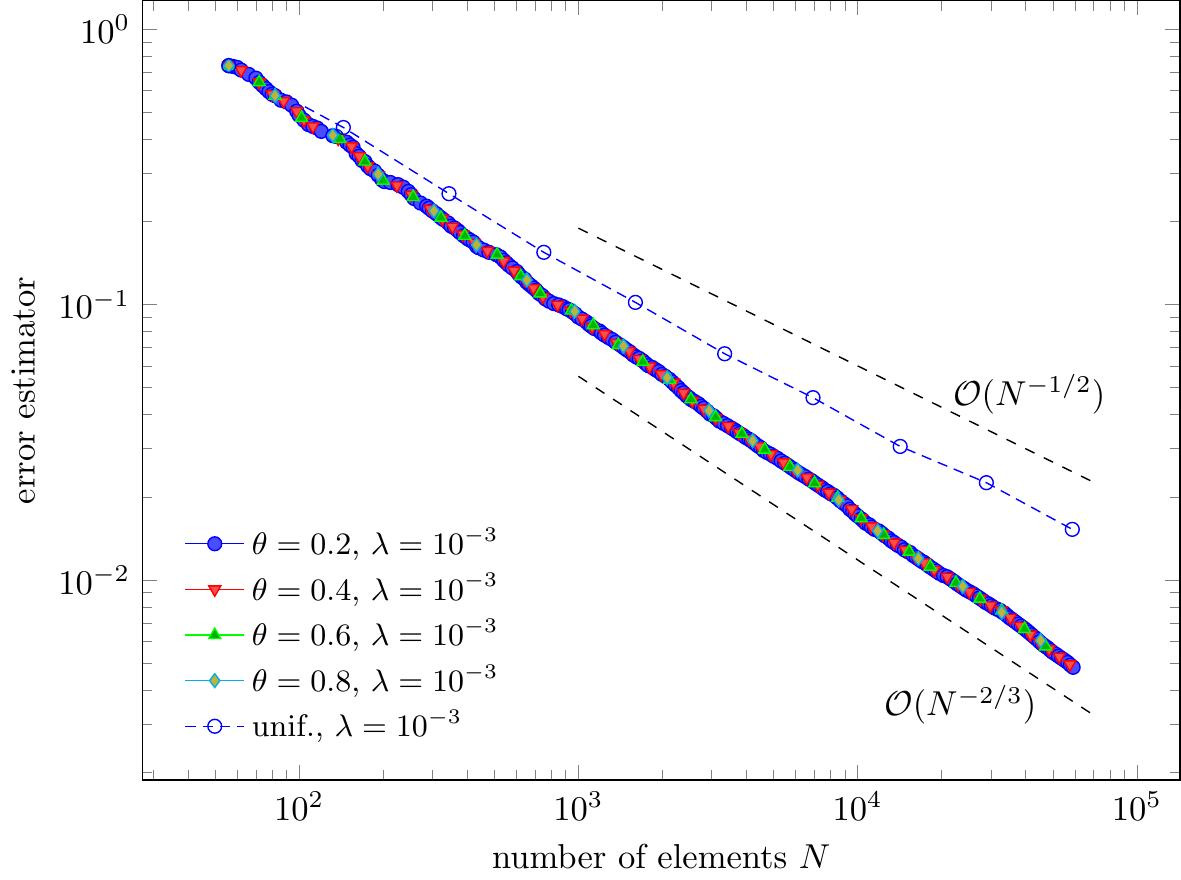}}\vspace{0.5cm}
	\raisebox{-0.5\height}{\includegraphics[width=0.48\textwidth]{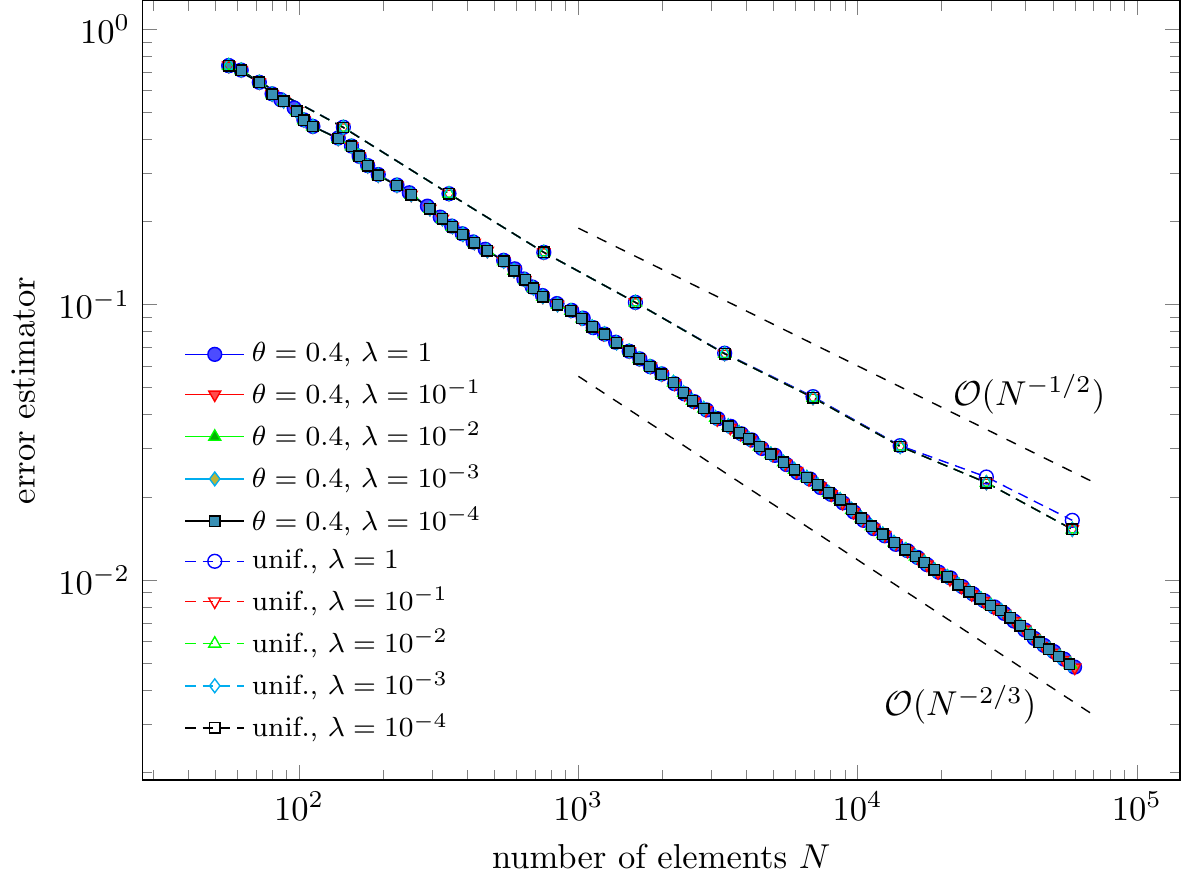}}
	\hfill
	\raisebox{-0.5\height}{\includegraphics[width=0.48\textwidth]{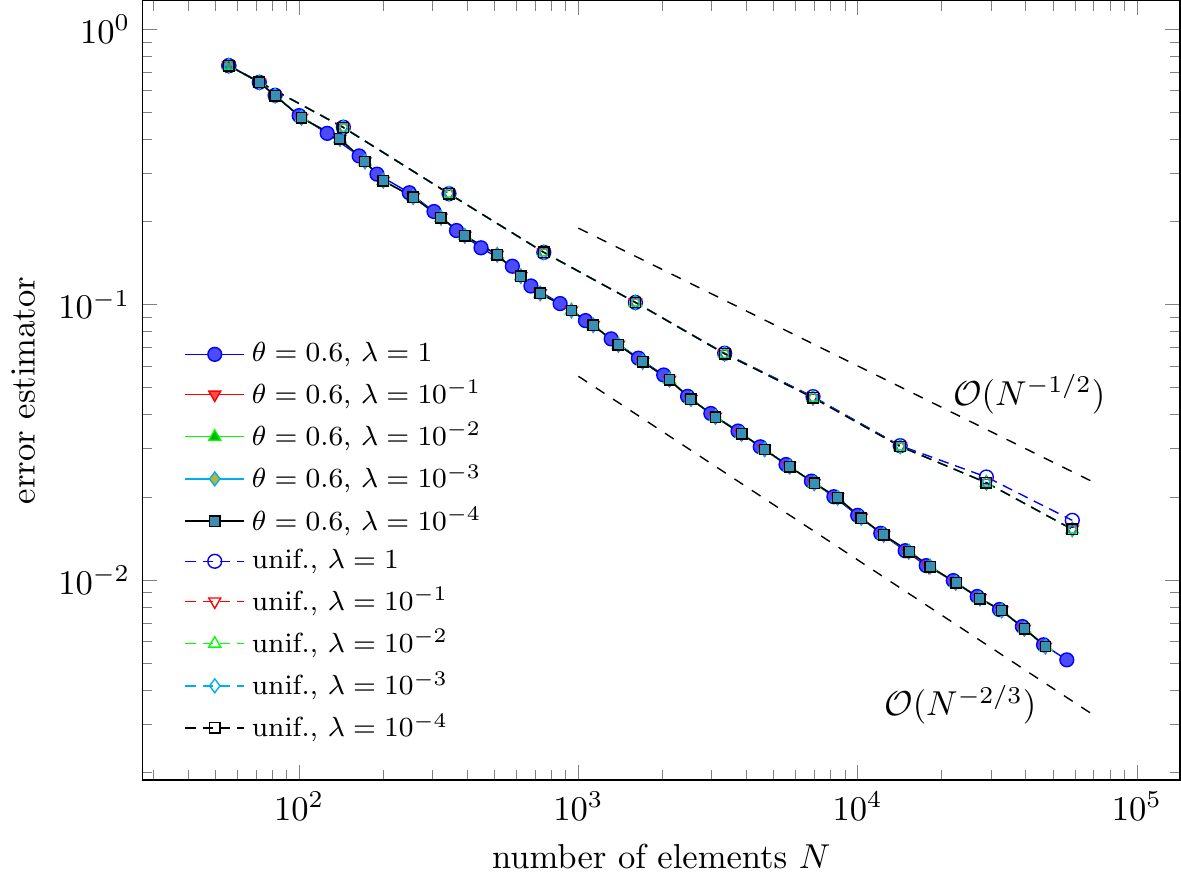}}
	\caption{Example~\ref{subsection:lshape}: Estimator convergence for fixed values of $\lambda$ (left: $\lambda=1$, right: $\lambda=10^{-3}$) and $\theta\in\{0.2,0.4,0.6,0.8\}$ (top) and for fixed values of $\theta$ (left: $\theta=0.4$, right: $\theta=0.6$) and $\lambda\in\{1,10^{-1},\ldots,10^{-4}\}$ (bottom).}
\label{fig:l_shape_conv}
\end{figure}
%
\subsection{L-shaped domain in 3D}
\label{subsection:lshape}
\noindent 
Let $\Gamma:=\partial\Omega$ be the boundary of the L-shaped domain 
$\Omega = (-1,1)^3 \backslash ([-1,0] \times [0,1] \times[-1,1])$, cf.\ Figure~\ref{fig:lshape_cond_number}.
The right-hand side is given by $f=(K+1/2)g$ with the double-layer operator $K\colon H^{1/2}(\Gamma)\to H^{1/2}(\Gamma)$. Again, the weakly-singular integral equation~\eqref{eq:slp} is then equivalent to the Dirichlet problem~\eqref{eq:dbp}.
We prescribe the exact solution in 3D cylindrical coordinates as
\begin{align}
u(x)=z\,r^{2/3}\cos(2/3 \, (\xi-\pi/4) )\quad\text{with }x=(r\cos \xi,r\sin \xi,z).
\end{align}
Note that $u$ admits a singularity along the reentrant edge. The exact solution $\pphi^\exact$ of \eqref{eq:slp} is just the normal derivative of the exact solution $u$.

In Figure~\ref{fig:l_shape_conv}, we compare Algorithm~\ref{algorithm} with different values for $\theta$ and $\lambda$ to uniform mesh-refinement.
Uniform mesh-refinement leads only to a reduced rate of $\OO(N^{-1/2})$, while adaptivity, independently of $\theta$ and $\lambda$, leads to the improved rate of approximately $\OO(N^{-2/3})$. While one would expect $\OO(N^{-3/4})$ for smooth exact solutions $\phi^\exact$, this would require anisotropic elements along the reentrant edge for the present solution $\phi^\exact=\partial_n u$. Since NVB guarantees uniform $\gamma$-shape regularity of the meshes, the latter is not possible and hence leads to a reduced optimal rate.
Finally, Figure~\ref{fig:lshape_cond_number} shows the condition numbers for (diagonal or additive Schwarz) preconditioning and no preconditioning for artificial refinements towards one reentrant corner or the reentrant edge as well as the condition numbers of the matrices arising from Algorithm~\ref{algorithm} with $\lambda = 10^{-3}$ and $\theta=0.5$.

\begin{figure}
	\centering
	\begin{minipage}{.3\textwidth}
	\centering
	\raisebox{-0.5\height}{\includegraphics[width=\textwidth]{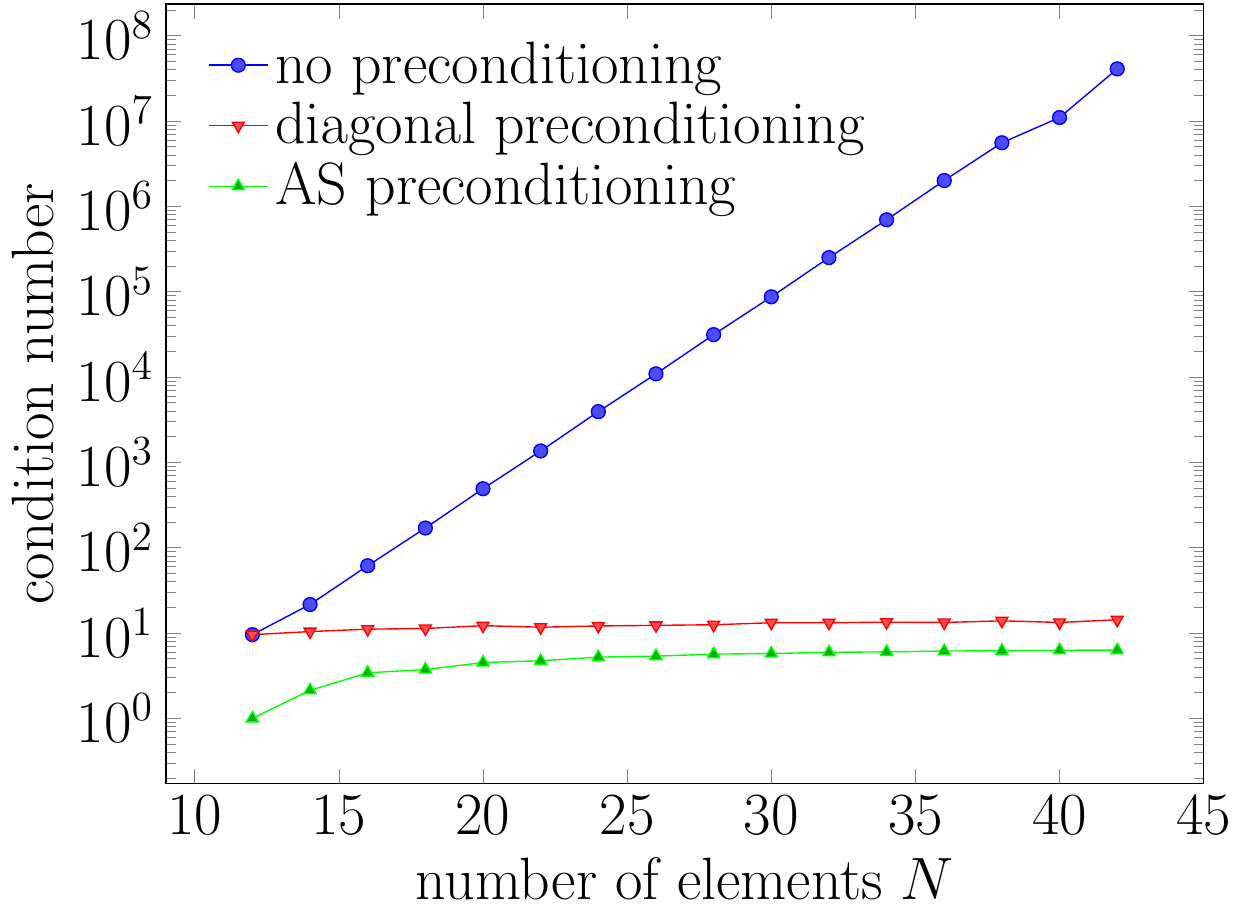}}\\%
	\raisebox{-0.5\height}{\adjincludegraphics[width=.9\textwidth,Clip={.25\width} {.0\height} {0.25\width} {.0\height}]{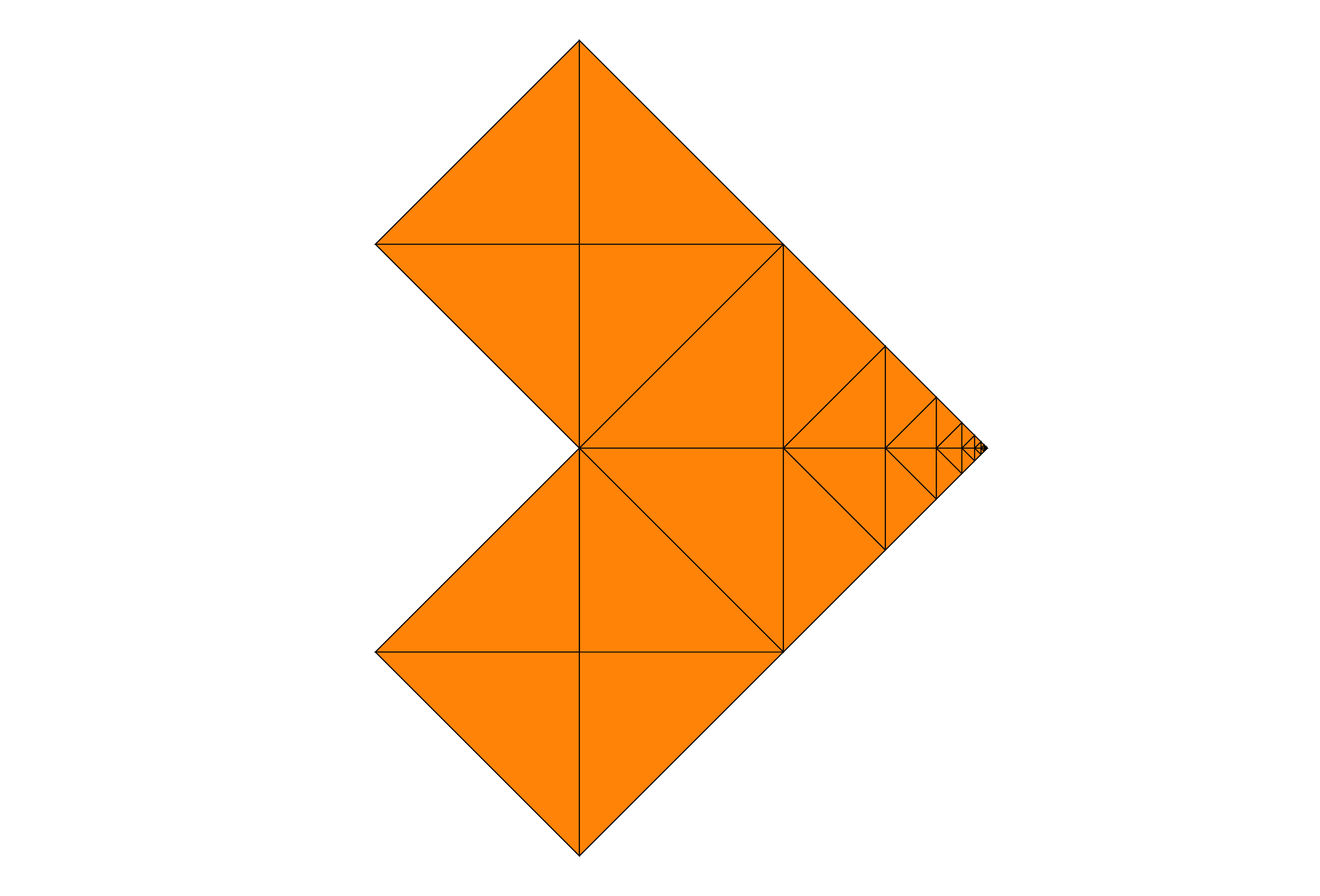}}\hspace*{-5mm}%
	\end{minipage}%
	\hfill%
	\begin{minipage}{.3\textwidth}
	\centering
	\raisebox{-0.5\height}{\includegraphics[width=\textwidth]{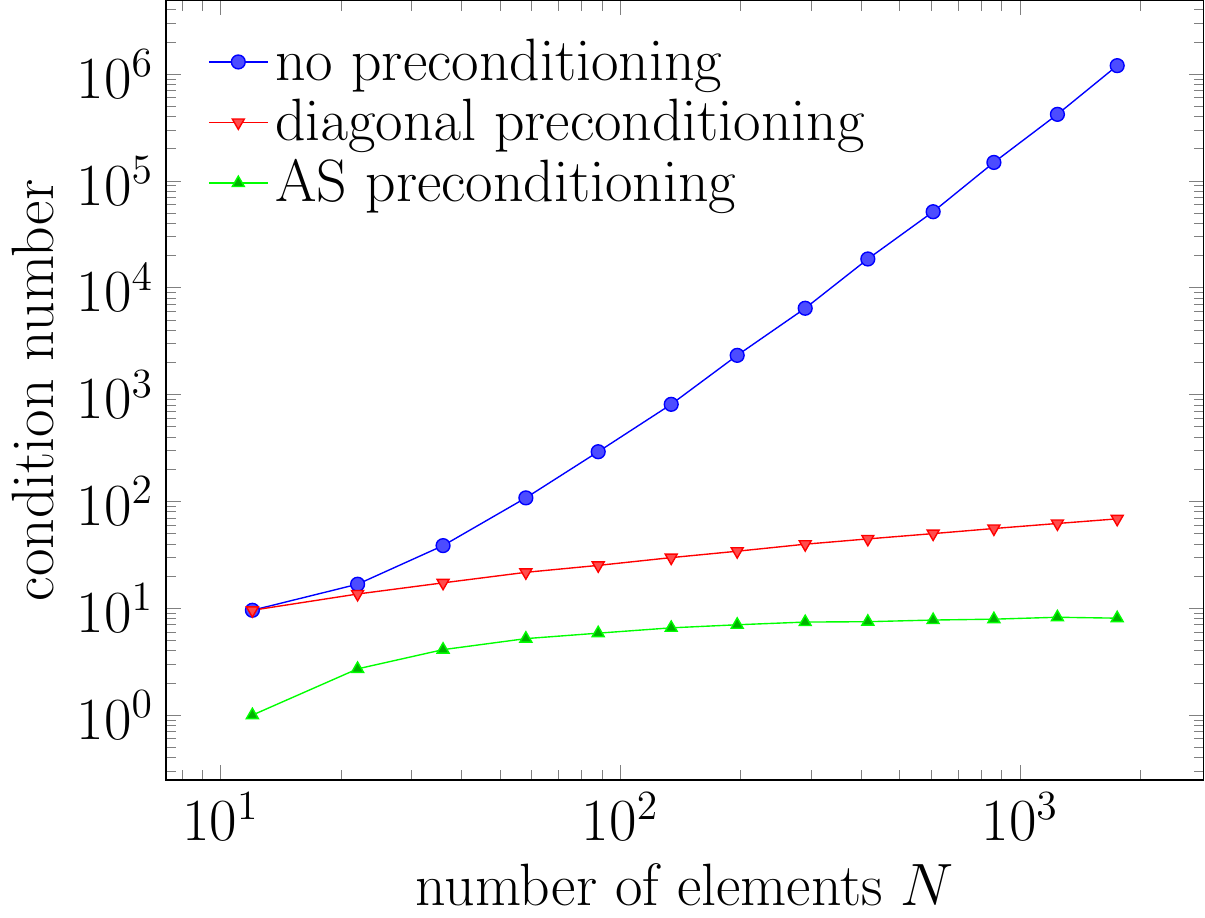}}\\%
	\raisebox{-0.5\height}{\adjincludegraphics[width=.9\textwidth,Clip={.25\width} {.0\height} {0.25\width} {.0\height}]{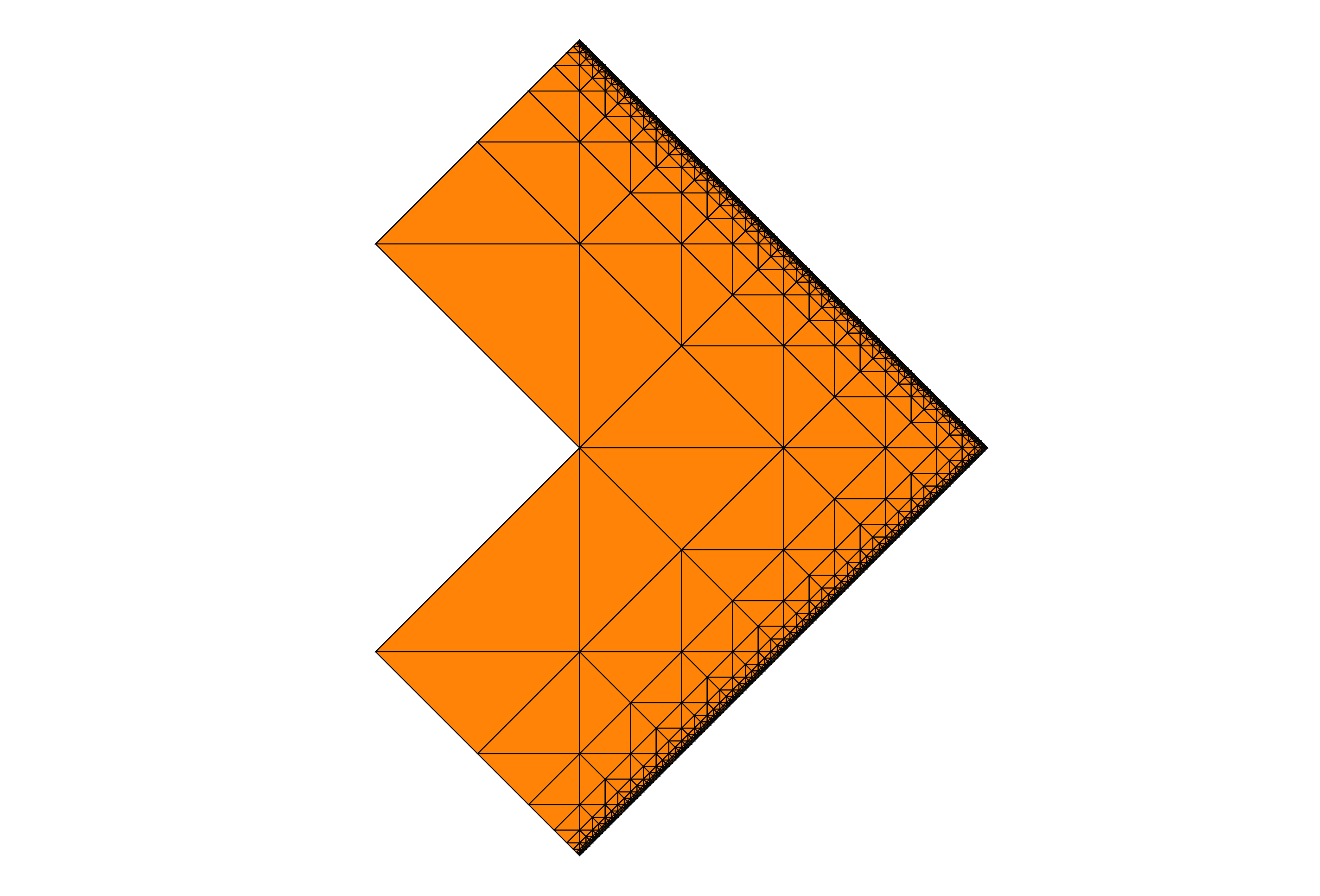}}\hspace*{-5mm}
	\end{minipage}%
	\hfill%
	\begin{minipage}{.3\textwidth}
	\centering
	\raisebox{-0.5\height}{\includegraphics[width=\textwidth]{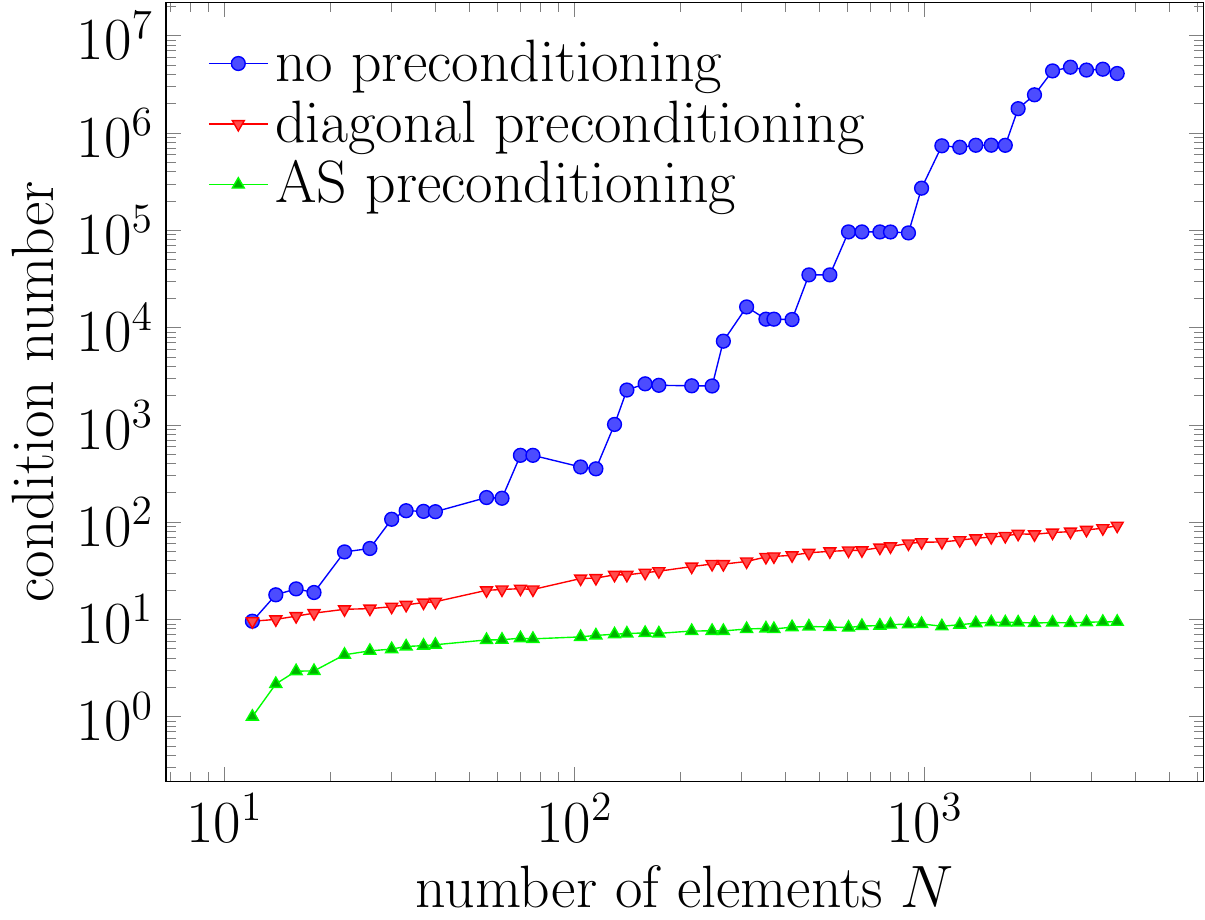}}\\%
	\raisebox{-0.5\height}{\adjincludegraphics[width=.9\textwidth,Clip={.25\width} {.0\height} {0.25\width} {.0\height}]{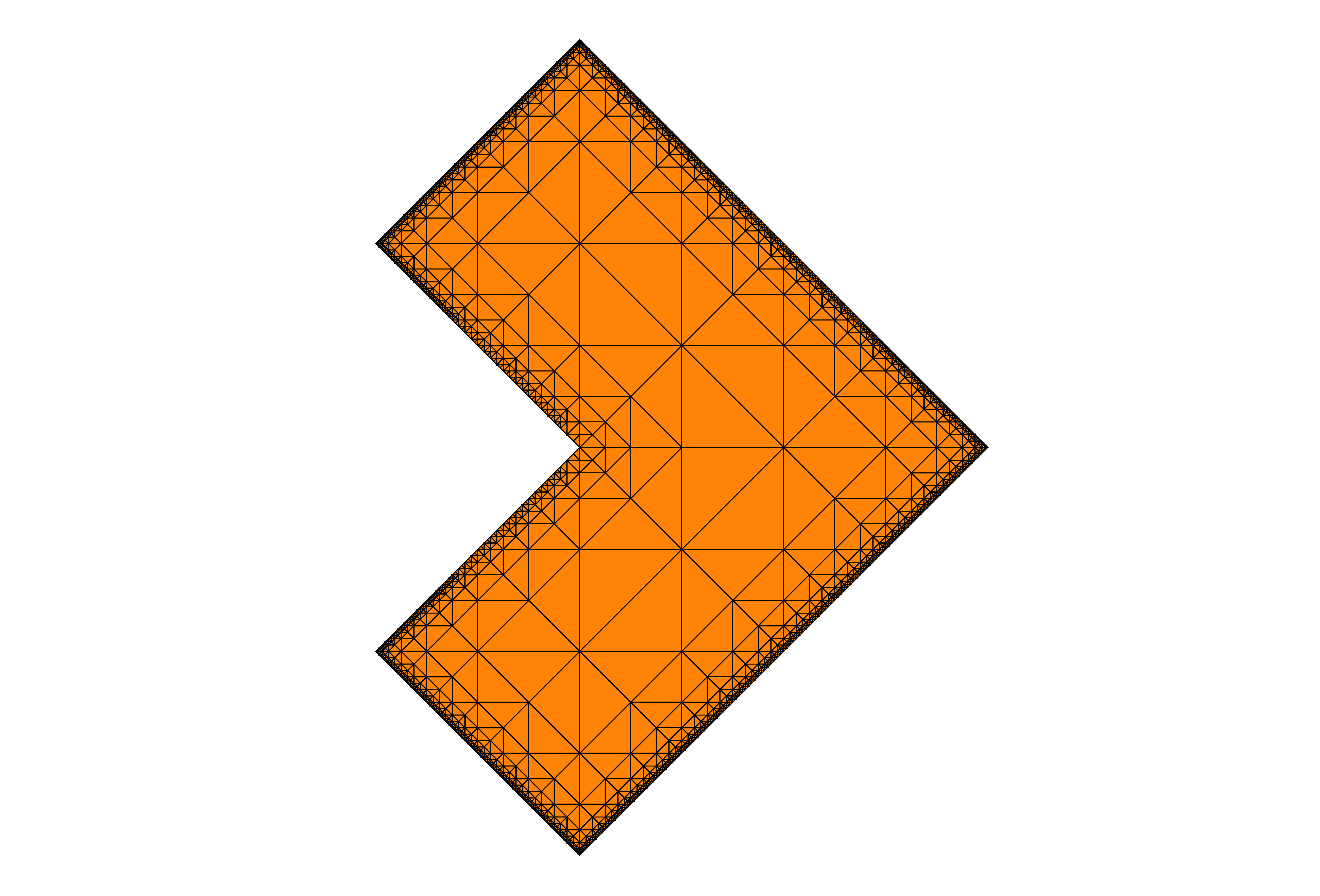}}\hspace*{-5mm}
	\end{minipage}%
	\caption{Example~\ref{subsection:screen}: Condition numbers of the preconditioned and non-preconditioned Galerkin matrix for an artificial refinement towards the right corner (left), the right edges (middle), and for Algorithm~\ref{algorithm} (right).}
\label{fig:screen_cond_number}
\end{figure}

\begin{figure}
	\centering
	\raisebox{-0.5\height}{\includegraphics[width=0.48\textwidth]{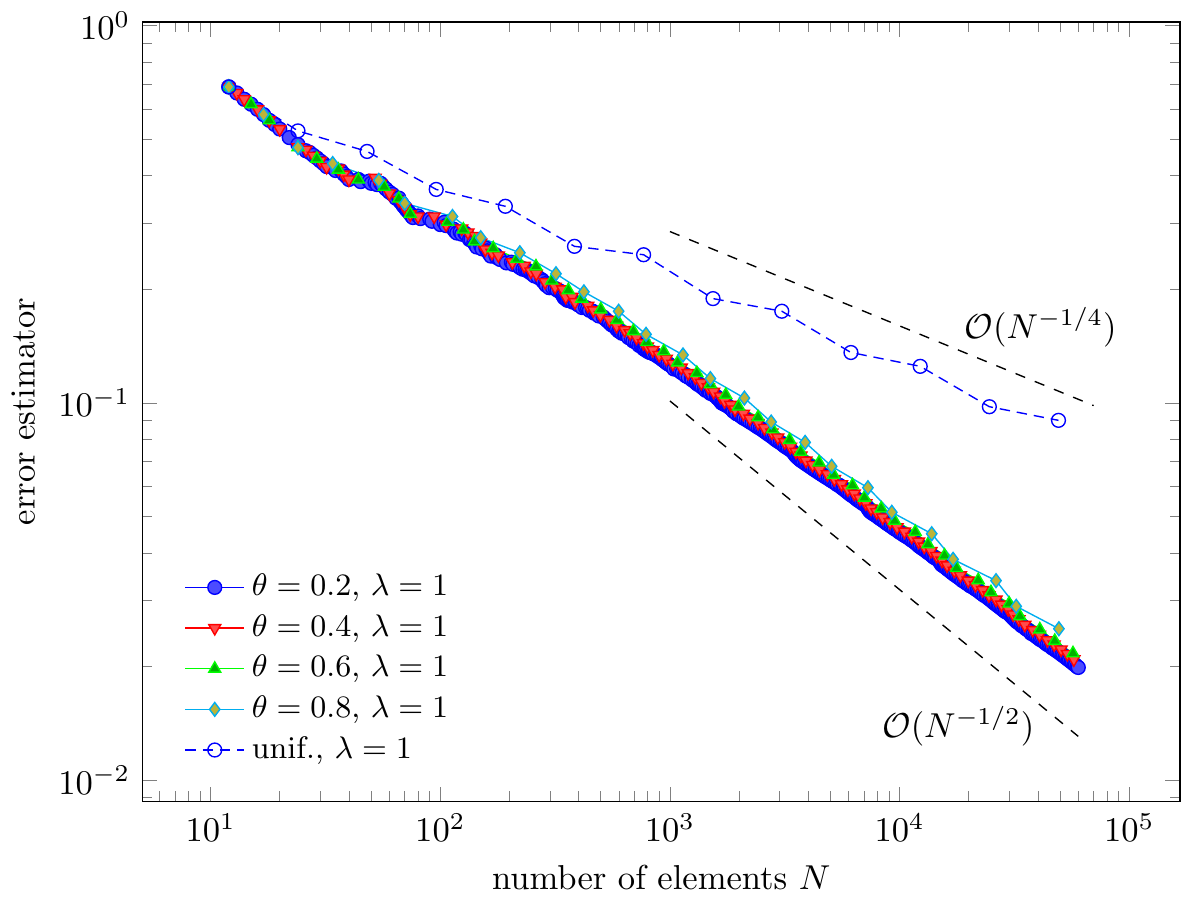}}
	\hfill
	\raisebox{-0.5\height}{\includegraphics[width=0.48\textwidth]{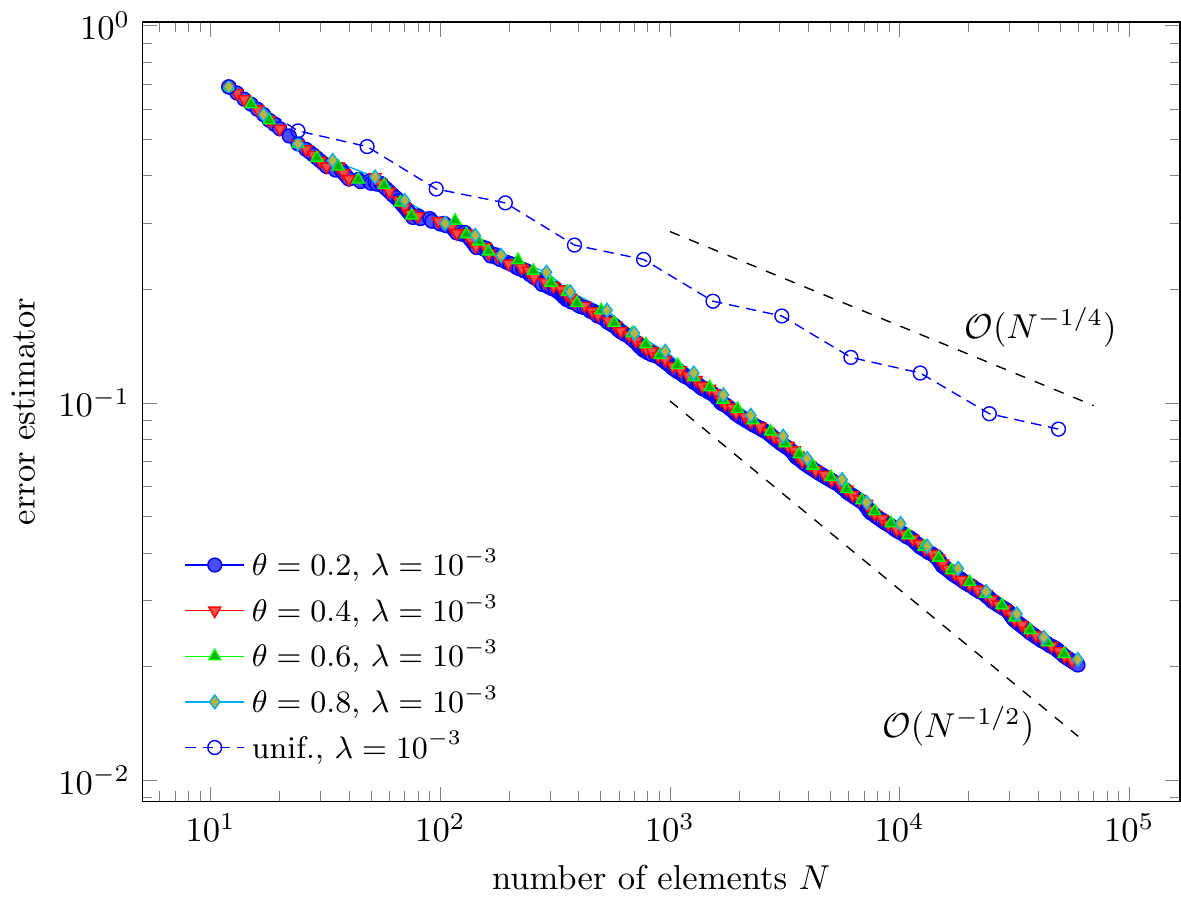}}\vspace{0.5cm}
	\raisebox{-0.5\height}{\includegraphics[width=0.48\textwidth]{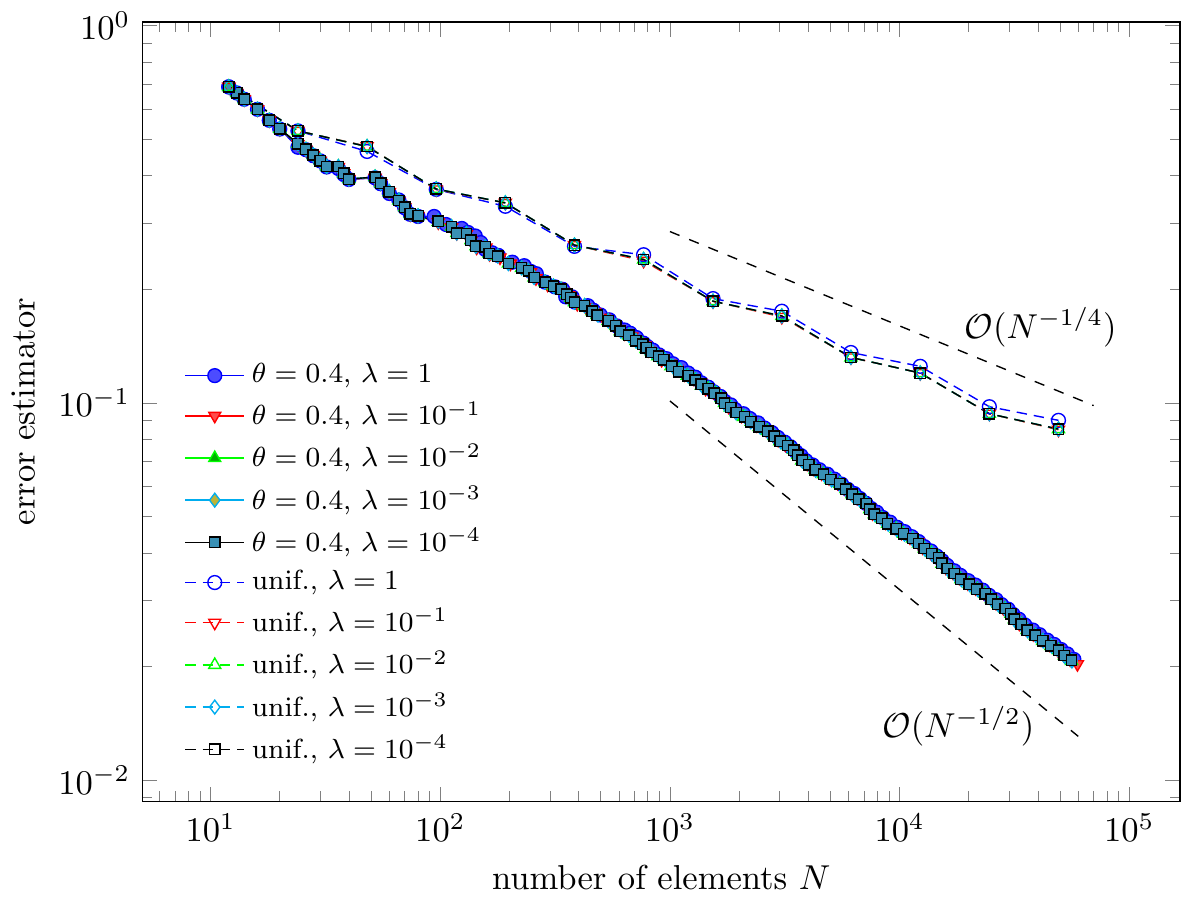}}
	\hfill
	\raisebox{-0.5\height}{\includegraphics[width=0.48\textwidth]{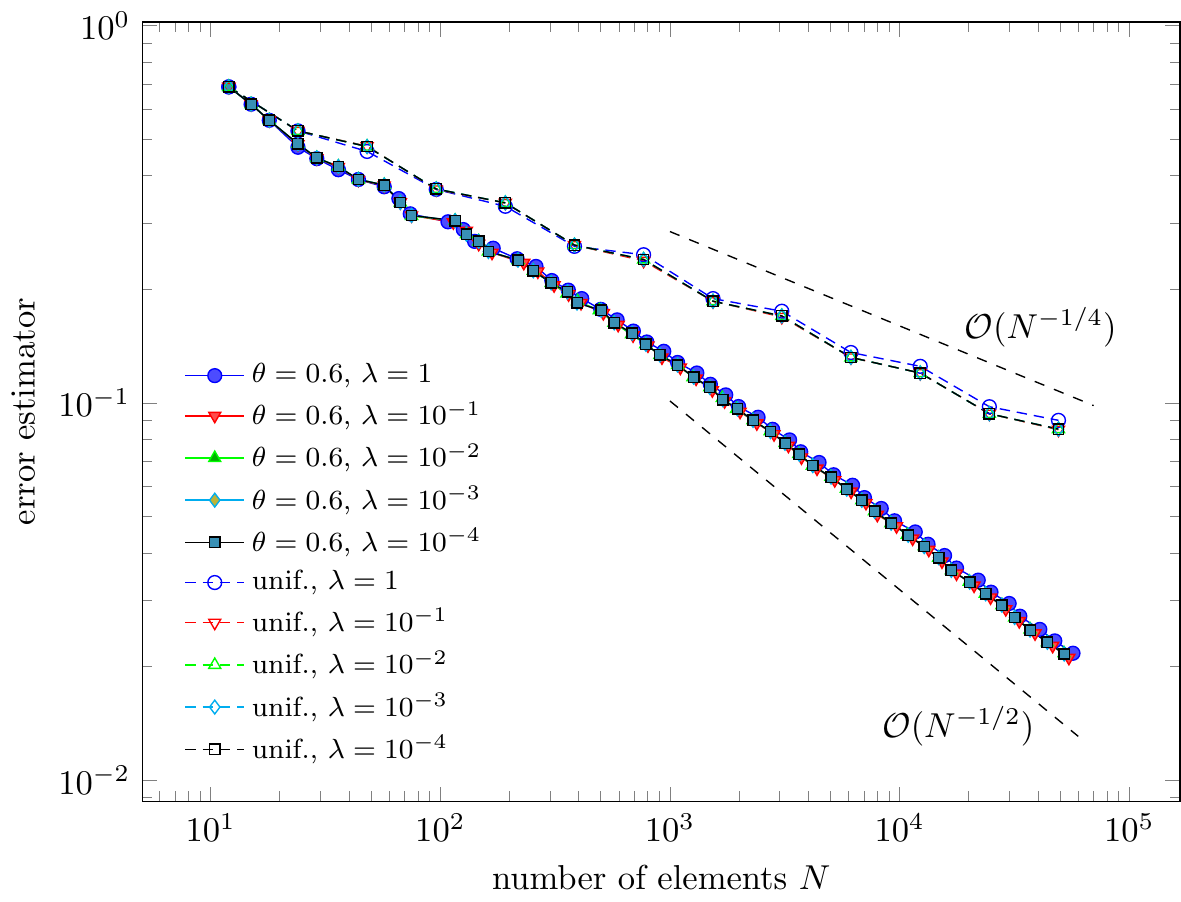}}
	\caption{Example~\ref{subsection:screen}: Estimator convergence for fixed values of $\lambda$ (left: $\lambda=1$, right: $\lambda=10^{-3}$) and $\theta\in\{0.2,0.4,0.6,0.8\}$ (top) and for fixed values of $\theta$ (left: $\theta=0.4$, right: $\theta=0.6$) and $\lambda\in\{1,10^{-1},\ldots,10^{-4}\}$ (bottom).}
\label{fig:screen_conv}
\end{figure}
%
\subsection{Screen problem in 3D}
\label{subsection:screen}
Let $\Gamma:=((-1,1)^2 \setminus [0,1]) \times\{0\}$, rotated by $3\pi/4$, cf. Figure~\ref{fig:screen_cond_number}.
We consider the weakly-singular integral equation $V \phi^\star=1$ on $\Gamma$. The exact solution $\pphi^\star \in \H^{-1/2}(\Gamma)$ is unknown. 

For the numerical solution of the Galerkin system, we employ PCG with the additive Schwarz preconditioner from Section~\ref{section:oasp}. We note that Theorem~\ref{thm:precond} does not cover this setting.
In particular, we note that the proposed additive Schwarz preconditioner from Section~\ref{section:oasp} appears to be optimal, while the mathematical optimality proof still remains open for screens, cf. Figure~\ref{fig:screen_cond_number}.

In Figure~\ref{fig:screen_conv}, we compare Algorithm~\ref{algorithm} with different values for $\theta$ and $\lambda$ to uniform mesh-refinement. 
We see that uniform mesh-refinement leads only to a reduced rate of $\OO(N^{-1/4})$, while adaptivity, independently of $\theta$ and $\lambda$, leads to the improved rate of approximately $\OO(N^{-1/2})$.

\begin{figure}
	\centering
	\raisebox{-0.5\height}{\includegraphics[width=0.48\textwidth]{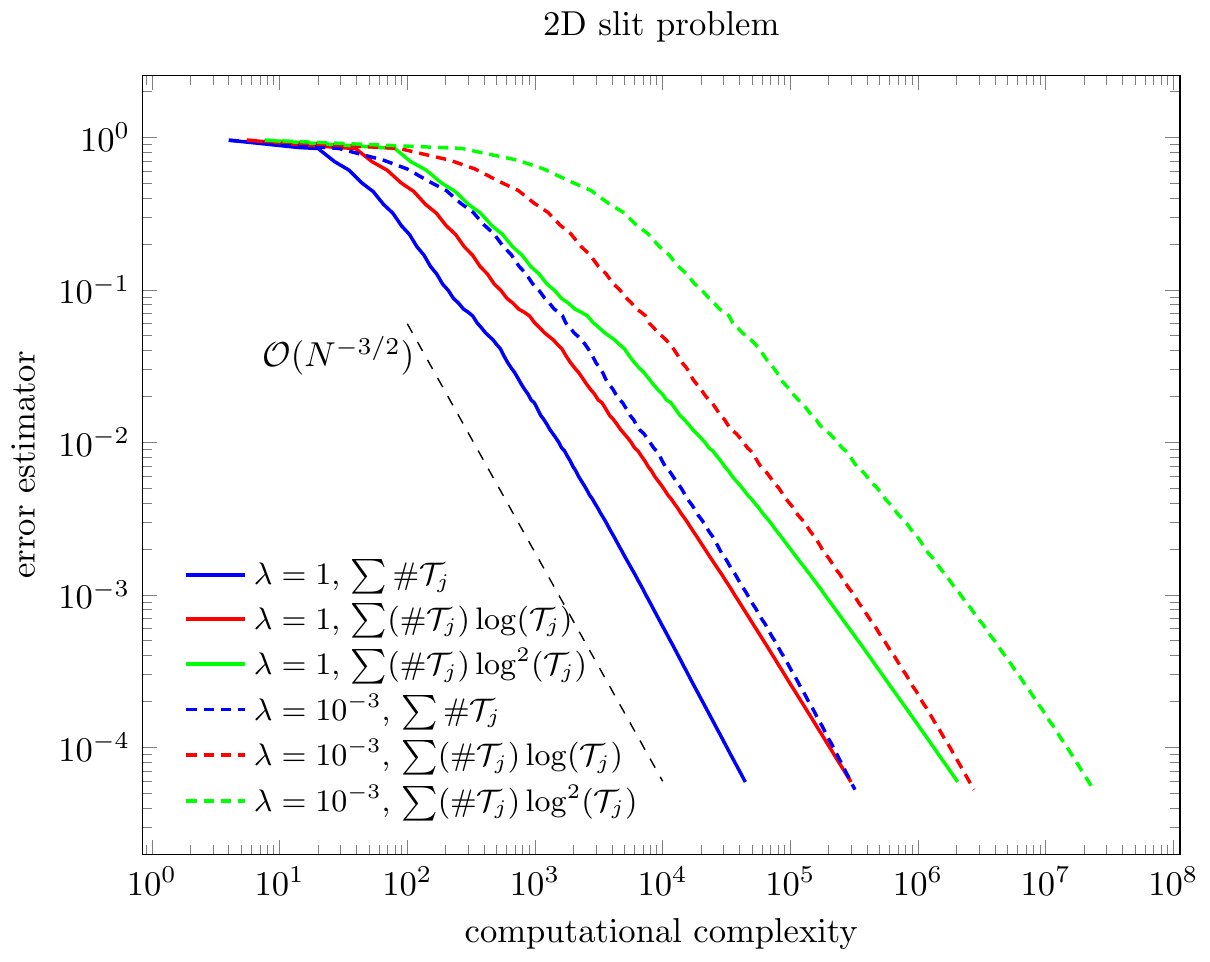}}
	\hfill
	\raisebox{-0.5\height}{\includegraphics[width=0.48\textwidth]{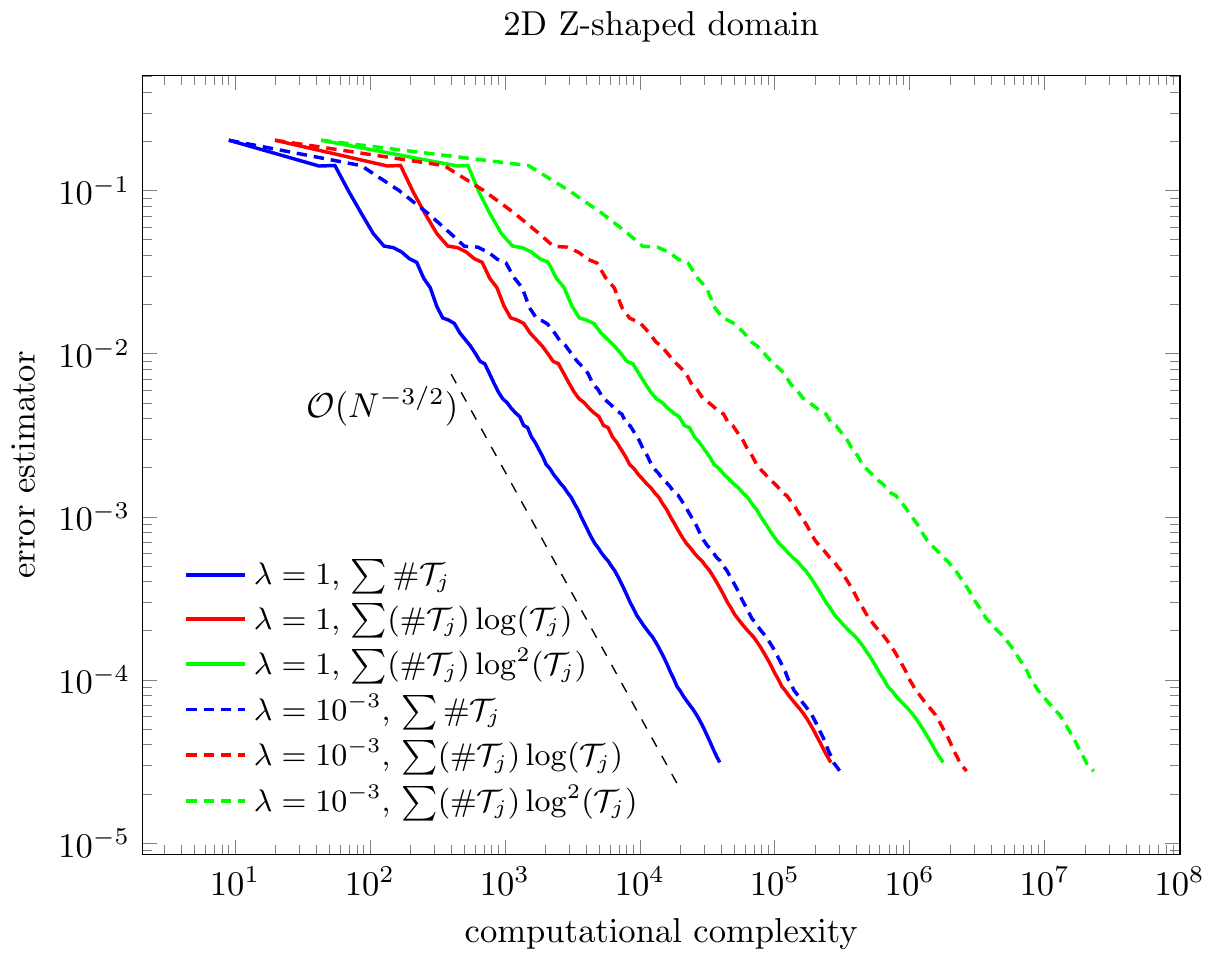}}\vspace{0.5cm}
	\raisebox{-0.5\height}{\includegraphics[width=0.48\textwidth]{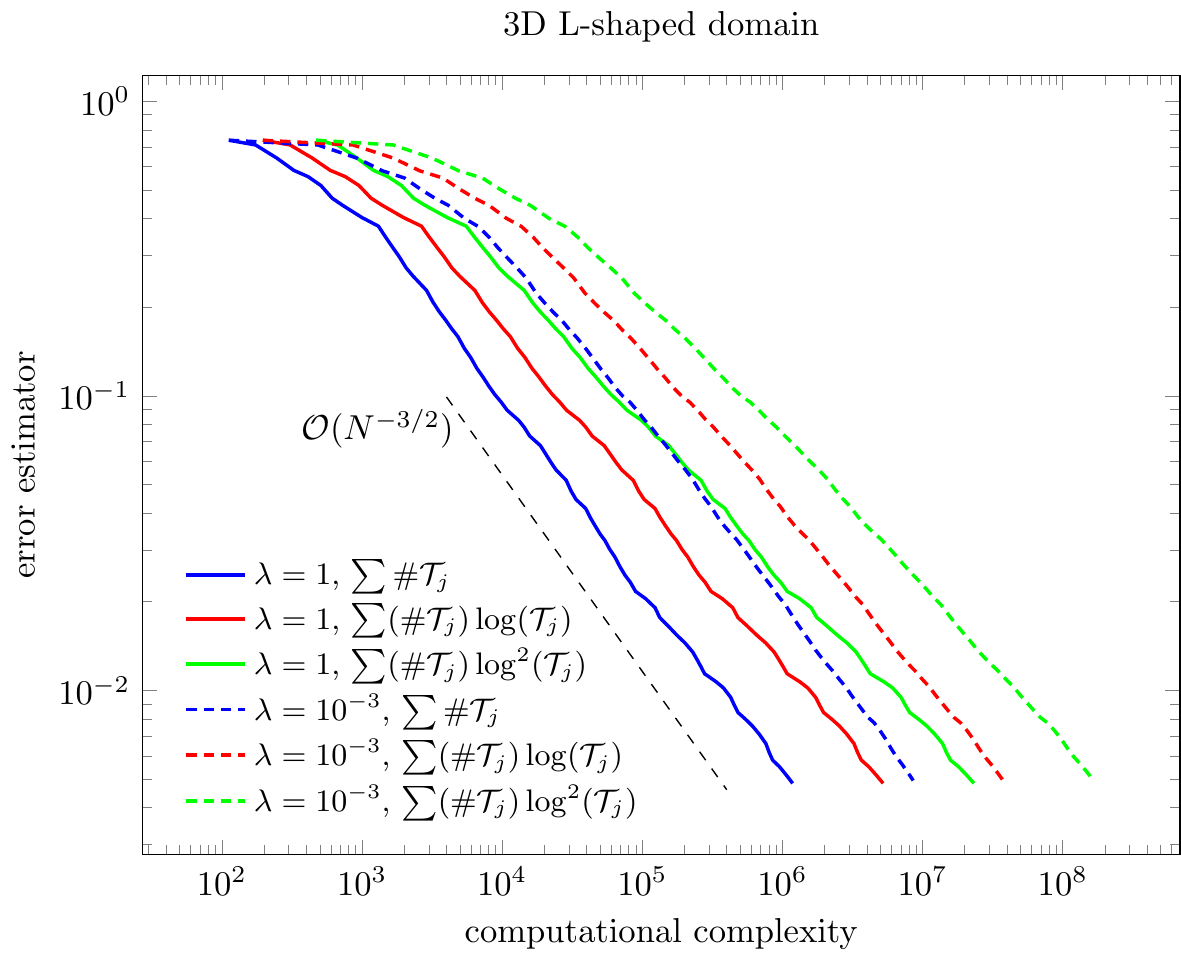}}
	\hfill
	\raisebox{-0.5\height}{\includegraphics[width=0.48\textwidth]{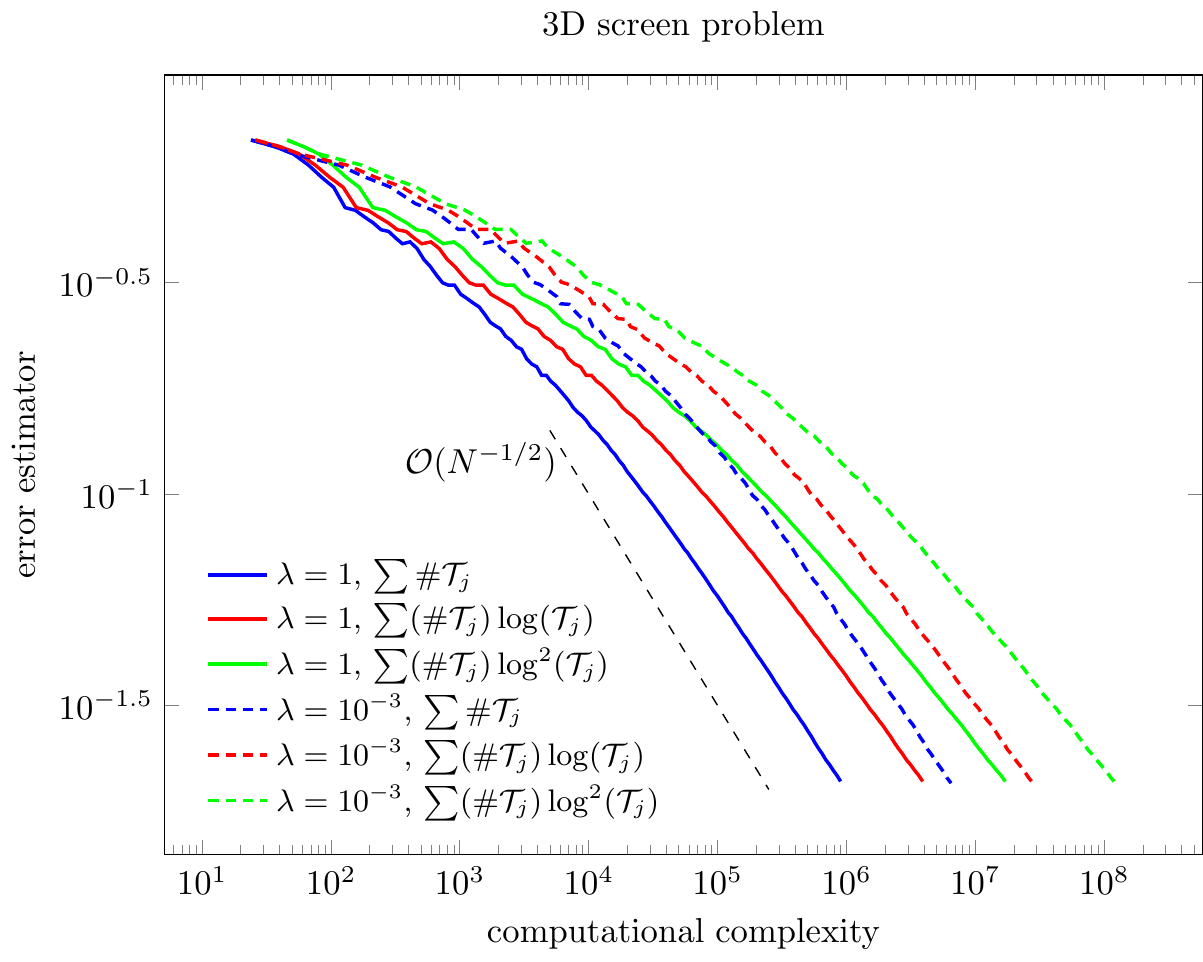}}
	\caption{To underline the quasi-optimal computational complexity of Algorithm~\ref{algorithm}, we plot the error estimator $\eta_j(\phi_{j\k})$ in the different experiments over the cumulative quantities $\sum_{(j,k) \le (j',k')} \#\TT_j$, $\sum_{(j,k) \le (j',k')} (\#\TT_j)\log(\#\TT_j)$ and $\sum_{(j,k) \le (j',k')} (\#\TT_j)\log^2(\#\TT_j)$ for $\theta = 0.4$ and $\lambda \in \{1,10^{-3}\}$.}
\label{fig:complexity}
\end{figure}
\def\cost{\rm cost}%
\subsection{Computational complexity}
With Figure~\ref{fig:complexity}, we aim to underpin the almost optimal computational complexity of Algorithm~\ref{algorithm} (see Corollary~\ref{corollary:algorithm}). To this end, we plot the error estimator $\eta_j(\phi_{jk})$ over the cumulative sums 
\begin{align*}
 \sum_{(j,k) \le (j',k')} \cost(\TT_j)
 \quad \text{with} \quad
 \cost(\TT_j) \in \big\{ \#\TT_j \,,\, (\#\TT_j)\, \log(\#\TT_j) \,,\, (\#\TT_j)\, \log^2(\#\TT_j) \big\}
\end{align*}
for $\theta = 0.4$ and $\lambda \in \{1,10^{-3}\}$. The negative impact of the logarithmic terms on the (preasymptotic) convergence rate is clearly visible.

\section{Proof of Theorem~\ref{thm:precond} (Optimal Multilevel Preconditioner)}\label{proof:precond}
\newcommand\ND{\boldsymbol{\mathcal{N}}\!\boldsymbol{\mathcal{D}}}
\newcommand\RT{\boldsymbol{\mathcal{R}}\!\boldsymbol{\mathcal{T}}}
\newcommand\curl{\boldsymbol{\operatorname{curl}}\,}
\newcommand\divergence{\boldsymbol{\operatorname{div}}\,}
\newcommand\Hcurl[1]{\boldsymbol{H}(\curl;\,#1)}
\newcommand\Hdiv[1]{\boldsymbol{H}(\divergence;\,#1)}
\newcommand\extND{\boldsymbol{E}}
\newcommand\normal{{\boldsymbol{n}}}
\newcommand\ssigma{{\boldsymbol{\sigma}}}
\newcommand\tangential{{\boldsymbol{t}}}
\newcommand\vv{{\boldsymbol{v}}}
\newcommand\uu{{\boldsymbol{u}}}
\newcommand\NN{{\mathcal{N}}}
\newcommand{\SY}{\ensuremath{\mathcal{Y}}}
\newcommand{\PAS}{\ensuremath{\mathcal{S}}}
\newcommand{\Smat}{\ensuremath{\boldsymbol{S}}}
\newcommand{\Amat}{\ensuremath{\boldsymbol{A}}}

For $d = 2$, we refer to~\cite{MR3634453,dissFuehrer} and thus focus only on $d = 3$ and $\Gamma = \partial \Omega$. Due to our additional assumption, $\TT_0 = \widehat\TT_0|_\Gamma$ is the restriction of a conforming simplicial triangulation $\widehat\TT_0$ of $\Omega$ to the boundary $\Gamma$. Moreover, 2D NVB refinement of $\TT_0$ (on the boundary $\Gamma$) is a special case of 3D NVB refinement of $\widehat\TT_0$ (in the volume $\Omega$) plus restriction to the boundary; see, e.g.,~\cite{stevenson}. Hence, each mesh $\TT_\coarse \in \T = \refine(\TT_0)$ is the restriction of a conforming NVB refinement $\widehat \TT_\coarse \in \widehat\T := \refine(\widehat\TT_0)$, i.e., $\TT_\coarse = \widehat\TT_\coarse|_\Gamma$. Throughout, let $\widehat\TT_\coarse \in \widehat\T$ be the coarsest extension of $\TT_\coarse \in \T$. Recall that NVB is a binary refinement rule. Therefore, $\TT_\fine \in \refine(\TT_\coarse)$ also implies that $\widehat\TT_\fine \in \refine(\widehat\TT_\coarse)$. Finally, we note that all triangulations $\widehat\TT_\coarse \in \widehat\T$ are uniformly $\gamma$-shape regular, i.e.,
\begin{align*}
 \max_{\widehat T \in \widehat \TT_\coarse} \frac{\diam(\widehat T)}{|\widehat{T}|^{1/3}} \le \gamma < \infty.
\end{align*}%
where $\gamma$ depends only on $\widehat\TT_0$.

Our argument adapts ideas from~\cite{hiptmairmao12}, where a subspace decomposition for 
the lowest-order N\'ed\'elec space $\ND^1(\widehat\TT_\coarse)$ (see, e.g., \cite{hiptmairzheng09}) in  $\Hcurl\Omega$ implies a decomposition of the corresponding discrete trace space. While the original idea dates back to~\cite{oswald99}, a nice summary of the argument is found in~\cite[Section~2]{hiptmairmao12}.

\begin{remark}
{\rm(i)} 
Our proof is based on the construction of an extension operator from $\PP_*^0(\TT_\bullet)$ to
$\ND^1(\widehat\TT_\bullet)$, see Lemma~\ref{prop:discreteExtension} below.
  It is not clear if such an operator can be constructed for the case
  $\Gamma\subsetneqq\partial\Omega$.
  
{\rm(ii)}
In~\cite{MR3378840}, a subspace decomposition of the lowest-order Raviart--Thomas space
$\RT^0(\widehat\TT_\coarse)$ (see, e.g., \cite{xcn09}) in $\Hdiv\Omega$ implies a decomposition of the corresponding normal trace space $\PP^0(\TT_\coarse)$.
Due to different scaling properties of the Raviart--Thomas basis functions (in the $\Hdiv\Omega$ norm) and their normal trace (in the $H^{-1/2}(\Gamma)$ norm), this argument does not apply in our case.
\end{remark}

\subsection{Discrete spaces and extensions}
Let $\widehat\EE_\coarse$ (resp.\ $\widehat\NN_\coarse$) denote the set of all edges (resp.\ all nodes) of $\widehat\TT_\coarse \in \widehat\T$.
For each node $\x\in\widehat\NN_\coarse$, let $\eta_{\coarse,\x} \in \mathcal{S}^1(\widehat\TT_\coarse)$ be the corresponding hat function, i.e., $\eta_{\coarse,\x}$ is $\widehat\TT_\coarse$-piecewise affine and globally continuous with $\eta_{\coarse,\x}(\y) = \delta_{\x\y}$ for all $\x, \y \in \widehat\NN_\coarse$.
For $E \in \widehat\EE_\coarse$, let $\uu_{\coarse,E} \in\ND^1(\widehat\TT_\coarse)$ denote the corresponding N\'ed\'elec basis function, i.e., for $K \in \widehat\TT_\coarse$ with $E = \conv\{\x,\y\} \subset \partial K$, it holds that
\begin{align}\label{eq:ND:basis}
  \uu_{\coarse,E}|_K = C(\eta_{\coarse,\x}\nabla\eta_{\coarse,\y}-\eta_{\coarse,\y}\nabla\eta_{\coarse,\x}),
\end{align}
where $C > 0$ is chosen such that $\int_{E'} \uu_{\coarse,E} \,ds = |E| \, \delta_{E E'}$ for all
$E,E'\in \widehat\EE_\coarse$. 
Scaling arguments yield the next lemma.
The proof follows the lines of~\cite[Lemma~5.7]{hiptmairmao12}.

\pagebreak
\vspace*{-6mm}
\begin{lemma}\label{lem:localext}
For $E\in\EE_\coarse$, recall the Haar function $\varphi_{\coarse,E} \in \PP^0(\TT_\coarse)$ from~\eqref{eq:psi}. Let $\uu_{\coarse,E} \in\ND^1(\widehat\TT_\coarse)$ denote the corresponding N\'ed\'elec basis function; see~\eqref{eq:ND:basis}.
Then, 
\begin{align}
 \varphi_{\coarse,E} = \curl \uu_{\coarse,E} \cdot \normal|_\Gamma
\, \text{ and } \, \,
 \norm{\varphi_{\coarse,E}}{H^{-1/2}(\Gamma)} 
 \leq \norm{\uu_{\coarse,E}}{\Hcurl\Omega}
  \leq C \, \norm{\varphi_{\coarse,E}}{H^{-1/2}(\Gamma)},
\end{align}
where $C>0$ depends only on $\Omega$ and the $\gamma$-shape regularity of $\widehat\TT_\bullet$. \qed
\end{lemma}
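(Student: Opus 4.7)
The statement consists of three claims, and I would attack them in the order they appear.

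\emph{Step 1: The pointwise identity.} Since $\uu_{\coarse,E}$ is a piecewise polynomial of degree one on $\widehat\TT_\coarse$, the vector field $\curl\uu_{\coarse,E}$ is $\widehat\TT_\coarse$-piecewise constant. Consequently, on each boundary triangle $T\in\TT_\coarse$, the scalar $\curl\uu_{\coarse,E}\cdot\normal|_T$ is constant, so that
\begin{align*}
 (\curl\uu_{\coarse,E}\cdot\normal)|_T
 = \frac{1}{|T|}\int_T \curl\uu_{\coarse,E}\cdot\normal\,ds
 = \frac{1}{|T|}\oint_{\partial T} \uu_{\coarse,E}\cdot\tangential\,d\sigma,
\end{align*}
where I used Stokes' theorem on the flat triangle $T$. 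The normalization condition $\int_{E'} \uu_{\coarse,E}\cdot\tangential\,ds = |E|\,\delta_{EE'}$ from~\eqref{eq:ND:basis} makes the boundary integral vanish unless $E\subset\partial T$, in which case it equals $\pm|E|$ with sign determined by the orientation of $\partial T$ relative to~$E$. With the convention on $T^+$ fixed after~\eqref{eq:P0*}, this reproduces $\pm|E|/|T^\pm|$ on $T^\pm$ and $0$ elsewhere, which is exactly~\eqref{eq:psi}.

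\emph{Step 2: The trace bound $\norm{\varphi_{\coarse,E}}{H^{-1/2}(\Gamma)}\leq\norm{\uu_{\coarse,E}}{\Hcurl\Omega}$.} Given Step~1, $\varphi_{\coarse,E}$ is the normal trace of $\curl\uu_{\coarse,E}\in\Hdiv\Omega$, and $\operatorname{div}\curl\uu_{\coarse,E}=0$. The standard trace estimate for $\Hdiv$-fields (obtained by integration by parts against an $H^{1/2}(\Gamma)$-test function lifted to $H^1(\Omega)$) yields
\begin{align*}
 \norm{\varphi_{\coarse,E}}{H^{-1/2}(\Gamma)}
 \le \norm{\curl\uu_{\coarse,E}}{L^2(\Omega)}
 \le \norm{\uu_{\coarse,E}}{\Hcurl\Omega}.
\end{align*}

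\emph{Step 3: The reverse estimate $\norm{\uu_{\coarse,E}}{\Hcurl\Omega}\leq C\,\norm{\varphi_{\coarse,E}}{H^{-1/2}(\Gamma)}$.} This is the main work and is purely a scaling argument on the element patch $\omega_E := \supp(\uu_{\coarse,E})$, as in~\cite[Lemma~5.7]{hiptmairmao12}. Pulling back to the reference tetrahedra that make up $\omega_E$ and using $\gamma$-shape regularity of $\widehat\TT_\coarse$, together with $\#\{\widehat K\in\widehat\TT_\coarse : E\subset\widehat K\}\lesssim 1$, one obtains
\begin{align*}
 \norm{\uu_{\coarse,E}}{L^2(\Omega)}^2 \lesssim h_E^3,
 \qquad
 \norm{\curl\uu_{\coarse,E}}{L^2(\Omega)}^2 \lesssim h_E,
\end{align*}
where $h_E=|E|=\diam(E)$. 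For the boundary function one computes directly from~\eqref{eq:psi} that $\norm{\varphi_{\coarse,E}}{L^2(\Gamma)}^2 = |E|^2(1/|T^+|+1/|T^-|)\simeq 1$. Combining this with the standard inverse estimate (see, e.g., the references in the paper's BEM state-of-the-art discussion)
\begin{align*}
 \norm{\psi}{L^2(T^+\cup T^-)} \leq C\,h_E^{-1/2}\,\norm{\psi}{H^{-1/2}(\Gamma)}
 \quad\text{for all } \psi\in\PP^0(\TT_\coarse) \text{ with }\supp\psi\subseteq T^+\cup T^-,
\end{align*}
applied to $\psi=\varphi_{\coarse,E}$, yields the lower bound $\norm{\varphi_{\coarse,E}}{H^{-1/2}(\Gamma)}\gtrsim h_E^{1/2}$. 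The two estimates together give $\norm{\uu_{\coarse,E}}{\Hcurl\Omega}\lesssim h_E^{1/2}\lesssim \norm{\varphi_{\coarse,E}}{H^{-1/2}(\Gamma)}$.

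I expect Step~3 to be the only delicate part. Step~1 is a bookkeeping exercise using Stokes' theorem and the chosen orientation of $T^+$, Step~2 is a textbook trace estimate, while Step~3 depends on the element-wise scalings of both the Nédélec basis function and the Haar function being of the same order $h_E^{1/2}$; the only ingredient that is not an elementary pullback is the inverse-type lower bound for the $H^{-1/2}(\Gamma)$-norm of $\varphi_{\coarse,E}$, which I would import as a known BEM inverse inequality for locally refined $\gamma$-shape-regular meshes.
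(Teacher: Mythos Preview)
Your proposal is correct and follows the same approach the paper indicates: the paper does not spell out a proof but simply says ``Scaling arguments yield the next lemma. The proof follows the lines of~\cite[Lemma~5.7]{hiptmairmao12}'', which is precisely what you have carried out. Your Step~1 (Stokes plus the edge normalization), Step~2 (the $\Hdiv$ normal-trace bound with $\divergence\curl=0$), and Step~3 (elementwise scaling combined with the standard $H^{-1/2}$--$L^2$ inverse estimate on locally supported piecewise constants) give exactly the intended argument.
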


The following lemma holds for (simply) connected Lipschitz domains $\Omega$ and follows essentially from~\cite{MR3522965}.
Recall $\PP^0_*(\TT_\coarse)$ from~\eqref{eq:P0*}.

\begin{lemma}\label{prop:discreteExtension}
There exists a linear operator $\extND_\bullet : \PP_*^0(\TT_\coarse) \to\ND^1(\widehat\TT_\coarse)$ such that
\begin{align}\label{eq:prop:discreteExtension}
 \curl(\extND_\bullet\psi_\bullet) \cdot \normal|_\Gamma = \psi_\bullet
 \text{ and }
 \norm{\extND_\bullet\psi_\bullet}{\Hcurl\Omega}
 \leq C \, \norm{\psi_\bullet}{H^{-1/2}(\Gamma)}
 \text{ for all } \psi_\bullet \in \PP_*^0(\TT_\coarse).
\end{align}
The constant $C > 0$ depends only on $\gamma$-shape regularity of $\widehat\TT_\bullet$.
\end{lemma}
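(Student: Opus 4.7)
The plan is to realise $\extND_\bullet$ as the composition of a continuous right-inverse of the curl--normal trace operator with a discrete, curl-commuting quasi-interpolation of the type developed in~\cite{MR3522965}. The essential point is that, because $\Omega$ is simply connected, the map $\vv\mapsto(\curl\vv)\cdot\normal|_\Gamma$ is a bounded surjection from $\Hcurl\Omega$ onto the zero-mean subspace of $H^{-1/2}(\Gamma)$: Stokes' theorem forces the mean-value compatibility, and a bounded right-inverse can be constructed via standard Hodge-type arguments (for instance, by setting $\vv=\curl\boldsymbol{w}$ with $\boldsymbol{w}$ the solution of a suitable $\curl\curl$-problem with boundary data encoded by $\psi_\bullet$). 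Given $\psi_\bullet\in\PP_*^0(\TT_\bullet)\subset H^{-1/2}(\Gamma)$, I would apply this right-inverse to obtain $\vv\in\Hcurl\Omega$ with $(\curl\vv)\cdot\normal|_\Gamma=\psi_\bullet$ and $\norm{\vv}{\Hcurl\Omega}\lesssim\norm{\psi_\bullet}{H^{-1/2}(\Gamma)}$.

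I would then invoke the stable commuting quasi-interpolation $\Pi_\bullet^{\ND}\colon\Hcurl\Omega\to\ND^1(\widehat\TT_\bullet)$ from~\cite{MR3522965}, together with its companion $\Pi_\bullet^{\RT}\colon\Hdiv\Omega\to\RT^0(\widehat\TT_\bullet)$: both are bounded with constants depending only on the $\gamma$-shape regularity of $\widehat\TT_\bullet$, and they satisfy the commuting identity $\curl\Pi_\bullet^{\ND}=\Pi_\bullet^{\RT}\curl$. Defining $\extND_\bullet\psi_\bullet:=\Pi_\bullet^{\ND}\vv$, linearity is inherited from the two constructions, and the norm bound
\begin{align*}
\norm{\extND_\bullet\psi_\bullet}{\Hcurl\Omega}\lesssim\norm{\vv}{\Hcurl\Omega}\lesssim\norm{\psi_\bullet}{H^{-1/2}(\Gamma)}
\end{align*}
is immediate. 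For the trace identity, the commuting diagram gives $\curl(\extND_\bullet\psi_\bullet)\cdot\normal|_\Gamma=(\Pi_\bullet^{\RT}\curl\vv)\cdot\normal|_\Gamma$; since the field $\curl\vv$ already has the $\TT_\bullet$-piecewise constant normal trace $\psi_\bullet$ on $\Gamma$, the boundary-preservation property of $\Pi_\bullet^{\RT}$ discussed below leaves this trace unchanged, so $\curl(\extND_\bullet\psi_\bullet)\cdot\normal|_\Gamma=\psi_\bullet$.

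The main obstacle is precisely this boundary-preservation property of the commuting quasi-interpolation: a generic Cl\'ement-type averaging would perturb the normal trace, whereas what is required is a Fortin-type variant whose action on the boundary normal flux reduces to (at worst) the $L^2(\Gamma)$-projection onto $\PP^0(\TT_\bullet)$, and thus the identity on functions already in $\PP^0(\TT_\bullet)$. The construction in~\cite{MR3522965}, suitably localised near $\Gamma$, is exactly of this form -- this is the reason we rely on that particular reference rather than on the canonical N\'ed\'elec/Raviart--Thomas interpolants, which fail to be stable in the $H^{-1/2}$-type norms at the boundary. Once this boundary-preservation is verified on the canonical Raviart--Thomas degrees of freedom on $\Gamma$, the three steps combine to give the asserted operator, with the $H(\curl;\Omega)$-stability constant depending only on $\Omega$ (through the continuous lifting) and on the shape regularity of $\widehat\TT_\bullet$ (through the quasi-interpolant).
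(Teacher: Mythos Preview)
Your approach is genuinely different from the paper's, and while the overall strategy is plausible, it is more circuitous and leaves precisely the step you flag as ``the main obstacle'' unresolved.

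The paper avoids your boundary-preservation issue altogether by never passing through a continuous lifting. It uses two \emph{discrete} results from~\cite{MR3522965} directly: first, a stable discrete normal-trace lifting $\psi_\bullet\mapsto\ssigma_\bullet\in\RT^0(\widehat\TT_\bullet)$ with $\ssigma_\bullet\cdot\normal|_\Gamma=\psi_\bullet$, $\divergence\ssigma_\bullet=0$, and $\norm{\ssigma_\bullet}{\Hdiv\Omega}\lesssim\norm{\psi_\bullet}{H^{-1/2}(\Gamma)}$ (\cite[Theorem~2.1]{MR3522965}); second, a discrete regular-decomposition/potential result giving $\vv_\bullet\in\ND^1(\widehat\TT_\bullet)$ with $\curl\vv_\bullet=\ssigma_\bullet$ and $\norm{\vv_\bullet}{\Hcurl\Omega}\lesssim\norm{\ssigma_\bullet}{\Hdiv\Omega}$ (\cite[Lemma~4.3]{MR3522965}). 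Composing these gives $\extND_\bullet$ in two lines, with the trace identity holding by construction rather than by preservation under a quasi-interpolant.

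In your route, the step $(\Pi_\bullet^{\RT}\curl\vv)\cdot\normal|_\Gamma=\psi_\bullet$ requires that the commuting projection $\Pi_\bullet^{\RT}$ leave normal traces in $\PP^0(\TT_\bullet)$ untouched. This is \emph{not} a property of generic smoothed commuting projections (Sch\"oberl, Christiansen--Winther, etc.), and the results actually cited from~\cite{MR3522965} in the paper are not commuting quasi-interpolants of that type. You would need a Fortin-type variant with exact boundary-DOF preservation, whose construction and stability with shape-regularity-only constants you do not establish. So as written, your argument has a gap at exactly the point you identify; the paper's two-step discrete approach sidesteps it entirely and is what you should use.
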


\begin{proof}
Let $\psi_\bullet \in \PP_*^0(\TT_\coarse)$.
First, \cite[Theorem~2.1]{MR3522965} provides $\ssigma_\coarse \in \RT^0(\widehat\TT_\coarse)$ with 
\begin{align*}
 \ssigma_\coarse\cdot\normal|_\Gamma = \psi_\bullet, 
 \quad
 \divergence\ssigma_\coarse = 0,
 \quad \text{and} \quad
 \norm{\ssigma_\coarse}{\Hdiv\Omega} \lesssim \norm{\psi_\bullet}{H^{-1/2}(\partial\Omega)}.
\end{align*}
Then,~\cite[Lemma~4.3]{MR3522965} provides $\extND_\bullet \psi_\bullet := \vv_\coarse\in\ND^1(\widehat\TT_\coarse)$ such that 
\begin{align*}
 \curl\vv_\coarse = \ssigma_\coarse
 \quad \text{and} \quad
 \norm{\vv_\coarse}{\Hcurl\Omega}\lesssim \norm{\ssigma_\coarse}{\Hdiv\Omega}.
\end{align*}
Combining these results, we conclude the proof.
\end{proof}

\subsection{Abstract additive Schwarz preconditioners}
Let $\SX$ denote some finite dimensional Hilbert space with norm $\norm{\cdot}\SX$ and subspace decomposition
\begin{align*}
  \SX = \sum_{i\in\mathcal I} \SX_i,
\end{align*}
where $\mathcal{I}$ is a finite index set.  
The additive Schwarz operator is given by $\PAS = \sum_{i\in\mathcal I} \PAS_i$, where $\PAS_i$
is the $\SX$-orthogonal projection onto $\SX_i$, i.e., 
\begin{align*}
  \dual{\PAS_i x}{x_i}_{\SX} = \dual{x}{x_i}_{\SX} \quad\text{for all } x_i\in \SX_i \text{ and all }x\in\SX,
\end{align*}
where $\dual\cdot\cdot_{\SX}$ denotes the scalar product on $\SX$.
Then, the operator $\PAS$ is positive definite and symmetric
(with respect to $\dual\cdot\cdot_{\SX}$).
Define the multilevel norm
\begin{align}\label{eq:multilevelnorm}
  \enorm{x}_\SX^2 := \inf\set[\Big]{\sum_{i\in\mathcal I} \norm{x_i}{\SX}^2}{x=\sum_{i\in\mathcal{I}} x_i
  \quad\text{with } x_i\in \SX_i \text{ for all } i \in \mathcal{I} }.
\end{align}
It is proved, e.g., in~\cite[Theorem~16]{oswald94}
that $\dual{\PAS^{-1}x}{x}_{\SX} = \enorm{x}_\SX^2$. Let $C > 0$. If
\begin{align*}
  C^{-1} \norm{x}\SX \leq \enorm{x}_\SX \leq C \, \norm{x}\SX
  \quad \text{for all } x \in \SX,
\end{align*}
then the extreme eigenvalues of $\PAS^{-1}$ (and hence those of $\PAS$) are bounded (from above and below).
In particular, the additive Schwarz operator $\PAS$ 
is optimal in the sense that its
condition number (ratio of largest and smallest eigenvalues) depends only on $C>0$.

Let $\Smat$ denote the matrix representation of $\PAS$. Then, the norm equivalence from above and the
latter observations imply that the condition number of $\Smat$ is bounded.
The abstract theory on additive Schwarz operators given in~\cite[Chapter~2]{toswid} shows that $\Smat$ has the form
$\Smat = \P^{-1}\Amat$, where $\Amat$ is the Galerkin matrix of $\dual\cdot\cdot_\SX$.
Therefore, boundedness of the condition number of $\Smat$ implies \emph{optimality} of the preconditioner $\P^{-1}$.
We shortly discuss the matrix representation~\eqref{eq:defPrec} of the additive Schwarz preconditioner $\P^{-1}$.
Following~\cite[Chapter~2]{toswid}, let $\Amat_i$ denote the Galerkin matrix of $\dual{\cdot}{\cdot}_\SX$ restricted to
$\SX_i$, and let $\embed_i$ denote the matrix that realizes the embedding from $\SX_i\to\SX$.
We consider the matrix representation of $\PAS_i : \SX\to\SX_i\subset \SX$.
Let $x\in\SX$ with coordinate vector $\x$, and let $x_i\in\SX_i$ be arbitrary with coordinate vector $\x_i$.
The defining relation
\begin{align*}
  \dual{\PAS_i x}{x_i}_{\SX} = \dual{x}{x_i}_{\SX} \quad\text{for all } x_i\in \SX_i
\end{align*}
of $\PAS_i$ then reads in matrix-vector form (with $\Smat_i$ being the matrix representation of $\PAS_i$) as
\begin{align*}
  \x_i\cdot(\Amat_i \Smat_i\x) = (\embed_i\x_i)\cdot(\Amat\x) \quad\text{for all coefficient vectors } \x_i,
\end{align*}
or equivalently
\begin{align*}
  \Amat_i \Smat_i\x = \embed_i^{T}\Amat\x.
\end{align*}
Since $\Amat_i$ is invertible, we have that
\begin{align*}
  \Smat_i = \Amat_i^{-1} \embed_i^T\Amat.
\end{align*}
Note that the range of the operator $\PAS_i$ is $\SX_i$ and correspondingly for the matrix representation $\Smat_i$.
We therefore apply the embedding $\embed_i$ and obtain the representation 
\begin{align*}
  \Smat = \P^{-1}\Amat, 
  \quad \text{where} \quad
  \P^{-1} = \sum_{i\in\mathcal I} \embed_i \Amat_i^{-1} \embed_i^T.
\end{align*}
To finally prove~\eqref{eq:defPrec},
note that for one-dimensional subspaces $\SX_i$, $\Amat_i$
reduces to the diagonal entry of the matrix $\Amat$. Overall, we thus derive the matrix representation~\eqref{eq:defPrec}.

\subsection{Subspace decomposition of $\boldsymbol{\ND^1(\widehat\TT_\coarse)}$ in $\boldsymbol{\Hcurl\Omega}$}
The following result is taken from~\cite[Theorem~4.1]{hiptwuzheng2012}; see also the references therein.
In particular, we note that their proof requires the assumption that $\Omega$ is simply connected.

\begin{proposition}\label{thm:decomp:nedelec}
Let $\SY_\bullet := \ND^1(\widehat\TT_\bullet)$, $\SY_{\bullet,E} := \linhull\{\uu_{\coarse,E}\}$,
$\SY_{\coarse,\x} := \linhull\{\nabla \eta_{\coarse,\x}\}$, and
\begin{align*}
 \widehat\EE_\ell^\star 
 &:= (\widehat\EE_\ell \setminus \widehat\EE_{\ell-1})
 \cup \set{E\in\widehat\EE_\ell}{\supp\uu_{\ell,E}\subsetneqq\supp\uu_{\ell-1,E}},
 \\
 \widehat\NN_\ell^\star &:= (\widehat\NN_\ell\setminus\widehat\NN_{\ell-1}) 
 \cup \set{\x\in\widehat\NN_\ell}{\supp\eta_{\ell,\x}\subsetneqq\supp\eta_{\ell-1,\x}}.
\end{align*}
Then, it holds that
\begin{align}\label{eq:decomp:nedelec}
  \SY_L = \SY_0 + \sum_{\ell=1}^L \bigg(\sum_{E\in\widehat\EE_\ell^\star} \SY_{\ell,E}
  + \sum_{\x\in\widehat\NN_\ell^\star} \SY_{\ell,\x}\bigg).
\end{align}
Moreover, it holds that
\begin{align}\label{eq:Hcurl:prop}
 C^{-1}\norm{\vv}{\Hcurl\Omega} \leq \enorm{\vv}_{\SY_L}
 \leq C \, \norm{\vv}{\Hcurl\Omega} 
 \quad\text{for all } \vv \in\SY_L,
\end{align}
where $C > 0$ depends only on $\Omega$ and $\widehat\TT_0$.\qed
\end{proposition}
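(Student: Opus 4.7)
The plan is to establish both the algebraic subspace decomposition~\eqref{eq:decomp:nedelec} and the two-sided norm equivalence~\eqref{eq:Hcurl:prop}, using a combination of a discrete regular (Helmholtz-type) decomposition in $\Hcurl\Omega$ together with a level-by-level quasi-interpolation argument, in the spirit of~\cite{oswald99}. The algebraic identity itself I would prove by induction on $L$. For $L=0$ it is trivial. For the induction step, I would construct a local Nédélec quasi-interpolation operator $J_{L-1}\colon \SY_L \to \SY_{L-1}$ that preserves those edge degrees of freedom whose associated basis function is unchanged between level $L-1$ and $L$. For $\vv\in\SY_L$ the correction $\vv - J_{L-1}\vv$ is then supported on elements touching the refined region, hence expandable in the basis $\{\uu_{L,E}\}_{E\in\widehat\EE_L^\star}$ up to a discrete gradient; by the discrete de Rham sequence on $\widehat\TT_L$ (which is exact thanks to $\Omega$ simply connected) the gradient part lies in $\linhull\{\nabla\eta_{L,\x}: \x\in\widehat\NN_L^\star\}$. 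Applying the induction hypothesis to $J_{L-1}\vv\in\SY_{L-1}$ finishes~\eqref{eq:decomp:nedelec}.

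For~\eqref{eq:Hcurl:prop}, the lower bound $\norm{\vv}{\Hcurl\Omega}\le C\,\enorm{\vv}_{\SY_L}$ is the routine finite-overlap estimate: given any admissible decomposition $\vv = \vv_0 + \sum_\ell(\sum_E\vv_{\ell,E}+\sum_\x\vv_{\ell,\x})$, squaring the $\Hcurl\Omega$-norm and using $\gamma$-shape regularity of $\widehat\TT_\ell$ (to bound the pointwise overlap of supports at each level) together with the locality of the basis functions gives $\norm{\vv}{\Hcurl\Omega}^2\lesssim \sum\norm{\cdot}{\Hcurl\Omega}^2$, and taking the infimum yields the bound. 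For the upper bound $\enorm{\vv}_{\SY_L}\le C\,\norm{\vv}{\Hcurl\Omega}$, the plan is to start from a continuous regular decomposition $\vv=\vv^r+\nabla p$ with $\vv^r\in H^1(\Omega)^3$, $p\in H^1(\Omega)$, and $\norm{\vv^r}{H^1(\Omega)}+\norm{p}{H^1(\Omega)}\lesssim \norm{\vv}{\Hcurl\Omega}$ (available since $\Omega$ is simply connected). Then apply Scott–Zhang type quasi-interpolants $\Pi^{{\rm N}}_\ell$ onto $\ND^1(\widehat\TT_\ell)$ and $\Pi^{{\rm SZ}}_\ell$ onto $\mathcal{S}^1(\widehat\TT_\ell)$, telescope
\begin{align*}
 \vv = \Pi^{{\rm N}}_0\vv^r + \nabla\Pi^{{\rm SZ}}_0 p + \sum_{\ell=1}^L \bigl[(\Pi^{{\rm N}}_\ell-\Pi^{{\rm N}}_{\ell-1})\vv^r + \nabla(\Pi^{{\rm SZ}}_\ell-\Pi^{{\rm SZ}}_{\ell-1})p\bigr],
\end{align*}
and bound each level-difference by the Sobolev–Slobodeckij seminorm on a local patch, summing by the classical Bramble–Pasciak–Xu / BPX-type argument to recover $\norm{\vv^r}{H^1}^2+\norm{p}{H^1}^2$.

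The main obstacle is showing that the level-$\ell$ increments $(\Pi^{{\rm N}}_\ell-\Pi^{{\rm N}}_{\ell-1})\vv^r$ and $\nabla(\Pi^{{\rm SZ}}_\ell-\Pi^{{\rm SZ}}_{\ell-1})p$ actually lie in $\sum_{E\in\widehat\EE_\ell^\star}\SY_{\ell,E}+\sum_{\x\in\widehat\NN_\ell^\star}\SY_{\ell,\x}$ rather than in the full level-$\ell$ spaces -- this is what makes the decomposition truly local on adaptively refined meshes. The point is to design the quasi-interpolants so that for any edge or node whose basis function is identical at levels $\ell-1$ and $\ell$ the two operators agree, so that only the starred basis functions survive in the difference. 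This requires a careful choice of the Scott–Zhang averaging supports, together with support-tracking under newest-vertex bisection (using that NVB produces a uniformly shape-regular refinement tree so that supports can only shrink by a bounded factor per level). Once this localization step is in place, the remaining stability estimates combine inverse inequalities on each level with the continuous regular decomposition bound, and the simple-connectedness of $\Omega$ enters only through the existence of the regular decomposition and the exactness of the discrete de Rham sequence used in the algebraic step.
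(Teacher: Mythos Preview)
The paper does not prove this proposition at all: it is quoted verbatim from \cite[Theorem~4.1]{hiptwuzheng2012} (note the \texttt{\textbackslash qed} immediately after the statement and the sentence ``The following result is taken from~\cite[Theorem~4.1]{hiptwuzheng2012}'' preceding it). So there is no in-paper argument to compare against; you are reconstructing a proof that the authors deliberately outsource.

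Your sketch is broadly in the spirit of the cited work (regular decomposition plus level-wise quasi-interpolation, with localization to the refined region), but one step is too glib. Your argument for $\norm{\vv}{\Hcurl\Omega}\le C\,\enorm{\vv}_{\SY_L}$ invokes only ``finite overlap of supports at each level''. That handles the intra-level cross terms, but in a multilevel decomposition the sum $\vv=\sum_{\ell}\sum_i \vv_{\ell,i}$ also produces cross-level terms $\edual{\vv_{\ell,i}}{\vv_{\ell',i'}}$ with $\ell\neq\ell'$, and these do \emph{not} vanish by a simple overlap count (the number of levels is unbounded). One needs a strengthened Cauchy--Schwarz inequality across levels, typically with geometric decay $\lvert\edual{\vv_{\ell,i}}{\vv_{\ell',i'}}\rvert\lesssim q^{|\ell-\ell'|}\norm{\vv_{\ell,i}}{}\norm{\vv_{\ell',i'}}{}$ coming from an inverse estimate on the finer component versus an approximation estimate on the coarser one. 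Without this, the lower bound is not established.

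A second point: in your induction step you write that $\vv-J_{L-1}\vv$ is ``expandable in the basis $\{\uu_{L,E}\}_{E\in\widehat\EE_L^\star}$ up to a discrete gradient'', and then appeal to exactness of the discrete de Rham complex. But exactness only tells you that a curl-free element of $\ND^1(\widehat\TT_L)$ is a gradient of some function in $\mathcal{S}^1(\widehat\TT_L)$; it does not by itself localize that gradient to $\linhull\{\nabla\eta_{L,\x}:\x\in\widehat\NN_L^\star\}$. You need to argue separately that the potential can be chosen supported near the refined region (e.g., by fixing its value to zero at all old nodes whose patch is unchanged), which again requires the specific structure of NVB refinement. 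This is not a fatal gap, but it is a step you have asserted rather than justified.
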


\subsection{Subspace decomposition of $\boldsymbol{\PP^0(\TT_\bullet)}$ in $\boldsymbol{H^{-1/2}(\Gamma)}$}
It remains to prove the following proposition to conclude the proof of Theorem~\ref{thm:precond}.

\begin{proposition}
  The multilevel norm $\enorm\cdot_{\SX_L}$ associated with  the decompomposition~\eqref{eq:decomp}
  satisfies the equivalence
  \begin{align}\label{eq:precond:prop}
    C^{-1}\norm{\psi}{H^{-1/2}(\Gamma)} \leq \enorm{\psi}_{\SX_L}
    \leq C \, \norm{\psi}{H^{-1/2}(\Gamma)} \quad\text{for all }
    \psi \in\PP^0(\TT_L),
  \end{align}
  where $C>0$ depends only on $\Omega$ and $\widehat\TT_0$.
\end{proposition}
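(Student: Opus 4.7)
The plan is to apply the Oswald/Hiptmair--Mao--Zheng principle (cf.\ the introduction): lift the norm equivalence from the volume $\Hcurl\Omega$ setting (Proposition~\ref{thm:decomp:nedelec}) down to the boundary $H^{-1/2}(\Gamma)$ setting along the surface curl-trace $\vv\mapsto\curl\vv\cdot\normal|_\Gamma$, for which the extension $\extND_\bullet$ of Lemma~\ref{prop:discreteExtension} serves as a stable discrete right inverse between $\PP_*^0(\TT_\bullet)$ and $\ND^1(\widehat\TT_\bullet)$. At the basis level, the curl-trace annihilates gradients $\nabla\eta_{\ell,\x}$ (since $\curl\nabla=0$) and the Nédélec basis functions $\uu_{\ell,E}$ associated to interior edges $E\in\widehat\EE_\ell\setminus\EE_\ell$ (a short computation using $\nabla\eta_{v_i}\parallel\normal$ on the tetrahedral face opposite $v_i$ shows that $\curl\uu_{\ell,E}\cdot\normal$ vanishes on every boundary face not containing $E$), whereas for boundary edges $E\in\EE_\ell$ it sends $\uu_{\ell,E}$ to $\varphi_{\ell,E}$ with $\norm{\varphi_{\ell,E}}{H^{-1/2}(\Gamma)}\simeq\norm{\uu_{\ell,E}}{\Hcurl\Omega}$ by Lemma~\ref{lem:localext}.

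For the upper bound $\enorm{\psi}_{\SX_L}\lesssim\norm{\psi}{H^{-1/2}(\Gamma)}$, I would first split off the mean $c\cdot 1_\Gamma\in\PP^0(\TT_0)=\SX_0$ with $|c|\lesssim\norm{\psi}{H^{-1/2}(\Gamma)}$; then set $\vv:=\extND_L(\psi-c\cdot 1_\Gamma)\in\SY_L$, decompose $\vv$ stably via Proposition~\ref{thm:decomp:nedelec}, and apply the curl-trace. The gradient and interior-edge contributions vanish, leaving a decomposition of $\psi-c\cdot 1_\Gamma$ in $\SX_L$ whose sum of squared $H^{-1/2}(\Gamma)$-norms is bounded by $\norm{\vv}{\Hcurl\Omega}^2\lesssim\norm{\psi}{H^{-1/2}(\Gamma)}^2$ via Lemma~\ref{lem:localext}. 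For the lower bound $\norm{\psi}{H^{-1/2}(\Gamma)}\lesssim\enorm{\psi}_{\SX_L}$, I would reverse the construction: given an admissible decomposition $\psi=\psi_0+\sum_{\ell,E\in\EE_\ell^\star}\alpha_{\ell,E}\varphi_{\ell,E}$, lift the mean-free part of $\psi_0$ by $\extND_0$ and each $\alpha_{\ell,E}\varphi_{\ell,E}$ by $\alpha_{\ell,E}\uu_{\ell,E}\in\SY_{\ell,E}$, apply the norm equivalence of Proposition~\ref{thm:decomp:nedelec} to the resulting $\vv\in\SY_L$, and use that the curl-trace is bounded $\Hcurl\Omega\to H^{-1/2}(\Gamma)$.

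The main obstacle will be matching the two index families $\widehat\EE_\ell^\star$ and $\EE_\ell^\star$. A case distinction (new boundary edges; boundary edges with shrinking Haar support) should yield $\EE_\ell^\star\subseteq\widehat\EE_\ell^\star\cap\EE_\ell$, which is exactly what makes the lifting in the lower bound admissible, since each $\uu_{\ell,E}$ then lies in a Nédélec decomposition subspace $\SY_{\ell,E}$. The reverse inclusion can fail, however: a purely interior NVB bisection can shrink the Nédélec support of a boundary edge $E$ without altering any of the boundary triangles adjacent to $E$, so that $E\in\widehat\EE_\ell^\star\setminus\EE_\ell^\star$ while $\varphi_{\ell,E}=\varphi_{\ell-1,E}$. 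Such offending contributions can then be reassigned to the earliest ancestor level $k(E)\leq\ell$ with $E\in\EE_{k(E)}^\star$. The technical heart of the upper bound will therefore be a combinatorial argument, based on $\gamma$-shape regularity of NVB in 3D, that caps the number of such reassignments per edge uniformly, so that the merged coefficients at level $k(E)$ remain controlled by Cauchy--Schwarz.
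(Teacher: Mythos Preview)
Your plan coincides with the paper's proof: split off the mean value, lift via $\extND_\bullet$, apply the N\'ed\'elec decomposition of Proposition~\ref{thm:decomp:nedelec}, and descend along the curl-trace using Lemma~\ref{lem:localext} (which also covers the vanishing of gradient and interior-edge contributions). Two points of comparison are worth recording. For the lower bound, the paper lifts each $\psi_{\ell,E}$ via the \emph{general} extension $\extND_\ell$, which lands only in $\SY_\ell=\ND^1(\widehat\TT_\ell)$ and not in a single one-dimensional subspace; your more direct lift $\alpha_{\ell,E}\varphi_{\ell,E}\mapsto\alpha_{\ell,E}\uu_{\ell,E}\in\SY_{\ell,E}$ (together with the inclusion $\EE_\ell^\star\subseteq\widehat\EE_\ell^\star$ that you check) immediately yields an admissible decomposition for the $\SY_L$-multilevel norm and is arguably cleaner. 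For the upper bound, the paper simply replaces $\sum_{E\in\widehat\EE_\ell^\star}$ by $\sum_{E\in\EE_\ell^\star}$ in a single line after noting that interior edges trace to zero; it does \emph{not} carry out the reassignment/bounded-overlap argument you anticipate as the ``technical heart.'' The scenario you flag---a boundary edge lying in $\widehat\EE_\ell^\star\setminus\EE_\ell^\star$ because 3D NVB closure shrinks the N\'ed\'elec patch without touching the adjacent boundary triangles---is a genuine subtlety, but the paper glosses over it rather than resolving it combinatorially.
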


\begin{proof}[Proof of lower estimate in~\eqref{eq:precond:prop}]
Let $\psi\in\PP^0(\TT_L)$ with arbitrary decomposition
\begin{align}\label{eq:as:lower}
 \psi = \psi_0 + \psi_*, \quad \psi_*=\sum_{\ell=1}^L \sum_{E\in\EE_\ell^\star} \psi_{\ell,E}
 \quad \text{with} \quad \psi_0\in \SX_0 \text{ and } \psi_{\ell,E} \in \SX_{\ell,E}.
\end{align}
Note that $\SX_{\ell,E} \subset \PP^0_*(\TT_\ell)$.
Recall the extension operator $\extND_\ell$ from Lemma~\ref{prop:discreteExtension}.
Define $\vv_* := \sum_{\ell=1}^L \sum_{E\in\EE_\ell^\star} \extND_\ell\psi_{\ell,E} \in \SY_L$.
Then, $\curl\vv_* \cdot \normal|_\Gamma = \psi_*$ and hence 
\begin{align*}
 \norm{\psi_*}{H^{-1/2}(\Gamma)}
 \lesssim \norm{\curl\vv_*}{\Hdiv\Omega} = \norm{\curl\vv_*}{L^2(\Omega)}
 \reff{eq:Hcurl:prop}\lesssim \enorm{\vv_*}_{\SY_L}^2
\end{align*}
from the continuity of the trace operator in $\Hdiv\Omega$.
Moreover, the triangle inequality, the lower bound from Proposition~\ref{thm:decomp:nedelec}, and Lemma~\ref{prop:discreteExtension} show that
\begin{align*}
 &\norm{\psi}{H^{-1/2}(\Gamma)}^2 
 \lesssim \norm{\psi_0}{H^{-1/2}(\Gamma)}^2 + \norm{\psi_*}{H^{-1/2}(\Gamma)}^2
\lesssim \norm{\psi_0}{H^{-1/2}(\Gamma)}^2 + \enorm{\vv_*}_{\SY_L}^2
 \\&\quad
 \leq \norm{\psi_0}{H^{-1/2}(\Gamma)}^2 
 + \sum_{\ell=1}^L \sum_{E\in\EE_\ell^\star} \norm{\extND_\ell\psi_{\ell,E}}{\Hcurl\Omega}^2 
 \reff{eq:prop:discreteExtension}\lesssim 
 \norm{\psi_0}{H^{-1/2}(\Gamma)}^2 + \sum_{\ell=1}^L \sum_{E\in\EE_\ell^\star}
 \norm{\psi_{\ell,E}}{H^{-1/2}(\Gamma)}^2.
\end{align*}%
Taking the infimum over all possible decompositions~\eqref{eq:as:lower}, we derive the lower estimate in~\eqref{eq:precond:prop} by definition~\eqref{eq:multilevelnorm} of the multilevel norm.
\end{proof}

\begin{proof}[Proof of upper estimate in~\eqref{eq:precond:prop}]
Let $\psi \in \PP^0(\TT_L)$. Define $\psi_{00} := \dual{\psi}1_\Gamma/|\Gamma|$ and $\psi_* := \psi - \psi_{00} \in \PP^0_*(\TT_L)$. Note that
\begin{align}\label{eq:precond:dp1}
 \norm{\psi_*}{H^{-1/2}(\Gamma)} 
 \le \norm{\psi}{H^{-1/2}(\Gamma)} + \norm{\psi_{00}}{H^{-1/2}(\Gamma)}
 \le \big( 1 + \norm{1/|\Gamma|}{H^{1/2}(\Gamma)} \big) \, \norm{\psi}{H^{-1/2}(\Gamma)}.
\end{align}%
With Lemma~\ref{prop:discreteExtension}, choose $\vv = \extND_L \psi_* \in \SY_L = \ND^1(\widehat\TT_L)$. Note that $\psi_* = \curl \vv \cdot \normal|_\Gamma$ and  $\norm{\vv}{\Hcurl\Omega} \lesssim \norm{\psi_*}{H^{-1/2}(\Gamma)}$. The upper bound in Proposition~\ref{thm:decomp:nedelec} further provides $\vv_0 \in \SY_0$, $\vv_{\ell,E} \in \SY_{\ell,E}$, and $\vv_{\ell,\x} \in \SY_{\ell,\x}$ such that
\begin{align*}
 \vv = \vv_0 
 + \sum_{\ell=1}^L\bigg(\sum_{E\in\widehat\EE_\ell^\star} \vv_{\ell,E} 
 + \sum_{\x\in\widehat\NN_\ell^\star} \vv_{\ell,\x}\bigg)
\end{align*}
as well as
\begin{align}\label{eq:precond:dp2}
 \norm{\vv_0}{\Hcurl\Omega}^2 
 + \sum_{\ell=1}^L \bigg( \sum_{E\in\widehat\EE_\ell^\star} \norm{\vv_{\ell,E}}{\Hcurl\Omega}^2 
 +\! \sum_{\x\in\widehat\NN_\ell^\star} \norm{\vv_{\ell,\x}}{\Hcurl\Omega}^2 \bigg)
 \reff{eq:Hcurl:prop}\lesssim \norm{\vv}{\Hcurl\Omega}^2.
\end{align}
Observe that $\curl \vv_{\ell,\x} = 0$, since $\vv_{\ell,\x} \in \SY_{\ell,\x} = \linhull\{\nabla \eta_{\ell,\x}\}$. Thus, we see that
\begin{align*}
 \psi 
 = \psi_{00} + \psi_*
 = \psi_{00} + \curl \vv \cdot \normal|_\Gamma
 &= \psi_{00} + \curl \vv_0 \cdot \normal|_\Gamma
 + \sum_{\ell=1}^L \sum_{E\in\widehat\EE_\ell^\star} \curl \vv_{\ell,E} \cdot \normal|_\Gamma.
 \\&
 = \psi_{00} + \curl \vv_0 \cdot \normal|_\Gamma
 + \sum_{\ell=1}^L \sum_{E\in\EE_\ell^\star} \curl \vv_{\ell,E} \cdot \normal|_\Gamma.
\end{align*}
Note that $\psi_{*0} := \curl \vv_0 \cdot \normal|_\Gamma \in \SX_0 = \PP^0(\TT_0)$ and hence $\psi_{00} + \psi_{*0} \in \SX_0$. Note that
\begin{align}\label{eq:precond:dp3}
 \begin{split}
 &\norm{\psi_{00} + \psi_{*0}}{H^{-1/2}(\Gamma)}
 \le \norm{\psi_{00}}{H^{-1/2}(\Gamma)} + \norm{\curl \vv_0 \cdot \normal}{H^{-1/2}(\Gamma)}
 \\&\quad
 \lesssim \norm{\psi}{H^{-1/2}(\Gamma)} + \norm{\curl \vv_0}{\Hdiv\Omega}
 = \norm{\psi}{H^{-1/2}(\Gamma)} + \norm{\vv_0}{L^2(\Omega)}.
 \end{split}
\end{align}
Due to Lemma~\ref{lem:localext} and $\vv_{\ell,E} \in \SY_{\ell,E} = \linhull\{\uu_{\ell,E}\}$, it holds that $\psi_{\ell,E} := \curl \vv_{\ell,E} \cdot \normal|_\Gamma \in \SX_{\ell,E} = \linhull\{\varphi_{\ell,E}\}$ with $\norm{\psi_{\ell,E}}{H^{-1/2}(\Gamma)} \simeq \norm{\vv_{\ell,E}}{\Hcurl\Omega}$.
We hence see that
\begin{align*}
 \psi = (\psi_{00} + \psi_{*0}) + \sum_{\ell=1}^L \sum_{E\in\EE_\ell^\star} \psi_{\ell,E}
\end{align*}
with
\begin{align*}
  \enorm{\psi}_{\PP^0(\TT_L)}^2 
 &\reff{eq:multilevelnorm}\le \norm{\psi_{00} + \psi_{*0}}{H^{-1/2}(\Gamma)}^2
 + \sum_{\ell=1}^L \sum_{E\in\EE_\ell^\star} \norm{\psi_{\ell,E}}{H^{-1/2}(\Gamma)}^2
 \\&
 \reff{eq:precond:dp3}\lesssim \norm{\psi}{H^{-1/2}(\Gamma)}^2
 + \norm{\vv_0}{L^2(\Omega)}^2 + \sum_{\ell=1}^L \sum_{E\in\EE_\ell^\star} \norm{\vv_{\ell,E}}{\Hcurl\Omega}^2
 \\&
 \reff{eq:precond:dp2}\lesssim \norm{\psi}{H^{-1/2}(\Gamma)}^2 + \norm{\vv}{\Hcurl\Omega}^2
 \reff{eq:prop:discreteExtension}\lesssim \norm{\psi}{H^{-1/2}(\Gamma)}^2 + \norm{\psi_*}{H^{-1/2}(\Gamma)}^2
 \reff{eq:precond:dp1}\lesssim \norm{\psi}{H^{-1/2}(\Gamma)}^2.
\end{align*}
This concludes the proof.
\end{proof}

\newcommand{\XX}{\mathcal{X}}

\section{Proof of Theorem~\ref{theorem:algorithm} (Rate Optimality of Adaptive Algorithm)}
\label{proof:algorithm}%

In the spirit of~\cite{axioms}, we give an abstract analysis, where the precise problem and discretization (i.e., Galerkin BEM with piecewise constants for the weakly-singular integral equation for the 2D and 3D Laplacian) enter only through certain properties of the error estimator. These properties are explicitly stated in Section~\ref{section:axioms}, before Section~\ref{section:pcg} provides general
PCG estimates. 
The remaining sections (Section~\ref{section:proof:thm:a}--\ref{section:proof:cor}) then only exploit these abstract framework to prove Theorem~\ref{theorem:algorithm} and Corollary~\ref{corollary:algorithm}.

\subsection{Axioms of adaptivity}
\label{section:axioms}%
In this section, we recall some structural properties of the residual error estimator \eqref{eq:estimator} which have been identified in~\cite{axioms} to be important and sufficient for the numerical analysis of Algorithm \ref{algorithm}. For the proof, we refer to~\cite{fkmp13,partOne}. We only note that~\eqref{axiom:discrete_reliability} already implies~\eqref{axiom:reliability} with $\Crel \le \Cdrl$ in general; see~\cite[Section~3.3]{axioms}.

For ease of notation, let $\TT_0$ be the fixed initial mesh of Algorithm~\ref{algorithm}. Let $\mathbb{T}:=\refine(\TT_0)$ be the set of all possible meshes that can be obtained by successively refining $\TT_0$.

\begin{proposition}\label{prop:axioms}
There exist constants $\Cstb,\Cred,\Crel>0$ and $0<\qred<1$ which depend only on $\Gamma$ and the $\gamma$-shape regularity, such that the following properties \eqref{axiom:stability}--\eqref{axiom:discrete_reliability} hold:
\begin{enumerate}
\renewcommand{\theenumi}{A\arabic{enumi}}
\bf
\item\label{axiom:stability}
\rm
\textbf{stability on non-refined element domains:} For each mesh $\TT_\coarse\in\mathbb{T}$, all refinements $\TT_\circ\in\operatorname{refine}(\TT_\coarse)$, arbitrary discrete functions $v_\coarse\in\PP^0(\TT_\coarse)$ and $v_\circ\in\PP^0(\TT_\circ)$, and an arbitrary set $\mathcal{U}_\coarse\subseteq\mathcal{T}_\coarse\cap\mathcal{T}_\circ$ of non-refined elements, it holds that
$$|\eta_\circ(\mathcal{U}_\coarse,v_\circ)-\eta_\coarse(\mathcal{U}_\coarse,v_\coarse)| \leq C_{\rm stb}\,\enorm{v_\circ-v_\coarse}.$$
\bf
\item\label{axiom:reduction}
\rm
\textbf{reduction on refined element domains:} For each mesh $\TT_\coarse\in\mathbb{T}$, all refinements $\TT_\circ\in\operatorname{refine}(\TT_\coarse)$, and arbitrary 
$v_\coarse\in\PP^0(\TT_\coarse)$ and $v_\circ\in\PP^0(\TT_\circ)$, it holds that
$$\eta_\circ(\TT_\circ\backslash\TT_\coarse,v_\circ)^2 \le \qred\,\eta_\coarse(\TT_\coarse\backslash\TT_\circ,v_\coarse)^2 + \Cred\,\enorm{v_\circ-v_\coarse}^2.$$
\bf
\item\label{axiom:reliability}
\rm
\textbf{reliability:}
For each mesh $\TT_\coarse\in\mathbb{T}$, the error of the exact discrete solution $\PPhi_\coarse^\exact \in \PP^0(\TT_\coarse)$ of \eqref{eq:galerkin} is controlled by
\begin{align*}
	\enorm{\PPhi^\exact-\PPhi_\coarse^\exact} \le \Crel\,\eta_\coarse(\PPhi_\coarse^\exact).
\end{align*}

\bf
\item\label{axiom:discrete_reliability}
\rm
\textbf{discrete reliability:}
For each mesh $\TT_\coarse\in\mathbb{T}$ and all refinements $\TT_\circ\in\operatorname{refine}(\TT_\coarse)$, there exists a set $\RR_{\coarse,\circ}\subseteq\TT_\coarse$ 
with $\TT_\coarse\backslash\TT_\circ \subseteq \RR_{\coarse,\circ}$ as well as $\#\RR_{\coarse,\circ} \le \Cdrl\,\#(\TT_\coarse\backslash\TT_\circ)$ such that the difference of $\PPhi_\coarse^\exact \in \PP^0(\TT_\coarse)$ and $\PPhi_\circ^\exact \in \PP^0(\TT_\circ)$ is controlled by
\begin{align*}
	\enorm{\PPhi_\circ^\exact-\PPhi_\coarse^\exact} \le \Cdrl\,\eta_\coarse(\RR_{\coarse,\circ},\PPhi_\coarse^\exact).
	\qquad\qed\hspace*{-20mm}
\end{align*}
\end{enumerate}
\end{proposition}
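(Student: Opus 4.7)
The plan is to treat the four axioms separately, noting up front that~\eqref{axiom:reliability} is a formal consequence of~\eqref{axiom:discrete_reliability} by letting $\TT_\circ$ range over all refinements and using the fact that $\bigcup_{\TT_\circ \in \refine(\TT_\coarse)} \PP^0(\TT_\circ)$ is dense in $\H^{-1/2}(\Gamma)$ together with C\'ea's lemma; see~\cite[Section~3.3]{axioms}. Hence, it suffices to establish~\eqref{axiom:stability},~\eqref{axiom:reduction}, and~\eqref{axiom:discrete_reliability}. The common technical tool is the novel inverse estimate
$\norm{h_\coarse^{1/2}\,\nabla_\Gamma V w}{L^2(\Gamma)} \le C_{\rm inv}\,\enorm{w}$
for all $w \in \PP^0(\TT_\circ)$ with $\TT_\circ \in \refine(\TT_\coarse)$, where $C_{\rm inv}$ depends only on $\Gamma$ and $\gamma$-shape regularity; this is the BEM-analogue of standard polynomial inverse estimates and is proved, e.g., in the works cited in the excerpt.

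For~\eqref{axiom:stability}, I would apply the reverse triangle inequality in the $\ell^2$-sum defining $\eta_\coarse(\UU_\coarse,\cdot)$ elementwise (this is legitimate since $h_T$ coincides for $T \in \TT_\coarse \cap \TT_\circ$) to reduce matters to estimating $\big(\sum_{T \in \UU_\coarse} h_T\,\norm{\nabla_\Gamma V(v_\circ - v_\coarse)}{L^2(T)}^2\big)^{1/2}$, and then invoke the above inverse estimate to bound this by $\Cstb\,\enorm{v_\circ - v_\coarse}$. For~\eqref{axiom:reduction}, I would use the triangle inequality on each refined element $T \in \TT_\coarse \setminus \TT_\circ$ together with the geometric fact that bisection shrinks the weights: $\sum_{T' \in \TT_\circ,\, T' \subset T} h_{T'} \le q\,h_T$ for some $q \in (0,1)$ (with $q = 1/2$ in 2D and $q = 1/\sqrt{2}$ in 3D NVB). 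A Young-type splitting combined with the same inverse estimate then yields~\eqref{axiom:reduction} with $\qred < 1$.

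The main obstacle is~\eqref{axiom:discrete_reliability}. My strategy is to start from the Galerkin identity
\[
 \enorm{\PPhi_\circ^\exact - \PPhi_\coarse^\exact}^2
 = \edual{\PPhi_\circ^\exact - \PPhi_\coarse^\exact}{(1 - J_\coarse)(\PPhi_\circ^\exact - \PPhi_\coarse^\exact)}
 = \dual{f - V\PPhi_\coarse^\exact}{(1 - J_\coarse)(\PPhi_\circ^\exact - \PPhi_\coarse^\exact)},
\]
where $J_\coarse : \PP^0(\TT_\circ) \to \PP^0(\TT_\coarse)$ is a Scott--Zhang-type quasi-interpolation which acts as the identity on $\TT_\coarse$-patches of non-refined elements, so that $(1-J_\coarse)\psi$ is supported on a mildly enlarged refinement set $\RR_{\coarse,\circ}$ with $\#\RR_{\coarse,\circ} \le \Cdrl\,\#(\TT_\coarse \setminus \TT_\circ)$ (the cardinality bound being possible thanks to uniform $\gamma$-shape regularity). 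A localized integration-by-parts on each $T \in \RR_{\coarse,\circ}$ turns $\dual{f-V\PPhi_\coarse^\exact}{\cdot}$ into a weighted inner product involving $\nabla_\Gamma(f - V\PPhi_\coarse^\exact)$, which after Cauchy--Schwarz produces the factor $\eta_\coarse(\RR_{\coarse,\circ},\PPhi_\coarse^\exact)$, while the approximation property $\norm{h_\coarse^{-1/2}(1-J_\coarse)\psi}{L^2(\Gamma)} \lesssim \enorm{\psi}$ of $J_\coarse$ supplies a factor $\enorm{\PPhi_\circ^\exact - \PPhi_\coarse^\exact}$. Dividing yields~\eqref{axiom:discrete_reliability}.

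The genuinely nontrivial ingredients are (i) the inverse estimate for $V$ in the weighted $L^2$-setting, which underpins both~\eqref{axiom:stability} and~\eqref{axiom:reduction}, and (ii) the construction of the $\H^{-1/2}$-stable, locality-preserving quasi-interpolation $J_\coarse$ with the right approximation property, which drives~\eqref{axiom:discrete_reliability}. Both are available in the literature cited in the excerpt~\cite{fkmp13,partOne}, so the proposition follows by combining them as outlined.
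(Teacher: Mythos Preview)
The paper itself does not prove this proposition; it merely cites \cite{fkmp13,partOne} (and notes that \eqref{axiom:reliability} follows from \eqref{axiom:discrete_reliability} via \cite[Section~3.3]{axioms}). Your sketch therefore goes well beyond what the paper provides, and its overall architecture --- inverse estimate for \eqref{axiom:stability}--\eqref{axiom:reduction}, Galerkin orthogonality plus a local projector for \eqref{axiom:discrete_reliability} --- is indeed the argument of the cited works.

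That said, two technical slips would make the sketch fail as written. First, in the discrete-reliability step your claimed approximation property $\norm{h_\coarse^{-1/2}(1-J_\coarse)\psi}{L^2(\Gamma)} \lesssim \enorm{\psi}$ is false in the weakly-singular setting, where $\enorm{\cdot}\simeq\norm{\cdot}{\H^{-1/2}(\Gamma)}$: an $h^{-1/2}$-weighted $L^2$-norm cannot be controlled by an $H^{-1/2}$-norm. What is actually used is a Poincar\'e argument (not integration by parts): since $(1-\Pi_\coarse)e$ has mean zero on each $T\in\TT_\coarse$, one subtracts the integral mean of $f-V\PPhi_\coarse^\exact$ to pick up the factor $h_T^{1/2}\norm{\nabla_\Gamma(f-V\PPhi_\coarse^\exact)}{L^2(T)}$, leaving the factor $\norm{h_\coarse^{1/2}(1-\Pi_\coarse)e}{L^2(\Gamma)}$, which is then bounded by $\enorm{e}$ via a local inverse estimate for piecewise constants in $\H^{-1/2}$. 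Note also that for $\PP^0$ the elementwise $L^2$-projection $\Pi_\coarse$ already has the required locality --- it is the identity on each $T\in\TT_\coarse\cap\TT_\circ$ --- so no Scott--Zhang operator and no patch enlargement are needed; one may take $\RR_{\coarse,\circ}=\TT_\coarse\setminus\TT_\circ$. Second, in \eqref{axiom:reduction} the relevant geometric fact is $\max_{T'\subset T} h_{T'}\le q\,h_T$ (each child is uniformly smaller), not $\sum_{T'\subset T} h_{T'}\le q\,h_T$, which is false already for a single bisection.
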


\subsection{Energy estimates for the PCG solver}
\label{section:pcg}%
This section collects some auxiliary results which rely on the use of PCG and, in particular,
PCG with an optimal preconditioner. We first note the following Pythagoras identity.

\begin{lemma}
Let $\A_\coarse, \P_\coarse \in \R^{N \times N}$ be symmetric and positive definite, $\b_\coarse\in \R^N$, $\x_\coarse^\exact := \A_\coarse^{-1} \b_\coarse$, $\x_{\coarse 0} \in \R^N$ and $\x_{\coarse k}$ the iterates of the PCG algorithm.\\
There holds the Pythagoras identity
\begin{align}\label{eq:gal_orth}
\enorm{\PPhi_\coarse^\exact-\PPhi_{\coarse (k+1)}}^2+\enorm{\PPhi_{\coarse (k+1)}-\PPhi_{\coarse k}}^2=\enorm{\PPhi_\coarse^\exact-\PPhi_{\coarse k}}^2\quad\textrm{for all }k\in\N_0.
\end{align}
\end{lemma}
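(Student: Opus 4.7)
The plan is to reduce the claim to the well-known Galerkin orthogonality for the (unpreconditioned) CG method and then translate it twice: first through the PCG/CG correspondence, and then through the identification of $\R^N$ with $\PP^0(\TT_\coarse)$.

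First, recall from the discussion preceding Lemma~\ref{lemma:pcg} that the PCG iterates $\x_{\coarse k}$ satisfy $\x_{\coarse k} = \P_\coarse^{-1/2}\widetilde\x_{\coarse k}$, where $\widetilde\x_{\coarse k}$ are the iterates of the plain CG algorithm applied to $\widetilde\A_\coarse = \P_\coarse^{-1/2}\A_\coarse\P_\coarse^{-1/2}$, $\widetilde\b_\coarse = \P_\coarse^{-1/2}\b_\coarse$, and $\widetilde\x_{\coarse 0} = \P_\coarse^{1/2}\x_{\coarse 0}$. Hence, it suffices to establish the Pythagoras identity for plain CG with respect to the $\widetilde\A_\coarse$-norm and then use the norm equivalences~\eqref{eq:normequi1} and~\eqref{eq:normequi2}.

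Second, for plain CG the identity is standard. The key fact is that $\widetilde\x_{\coarse(k+1)}$ is obtained from $\widetilde\x_{\coarse k}$ via an exact line search along a search direction $\widetilde\p_{\coarse k}$, where the step length $\alpha_k$ is chosen to minimize $\alpha\mapsto\norm{\widetilde\x_\coarse^\exact-\widetilde\x_{\coarse k}-\alpha\,\widetilde\p_{\coarse k}}{\widetilde\A_\coarse}^2$. The first-order optimality condition for this scalar minimization yields the $\widetilde\A_\coarse$-orthogonality
\begin{align*}
  (\widetilde\x_\coarse^\exact-\widetilde\x_{\coarse(k+1)})\cdot\widetilde\A_\coarse\,(\widetilde\x_{\coarse(k+1)}-\widetilde\x_{\coarse k}) = 0,
\end{align*}
since $\widetilde\x_{\coarse(k+1)}-\widetilde\x_{\coarse k}=\alpha_k\,\widetilde\p_{\coarse k}$. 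Expanding $\norm{\widetilde\x_\coarse^\exact-\widetilde\x_{\coarse k}}{\widetilde\A_\coarse}^2$ via the splitting $\widetilde\x_\coarse^\exact-\widetilde\x_{\coarse k}=(\widetilde\x_\coarse^\exact-\widetilde\x_{\coarse(k+1)})+(\widetilde\x_{\coarse(k+1)}-\widetilde\x_{\coarse k})$ makes the cross term vanish, which gives the Pythagoras identity in the $\widetilde\A_\coarse$-norm.

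Third, applying~\eqref{eq:normequi1} with $\widetilde\y_\coarse\in\{\widetilde\x_\coarse^\exact-\widetilde\x_{\coarse k},\,\widetilde\x_\coarse^\exact-\widetilde\x_{\coarse(k+1)},\,\widetilde\x_{\coarse(k+1)}-\widetilde\x_{\coarse k}\}$ (and the corresponding $\y_\coarse=\P_\coarse^{-1/2}\widetilde\y_\coarse$) turns the identity into its PCG form
\begin{align*}
  \norm{\x_\coarse^\exact-\x_{\coarse(k+1)}}{\A_\coarse}^2+\norm{\x_{\coarse(k+1)}-\x_{\coarse k}}{\A_\coarse}^2=\norm{\x_\coarse^\exact-\x_{\coarse k}}{\A_\coarse}^2.
\end{align*}
Finally, \eqref{eq:normequi2} identifies each $\norm{\,\cdot\,}{\A_\coarse}^2$ with the corresponding $\enorm{\,\cdot\,}^2$ on $\PP^0(\TT_\coarse)$, which yields~\eqref{eq:gal_orth}. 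The only non-routine ingredient is the $\widetilde\A_\coarse$-orthogonality of the CG update to the new error; everything else is bookkeeping through the two norm equivalences.
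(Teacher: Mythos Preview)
Your argument is correct. Both you and the paper reduce to plain CG via the PCG/CG correspondence and then translate back with~\eqref{eq:normequi1}--\eqref{eq:normequi2}; the only difference is the justification of the $\widetilde\A_\coarse$-orthogonality in the middle. The paper invokes the global Krylov-space characterization: $\widetilde\x_{\coarse k}$ is the $\widetilde\A_\coarse$-orthogonal projection of $\widetilde\x_\coarse^\exact$ onto the (affine) Krylov space $\widetilde\x_{\coarse0}+\mathcal{K}_k$, and nestedness $\mathcal{K}_k\subseteq\mathcal{K}_{k+1}$ then yields Pythagoras by standard Hilbert-space projection theory. You instead use the local one-step line-search optimality of CG, which directly gives $(\widetilde\x_\coarse^\exact-\widetilde\x_{\coarse(k+1)})\perp_{\widetilde\A_\coarse}(\widetilde\x_{\coarse(k+1)}-\widetilde\x_{\coarse k})$. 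Your route is slightly more elementary since it avoids the full Krylov minimization property, while the paper's route makes the connection to best approximation in nested subspaces more transparent; either way the remaining bookkeeping is identical.
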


\begin{proof}
\def\residual{\widetilde{\boldsymbol{r}}}%
According to the definition of PCG (and CG), it holds that
\begin{align*}
 \norm{\widetilde\x_\coarse^\exact-\widetilde\x_{\coarse k}}{\widetilde \A_\coarse}
 = \min_{\widetilde \y_\coarse\in\mathcal{K}_k(\widetilde \A_\coarse,\widetilde \b_\coarse,\widetilde\x_{\coarse0})}\norm{\widetilde \x_\coarse^\exact-\widetilde \y_\coarse}{\widetilde \A_\coarse},
\end{align*}
where $\mathcal{K}_k(\widetilde \A_\coarse,\widetilde \b_\coarse,\widetilde x_{\coarse0}):={\rm span}\{\residual_{\bullet0},\widetilde \A_\coarse \residual_{\bullet0},\ldots,\widetilde \A_\coarse^{k-1} \residual_{\bullet0}\}$
with $\residual_{\bullet0}:=\widetilde \b_\coarse-\widetilde\A_\coarse\widetilde\x_{\coarse0}$.
According to Linear Algebra, $\widetilde \x_{\coarse k}$ is the orthogonal projection of $\widetilde \x_\coarse^\exact$ in $\mathcal{K}_k(\widetilde \A_\coarse,\widetilde \b_\coarse,\widetilde\x_{\coarse0})$ with respect to the matrix norm $\norm{\cdot}{\widetilde \A_\coarse}$. From nestedness $\mathcal{K}_k(\widetilde \A_\coarse,\widetilde \b_\coarse,\widetilde\x_{\coarse0})\subseteq\mathcal{K}_{k+1}(\widetilde \A_\coarse,\widetilde \b_\coarse,\widetilde\x_{\coarse0})$, it thus follows that
\begin{align*}
\norm{\widetilde\x_\coarse^\exact-\widetilde\x_{\coarse k}}{\widetilde\A_\coarse}^2=\norm{\widetilde\x_\coarse^\exact-\widetilde\x_{\coarse (k+1)}}{\widetilde\A_\coarse}^2+\norm{\widetilde\x_{\coarse(k+1)}-\widetilde\x_{\coarse k}}{\widetilde\A_\coarse}^2.
\end{align*}
Together with \eqref{eq:normequi1}	and \eqref{eq:normequi2}, this proves \eqref{eq:gal_orth}.
\end{proof}%

The following lemma collects some estimates which follow from the contraction property~\eqref{eq2:pcg} of PCG.

\begin{lemma}\label{lemma2:pcg}
Algorithm~\ref{algorithm} guarantees the following estimates for all $(j,k) \in \QQ$ with $k \ge 1$:
\begin{itemize}
\item[\rm(i)] $\enorm{\phi_j^\exact - \PPhi_{jk}} \le \qpcg \, \enorm{\phi_j^\exact - \PPhi_{j(k-1)}}$
\item[\rm(ii)]
$\enorm{\PPhi_{jk}-\PPhi_{j(k-1)}} \leq (1+\qpcg) \, \enorm{\PPhi_j^\exact-\PPhi_{j(k-1)}}$
\item[\rm(iii)] 
$\enorm{\PPhi_j^\exact-\PPhi_{j(k-1)}}
 \leq (1-\qpcg)^{-1} \, \enorm{\PPhi_{jk}-\PPhi_{j(k-1)}}$
\item[\rm(iv)]
$\enorm{\PPhi_j^\exact-\PPhi_{jk}}
 \leq \qpcg (1-\qpcg)^{-1} \, \enorm{\PPhi_{jk}-\PPhi_{j(k-1)}}$
\end{itemize}
\end{lemma}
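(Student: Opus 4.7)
The plan is to derive all four estimates by combining the contraction property~\eqref{eq2:pcg} of PCG (translated to the energy norm via~\eqref{eq:normequi2}) with the triangle and reverse triangle inequalities. The main point is that once assertion~(i) is in hand, assertions~(ii)--(iv) are immediate consequences of one-line inequalities.

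First, I would establish~(i). Since Algorithm~\ref{algorithm} uses an optimal preconditioner, Lemma~\ref{lemma:pcg} applies and yields the contraction $\norm{\x_j^\exact - \x_{j(k)}}{\A_j} \le \qpcg\,\norm{\x_j^\exact - \x_{j(k-1)}}{\A_j}$ on the level of coefficient vectors. The identity~\eqref{eq:normequi2} states $\enorm{\PPhi_j^\exact - \PPhi_{jk}} = \norm{\x_j^\exact - \x_{jk}}{\A_j}$, which translates the contraction directly into the energy-norm estimate (i).

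Next, assertion~(ii) follows from the triangle inequality together with~(i):
\begin{align*}
\enorm{\PPhi_{jk}-\PPhi_{j(k-1)}}
\le \enorm{\PPhi_{jk}-\PPhi_j^\exact} + \enorm{\PPhi_j^\exact-\PPhi_{j(k-1)}}
\le (1+\qpcg)\,\enorm{\PPhi_j^\exact-\PPhi_{j(k-1)}}.
\end{align*}
For assertion~(iii) I would use the reverse triangle inequality and~(i), estimating
\begin{align*}
\enorm{\PPhi_j^\exact-\PPhi_{j(k-1)}}
\le \enorm{\PPhi_j^\exact-\PPhi_{jk}} + \enorm{\PPhi_{jk}-\PPhi_{j(k-1)}}
\le \qpcg\,\enorm{\PPhi_j^\exact-\PPhi_{j(k-1)}} + \enorm{\PPhi_{jk}-\PPhi_{j(k-1)}},
\end{align*}
and then rearranging using $0<\qpcg<1$ to absorb the first right-hand side term. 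Finally, assertion~(iv) is obtained by chaining~(i) with~(iii): apply~(i) to bound $\enorm{\PPhi_j^\exact-\PPhi_{jk}}$ by $\qpcg\,\enorm{\PPhi_j^\exact-\PPhi_{j(k-1)}}$, and then invoke~(iii) to replace the latter by $(1-\qpcg)^{-1}\,\enorm{\PPhi_{jk}-\PPhi_{j(k-1)}}$.

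There is no serious obstacle in this lemma; the only subtlety is ensuring that Lemma~\ref{lemma:pcg} may indeed be invoked, which is guaranteed because Algorithm~\ref{algorithm} is run with an optimal preconditioner $\P_j$ (so~\eqref{eq1:pcg} holds with a uniform $\Cpcg$, independent of $j$). The rest is elementary bookkeeping with the triangle inequality.
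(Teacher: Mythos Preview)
Your proposal is correct and matches the paper's own proof: the paper also derives~(i) from~\eqref{eq2:pcg} via the identity~\eqref{eq:normequi2}, and then obtains~(ii)--(iv) from~(i) and the triangle inequality.
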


\begin{proof}
According to~\eqref{eq:normequi2}, estimate~\eqref{eq2:pcg} proves~(i).
The estimates~(ii)--(iv) follow from~(i) and the triangle inequality.
\end{proof}

\subsection{Proof of Theorem~\ref{theorem:algorithm}(a)}
\label{section:proof:thm:a}%
With reliability~\eqref{axiom:reliability} and stability~\eqref{axiom:stability}, we see that
\begin{align*}
 \enorm{\phi^\exact - \PPhi_{jk}}
 \le \enorm{\phi^\exact - \PPhi_j^\exact}
 + \enorm{\PPhi_j^\exact - \PPhi_{jk}}
 &\reff{axiom:reliability}\lesssim \eta_j(\PPhi_j^\exact) + \enorm{\PPhi_j^\exact - \PPhi_{jk}}
 \\&
 \reff{axiom:stability}\lesssim \eta_j(\PPhi_{jk}) + \enorm{\PPhi_j^\exact - \PPhi_{jk}}
 \quad \text{for all } (j,k) \in \QQ.
\end{align*}
With Lemma~\ref{lemma2:pcg}(iv), we hence prove the reliability estimate~\eqref{eq:algorithm:reliability}.

According to~\cite{invest}, it holds that
\begin{align*}
 \eta_j(\PPhi_{jk})
 &\lesssim \norm{h_j^{1/2}(\phi^\exact - \PPhi_{jk})}{L^2(\Gamma)}
 + \enorm{\phi^\exact - \PPhi_{jk}}
 \\&
 \le \norm{h_j^{1/2}(\phi^\exact - \PPhi_{jk})}{L^2(\Gamma)}
 + \enorm{\phi^\exact - \PPhi_j^\exact}
 + \enorm{\PPhi_j^\exact - \PPhi_{jk}}
\end{align*}
Let $\mathbb{G}_j: \H^{-1/2}(\Gamma) \to \PP^0(\TT_j)$ be the Galerkin projection. Let $\Pi_j: L^2(\Gamma) \to \PP^0(\TT_j)$ be the $L^2$-orthogonal projection. With the C\'{e}a lemma and a duality argument (see, e.g.,~\cite[Theorem~4.1]{MR2199747}), we see that

\begin{align*}
\enorm{(1-\mathbb{G}_j)\psi}\leq\enorm{(1-\Pi_j)\psi}\lesssim\norm{h_j^{1/2}\,\psi}{L^2(\Gamma)}\quad\textrm{for all~}\psi\in L^2(\Gamma).
\end{align*}
Hence, for $\psi=\pphi^\exact-\PPhi_{jk}$, it follows that
\begin{align*}
\enorm{\pphi^\exact-\PPhi_j^\exact}
= \enorm{(1-\mathbb{G}_j)\pphi^\exact}
= \enorm{(1-\mathbb{G}_j)(\pphi^\exact-\PPhi_{jk})}
\lesssim \norm{h_j^{1/2}(\pphi^\exact-\PPhi_{jk})}{L^2(\Gamma)}.
\end{align*}
Combining the latter estimates, we see that
\begin{align*}
 \eta_j(\PPhi_{jk})
 &\lesssim \norm{h_j^{1/2}(\phi^\exact - \PPhi_{jk})}{L^2(\Gamma)}
 + \enorm{\PPhi_j^\exact - \PPhi_{jk}}.
\end{align*}
With Lemma~\ref{lemma2:pcg}(iv), we hence prove the efficiency estimate~\eqref{eq:algorithm:efficiency}.\qed

\subsection{Proof of Theorem~\ref{theorem:algorithm}(b)}
\label{section:linear_convergence}
The following lemma is the heart of the proof of Theorem~\ref{theorem:algorithm}(b).

\begin{lemma}\label{lem:contr}
Consider Algorithm~\ref{algorithm} for arbitrary parameters $0 < \theta \le 1$ and $\lambda > 0$. 
There exist constants $0<\mu,\qctr<1$ such that
\begin{align*}
\Delta_{jk}:=\mu\,\eta_j(\PPhi_{jk})^2+\enorm{\phi^\exact-\PPhi_{jk}}^2
\quad \text{for } (j, k) \in \QQ
\end{align*}
satisfies, for all $j \in \N_0$, that
\begin{align}\label{eq1:lem:contr}
\Delta_{j(k+1)}\leq\qctr\,\Delta_{jk}\quad\textrm{for all }0\leq k<k+1<\k
\end{align}
as well as 
\begin{align}\label{eq2:lem:contr}
\Delta_{(j+1)0}\leq\qctr\,\Delta_{j(\k-1)}\quad\textrm{for }k=0.
\end{align}
Moreover, for all $(j',k'),(j,k)\in\QQ$, it holds that
\begin{align}\label{eq:contr3}
\Delta_{j'k'}\leq\qctr^{|(j',k')|-|(j,k)|}\,\Delta_{jk}\quad\textrm{provided that $(j',k')>(j,k)$, $k'<\k(j')$, and $k<\k(j)$.}
\end{align}
The constants $0 < \mu, \qctr < 1$ depend only on $\lambda$, $\theta$, $\qpcg$, and the constants in~\eqref{axiom:stability}--\eqref{axiom:reliability}.
\end{lemma}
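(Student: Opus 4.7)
The plan is to establish the two single-step contractions~(\ref{eq1:lem:contr}) and (\ref{eq2:lem:contr}) separately and then derive the iterated estimate (\ref{eq:contr3}) by telescoping. In each case, I will combine the axioms~\eqref{axiom:stability}--\eqref{axiom:reliability}, the PCG contraction of Lemma~\ref{lemma:pcg}, and the Pythagoras identity~\eqref{eq:gal_orth}. The parameter $\mu>0$ is chosen small enough that the residual cross-terms in the combined estimates are absorbed, leaving a genuine contraction factor $0<\qctr<1$.

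For (\ref{eq1:lem:contr}), the mesh $\TT_j$ is fixed, so I can work purely with the PCG iterates in $\PP^0(\TT_j)$. Stability~\eqref{axiom:stability} yields
\begin{align*}
 \eta_j(\PPhi_{j(k+1)})^2 \le (1+\delta)\,\eta_j(\PPhi_{jk})^2 + C(\delta)\,\Cstb^2\,\enorm{\PPhi_{j(k+1)}-\PPhi_{jk}}^2,
\end{align*}
while Pythagoras gives $\enorm{\phi^\exact - \PPhi_{j(k+1)}}^2 = \enorm{\phi^\exact - \PPhi_{jk}}^2 - \enorm{\PPhi_{j(k+1)}-\PPhi_{jk}}^2$. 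The decisive ingredient is that the condition $k+1<\k(j)$ means the stopping criterion in step~(iv) is violated at iterate $k+1$, i.e.\
\begin{align*}
 \enorm{\PPhi_{j(k+1)} - \PPhi_{jk}} > \lambda\,\eta_j(\PPhi_{j(k+1)});
\end{align*}
combined with stability, this yields a lower bound $\enorm{\PPhi_{j(k+1)}-\PPhi_{jk}}^2 \ge c_\lambda\,\eta_j(\PPhi_{jk})^2$ with $c_\lambda>0$ depending on $\lambda$, $\delta$, $\Cstb$. Substituted into Pythagoras, this produces a strict decrement of the $\eta$-part of $\Delta$. For the energy part, I combine PCG contraction with Galerkin orthogonality, reliability~\eqref{axiom:reliability}, and stability~\eqref{axiom:stability} to derive the key estimate
\begin{align*}
 \enorm{\phi^\exact - \PPhi_{j(k+1)}}^2 \le q_2\,\enorm{\phi^\exact - \PPhi_{jk}}^2 + c_2\,\eta_j(\PPhi_{jk})^2
\end{align*}
with $q_2 = (\qpcg^2 + 2\Crel^2\Cstb^2)/(1+2\Crel^2\Cstb^2)<1$. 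A sufficiently small choice of $\mu\in(0,\lambda^2)$, balanced with $\delta$, absorbs $c_2\eta_j(\PPhi_{jk})^2$ into the $\eta$-part and yields $\Delta_{j(k+1)} \le \qctr\,\Delta_{jk}$.

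For (\ref{eq2:lem:contr}), I combine this same energy estimate for the last PCG step $\PPhi_{j(\k-1)}\to\PPhi_{j\k}$ with the estimator reduction induced by the mesh refinement $\TT_j\to\TT_{j+1}$. Since $\PPhi_{j\k}\in\PP^0(\TT_j)\subseteq\PP^0(\TT_{j+1})$, combining stability~\eqref{axiom:stability} on $\TT_j\cap\TT_{j+1}$ and reduction~\eqref{axiom:reduction} on $\TT_{j+1}\setminus\TT_j$ (with vanishing function difference) yields
\begin{align*}
 \eta_{j+1}(\PPhi_{j\k})^2 \le \eta_j(\PPhi_{j\k})^2 - (1-\qred)\,\eta_j(\TT_j\setminus\TT_{j+1},\PPhi_{j\k})^2,
\end{align*}
and the D\"orfler property $\eta_j(\MM_j,\PPhi_{j\k})^2\ge\theta\,\eta_j(\PPhi_{j\k})^2$ together with $\MM_j\subseteq\TT_j\setminus\TT_{j+1}$ then implies $\eta_{j+1}(\PPhi_{j\k})^2 \le q_{\rm red,\theta}\,\eta_j(\PPhi_{j\k})^2$ with $q_{\rm red,\theta}=1-(1-\qred)\theta<1$. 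Applying stability once more to relate $\eta_j(\PPhi_{j\k})$ to $\eta_j(\PPhi_{j(\k-1)})$ (the increment $\enorm{\PPhi_{j\k}-\PPhi_{j(\k-1)}}^2$ being absorbed via Pythagoras into $\enorm{\phi^\exact-\PPhi_{j(\k-1)}}^2$) and invoking the energy estimate for the final PCG step from part~1, a careful balancing of $\mu$ and $\delta$ delivers $\Delta_{(j+1)0} \le \qctr\,\Delta_{j(\k-1)}$.

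The main technical obstacle is the contraction of the \emph{exact} energy error $\enorm{\phi^\exact - \PPhi_{jk}}^2$ rather than the discrete Galerkin error $\enorm{\PPhi_j^\exact - \PPhi_{jk}}^2$ (which contracts with factor $\qpcg^2$ directly by Lemma~\ref{lemma:pcg}). This is handled by combining Galerkin orthogonality with the discrete PCG contraction to obtain
\begin{align*}
 \enorm{\phi^\exact - \PPhi_{j(k+1)}}^2 \le \qpcg^2\,\enorm{\phi^\exact - \PPhi_{jk}}^2 + (1-\qpcg^2)\,\enorm{\phi^\exact - \PPhi_j^\exact}^2,
\end{align*}
bounding the last term via reliability and stability, and rearranging to move the hidden $\enorm{\phi^\exact-\PPhi_j^\exact}^2$ cross-term to the left-hand side; the key algebraic point is that the resulting effective factor $q_2$ is strictly below $1$ as soon as $\qpcg<1$. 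Once (\ref{eq1:lem:contr}) and (\ref{eq2:lem:contr}) are established with a common factor $\qctr$ and the same $\mu$, the iterated estimate (\ref{eq:contr3}) follows by induction on $|(j',k')|-|(j,k)|$: the counting convention for $|\cdot|$ guarantees that every unit increment corresponds either to an interior PCG step with $k+1<\k(j)$ (covered by~(\ref{eq1:lem:contr})) or to a transition $(j,\k(j)-1)\to(j+1,0)$ (covered by~(\ref{eq2:lem:contr})), so that $\Delta$ contracts by $\qctr$ at each step.
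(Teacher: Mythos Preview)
Your overall architecture matches the paper's: prove the two single-step contractions separately (interior PCG step via the failed stopping criterion, mesh-refinement step via D\"orfler reduction), then telescope. The ingredients you list---stability, reduction, reliability, the PCG Pythagoras identity~\eqref{eq:gal_orth}, Galerkin orthogonality, and the contraction~\eqref{eq2:pcg}---are exactly the right ones, and your ``key estimate'' with the explicit factor $q_2 = (\qpcg^2 + 2\Crel^2\Cstb^2)/(1+2\Crel^2\Cstb^2)<1$ is correct.

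There is, however, a genuine gap in your parameter balancing. Your constants $q_2$ and $c_2$ are \emph{fixed} (determined by $\qpcg,\Crel,\Cstb$), so the claim that ``a sufficiently small choice of $\mu$ \dots\ absorbs $c_2\,\eta_j(\PPhi_{jk})^2$ into the $\eta$-part'' goes the wrong way: to absorb a fixed $c_2\,\eta^2$ into $\qctr\,\mu\,\eta^2$ you would need $\mu$ \emph{large}, not small, and this conflicts with the smallness of $\mu$ required elsewhere (e.g.\ $\mu\lambda^{-2}\le 1/2$). The paper resolves exactly this tension by introducing a free parameter $\varepsilon>0$ in the splitting
\begin{align*}
 \enorm{\phi^\exact-\PPhi_j^\exact}^2
 = (1-\varepsilon)\,\enorm{\phi^\exact-\PPhi_j^\exact}^2 + \varepsilon\,\enorm{\phi^\exact-\PPhi_j^\exact}^2,
\end{align*}
bounding only the second summand via reliability and stability. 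This replaces your fixed $c_2$ by $2\varepsilon\Crel^2$, which can then be taken small enough that $2\varepsilon\Crel^2 \le (1-\varepsilon)\mu$; the energy contraction factor becomes $1-\varepsilon$ rather than your $q_2$. An equivalent fix in your framework would be to take an explicit convex combination of your key estimate (weight $\alpha$) and the Pythagoras identity (weight $1-\alpha$), so that only $\alpha c_2\,\eta^2$ needs absorption and the Pythagoras decrement $-(1-\alpha)\enorm{\PPhi_{j(k+1)}-\PPhi_{jk}}^2 \le -(1-\alpha)c_\lambda\,\eta^2$ does the absorbing---but you never introduce such an $\alpha$, and it is this missing degree of freedom that makes your balancing claim fail as stated. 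The same issue recurs in your treatment of~\eqref{eq2:lem:contr}. (A minor point: D\"orfler marking gives $\eta_j(\MM_j,\PPhi_{j\k})^2\ge\theta^2\eta_j(\PPhi_{j\k})^2$, not $\theta$.)
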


\begin{proof}
The proof is split into five steps.

{\bf Step~1.}\quad
We fix some constants, which are needed below. 
We note that all these constants depend on $0 < \theta \le 1$ and $\lambda > 0$, but do not require any additional constraint.
First, define 
\begin{align}\label{eq:def:qest}
 0 < \qest := 1-(1-\qred)\theta^2 < 1.
\end{align}
Second, choose $\gamma>0$ such that
\begin{align}
(1+\gamma)\,\qest&\reff{eq:def:qest}<1.\label{pr_contr:aux1}
\end{align}
Third, choose $\mu>0$ such that
\begin{align}\label{eq:0601a}
\mu\,(1+\gamma^{-1})\,\qest\,\Cstb^2(1+\qpcg)^2 ~<~ \frac{1-\qpcg^2}{2}
\qquad \text{and} \quad
\mu\,\lambda^{-2} ~\leq~ \frac{1}{2}.
\end{align}%
Fourth, choose $\varepsilon>0$ such that
\begin{align}\label{eq:0601b}
\varepsilon\,(1-\qpcg)^{-2}+2\,\varepsilon\,\Crel^2\,\Cstb^2\,(1-\qpcg)^{-2} ~\leq~ \frac{1}{2}
\qquad \text{and} \quad
2\,\varepsilon\,\Crel^2 ~\leq~ (1-\varepsilon)\mu.
\end{align}%
Fifth, choose $\kappa>0$ such that
\begin{align}\label{eq:0601c}
2\,\kappa\,\Crel^2 \stackrel{\eqref{pr_contr:aux1}}{<} \big(1-(1+\gamma)\,\qest\big)\,\mu
\qquad \text{and} \quad
2\,\kappa\,\Crel^2\Cstb^2~<~\frac{1-\qpcg^2}{2}.
\end{align}%
With~\eqref{eq:0601a}--\eqref{eq:0601c}, we finally define
\begin{align}
\begin{split}
 0<\qctr:=\max\Big\{&1-\varepsilon \,,\, \big(\mu\,(1+\gamma)\,\qest+2\,\kappa\,\Crel^2\big)\,\mu^{-1}\,,\,1-\kappa,\\
&\big(\mu\,(1+\gamma^{-1})\,\qest\,\Cstb^2\,(1+\qpcg)^2+\qpcg^2+2\,\kappa\,\Crel^2\Cstb^2\big)\Big\}<1.\label{pr_contr:qctr}
\end{split}
\end{align}

{\bf Step~2.}\quad Due to reliability \eqref{axiom:reliability}, stability \eqref{axiom:stability}, and Lemma~\ref{lemma2:pcg}(iii), it follows that
\begin{align*}
&\enorm{\PPhi^\exact-\PPhi_j^\exact}^2=(1-\varepsilon)\enorm{\PPhi^\exact-\PPhi_j^\exact}^2+\varepsilon\,\enorm{\PPhi^\exact-\PPhi_j^\exact}^2
\\& \quad
\stackrel{\eqref{axiom:reliability}}{\leq}(1-\varepsilon)\enorm{\PPhi^\exact-\PPhi_j^\exact}^2+\varepsilon\,\Crel^2\,\eta_j(\PPhi_j^\exact)^2
\\& \quad
\stackrel{\eqref{axiom:stability}}{\leq}(1-\varepsilon)\enorm{\PPhi^\exact-\PPhi_j^\exact}^2+2\,\varepsilon\,\Crel^2\,\big(\eta_j(\PPhi_{jk})^2+\Cstb^2\,\enorm{\PPhi_j^\exact-\PPhi_{jk}}^2\big)
\\& \quad
\!\stackrel{\ref{lemma2:pcg}\rm(iii)}{\leq}\!
(1-\varepsilon)\enorm{\PPhi^\exact-\PPhi_j^\exact}^2+2\,\varepsilon\,\Crel^2\,\eta_j(\PPhi_{jk})^2+2\,\varepsilon\,\Crel^2\,\Cstb^2\,(1-\qpcg)^{-2}\,\enorm{\PPhi_{j(k+1)}-\PPhi_{jk}}^2.
\end{align*}

{\bf Step~3.}\quad
We consider the case $k+1<\k(j)$. 
Step (iv) of Algorithm \ref{algorithm} yields that
\begin{align}\label{pr_contr:aux9}
\eta_j(\PPhi_{j(k+1)})^2<\lambda^{-2}\enorm{\PPhi_{j(k+1)}-\PPhi_{jk}}^2.
\end{align}
Moreover, the Pythagoras identity \eqref{eq:gal_orth} implies that
\begin{align}\label{pr_contr:aux10}\begin{split}
\enorm{\PPhi_j^\exact-\PPhi_{j(k+1)}}^2
&=(1-\varepsilon) \, \enorm{\PPhi_j^\exact-\PPhi_{jk}}^2+\varepsilon\,\enorm{\PPhi_j^\exact-\PPhi_{jk}}^2-\enorm{\PPhi_{j(k+1)}-\PPhi_{jk}}^2.
\end{split}
\end{align}
Further, we note the Pythagoras identity
\begin{align}\label{eq:dp:aux}
 \enorm{\phi^\exact - \PPhi_{j}^\exact}^2 
 + \enorm{\PPhi_{j}^\exact - \PPsi_{j}}^2 
 = \enorm{\phi^\exact - \PPsi_{j}}^2 
 \quad \text{for all } \PPsi_{j} \in \PP^0(\TT_{j}).
\end{align}%
Combining \eqref{pr_contr:aux9}--\eqref{eq:dp:aux} and applying Lemma~\ref{lemma2:pcg}(iii), we see that
\begin{align*}
&\Delta_{j(k+1)}
=\mu\,\eta_j(\PPhi_{j(k+1)})^2+\enorm{\PPhi_j^\exact-\PPhi_{j(k+1)}}^2+\enorm{\PPhi^\exact-\PPhi_j^\exact}^2
\\& \quad
<(1-\varepsilon)\enorm{\PPhi_j^\exact-\PPhi_{jk}}^2+\varepsilon\,\enorm{\PPhi_j^\exact-\PPhi_{jk}}^2
+(\mu\,\lambda^{-2}-1)\enorm{\PPhi_{j(k+1)}-\PPhi_{jk}}^2+\enorm{\PPhi^\exact-\PPhi_j^\exact}^2
\\& \quad
\!\stackrel{\ref{lemma2:pcg}\rm(iii)}{\leq}\!
(1-\varepsilon)\,\enorm{\PPhi_j^\exact-\PPhi_{jk}}^2+\big(\varepsilon\,(1-\qpcg)^{-2}+\mu\,\lambda^{-2}-1\big)\enorm{\PPhi_{j(k+1)}-\PPhi_{jk}}^2+\enorm{\PPhi^\exact-\PPhi_j^\exact}^2.
\intertext{Step 2 yields that}
& \quad
\leq(1-\varepsilon)\big(\enorm{\PPhi_j^\exact-\PPhi_{jk}}^2+\enorm{\PPhi^\exact-\PPhi_j^\exact}^2\big)+2
\,\varepsilon\,\Crel^2\,\eta_j(\PPhi_{jk})^2
\\& \qquad
+\big(\varepsilon\,(1-\qpcg)^{-2}+\mu\,\lambda^{-2}-1+2\,\varepsilon\,\Crel^2\,\Cstb^2\,(1-\qpcg)^{-2}\big)\enorm{\PPhi_{j(k+1)}-\PPhi_{jk}}^2.
\end{align*}
Using~\eqref{eq:0601a}--\eqref{eq:0601b} and~\eqref{eq:dp:aux}, we thus see that
\begin{align*}
\Delta_{j(k+1)}
\leq (1-\varepsilon)\big(\mu\,\eta_j(\PPhi_{jk})^2 +
\enorm{\PPhi^\exact-\PPhi_{jk}}^2
\big)
\reff{pr_contr:qctr}\le \qctr \, \Delta_{jk}
\quad \text{if $k+1 < \k(j)$}.
\end{align*}
This concludes the proof of~\eqref{eq1:lem:contr}.

{\bf Step~4.}\quad
We use the definition $\PPhi_{(j+1)0}:=\PPhi_{j\k}$ from Step (vi) of Algorithm \ref{algorithm} to see that
\begin{align}\label{pr_contr:aux12} \begin{split}
\Delta_{(j+1)0}
&= \mu\,\eta_{j+1}(\PPhi_{(j+1)0})^2
+ \enorm{\PPhi^\exact-\PPhi_{(j+1)0}}^2
=\mu\,\eta_{j+1}(\PPhi_{j\k})^2
+ \enorm{\PPhi^\exact-\PPhi_{j\k}}^2.
\end{split}
\end{align}
For the first summand of \eqref{pr_contr:aux12}, we use stability~\eqref{axiom:stability} and reduction~\eqref{axiom:reduction}. Together with the D\"orfler marking strategy in Step~(v) of Algorithm~\ref{algorithm} and $\MM_j \subseteq \TT_j \backslash \TT_{j+1}$, we see that
\begin{align}\label{eq:dp:estimator_reduction}
\begin{split}
 \eta_{j+1}(\PPhi_{j\k})^2
 &= \eta_{j+1}(\TT_{j+1} \backslash \TT_j, \PPhi_{j\k})^2
 + \eta_{j+1}(\TT_{j+1} \cap \TT_j, \PPhi_{j\k})^2
 \\
 &\le \qred \, \eta_{j}(\TT_j \backslash \TT_{j+1}, \PPhi_{j\k})^2
 + \eta_{j}(\TT_{j+1} \cap \TT_j, \PPhi_{j\k})^2
 \\
 &= \eta_{j}(\PPhi_{j\k})^2
 - (1-\qred) \, \eta_{j}(\TT_j \backslash \TT_{j+1}, \PPhi_{j\k})^2
 \\
 &\le \eta_{j}(\PPhi_{j\k})^2
 - (1-\qred)\theta^2 \, \eta_{j}(\PPhi_{j\k})^2
 \reff{eq:def:qest}= \qest \, \eta_{j}(\PPhi_{j\k})^2.
\end{split}
\end{align}
With this and stability \eqref{axiom:stability}, the Young inequality and 
Lemma~\ref{lemma2:pcg}(ii) yield that
\begin{align}\label{pr_contr:aux13}
\notag
\eta_{j+1}(\PPhi_{j\k})^2
&\reff{eq:dp:estimator_reduction}{\leq}\qest\,\eta_j(\PPhi_{j\k})^2
\stackrel{\eqref{axiom:stability}}{\leq}(1+\gamma)\,\qest\,\eta_j(\PPhi_{j(\k-1)})^2+(1+\gamma^{-1})\,\qest\,\Cstb^2\enorm{\PPhi_{j\k}-\PPhi_{j(\k-1)}}^2\\
&\hspace*{-7mm}
\stackrel{\rm(ii)}{\leq} (1+\gamma)\,\qest\,\eta_j(\PPhi_{j(\k-1)})^2+(1+\gamma^{-1})\,\qest\,\Cstb^2\,(1+\qpcg)^2\enorm{\PPhi_j^\exact-\PPhi_{j(\k-1)}}^2.
\end{align}
For the second summand of~\eqref{pr_contr:aux12}, we apply the Pythagoras identity~\eqref{eq:dp:aux} 
together with Lemma~\ref{lemma2:pcg}\rm(i)
and obtain that\hspace*{-2mm}
\begin{align}\label{pr_contr:aux14}
 \enorm{\PPhi^\exact-\PPhi_{j\k}}^2
 \reff{eq:dp:aux} = \enorm{\PPhi^\exact-\PPhi_j^\exact}^2
 + \enorm{\PPhi_j^\exact-\PPhi_{j\k}}^2
 \stackrel{\ref{lemma2:pcg}\rm(i)}\le \enorm{\PPhi^\exact-\PPhi_j^\exact}^2
 + \qpcg^2 \, \enorm{\PPhi_j^\exact-\PPhi_{j(\k-1)}}^2.
\end{align}
Combining \eqref{pr_contr:aux12}--\eqref{pr_contr:aux14}, we end up with
\begin{align*}
\Delta_{(j+1)0}&\leq\mu\,(1+\gamma)\,\qest\,\eta_j(\PPhi_{j(\k-1)})^2\\
&\quad+\big(\mu\,(1+\gamma^{-1})\,\qest\,\Cstb^2\,(1+\qpcg)^2+\qpcg^2\big)\enorm{\PPhi_j^\exact-\PPhi_{j(\k-1)}}^2
+\enorm{\PPhi^\exact-\PPhi_j^\exact}^2.
\end{align*}
Using the same arguments as in Step 2, we get that
\begin{align*}
\Delta_{(j+1)0}&\leq\mu\,(1+\gamma)\,\qest\,\eta_j(\PPhi_{j(\k-1)})^2\\
&\quad+\big(\mu\,(1+\gamma^{-1})\,\qest\,\Cstb^2\,(1+\qpcg)^2+\qpcg^2\big)\enorm{\PPhi_j^\exact-\PPhi_{j(\k-1)}}^2\\
&\quad+(1-\kappa)\enorm{\PPhi^\exact-\PPhi_j^\exact}^2+2\,\kappa\,\Crel^2\,\eta_j(\PPhi_{j(\k-1)})^2+2\,\kappa\,\Crel^2\,\Cstb^2\,\enorm{\PPhi_j^\exact-\PPhi_{j(\k-1)}}^2\\
&=\big(\mu\,(1+\gamma)\,\qest+2\,\kappa\,\Crel^2\big)\,\eta_j(\PPhi_{j(\k-1)})^2+(1-\kappa)\enorm{\PPhi^\exact-\PPhi_j^\exact}^2\\
&\quad+\big(\mu\,(1+\gamma^{-1})\,\qest\,\Cstb^2\,(1+\qpcg)^2+\qpcg^2+2\,\kappa\,\Crel^2\Cstb^2\big)\enorm{\PPhi_j^\exact-\PPhi_{j(\k-1)}}^2
\\
&\stackrel{\eqref{pr_contr:qctr}}{\leq}\qctr\,\mu\,\eta_j(\PPhi_{j(\k-1)})^2+\qctr\,\enorm{\PPhi^\exact-\PPhi_j^\exact}^2+\qctr\,\enorm{\PPhi_j^\exact-\PPhi_{j(\k-1)}}^2
\reff{eq:dp:aux}= \qctr\,\Delta_{j(\k-1)}.
\end{align*}
This concludes the proof of~\eqref{eq2:lem:contr}.

{\bf Step~5.}\quad
Inequality \eqref{eq:contr3} follows by induction.
This concludes the proof.
\end{proof}

\begin{proof}[{\bfseries Proof of Theorem~\ref{theorem:algorithm}{\bf(b)}.}]
The proof is split into three steps.

{\bf Step~1.}\quad 
Let $j \in \N$. Recall the Pythagoras identity~\eqref{eq:dp:aux}.
We use stability \eqref{axiom:stability} and Step (iv) of Algorithm~\ref{algorithm} to see that
\begin{align*}
\Delta_{j(\k-1)}&\reff{eq:dp:aux}=\mu\,\eta_j(\PPhi_{j(\k-1)})^2+\enorm{\PPhi_j^\exact-\PPhi_{j(\k-1)}}^2+\enorm{\phi^\exact-\PPhi_j^\exact}^2\\
&\reff{axiom:stability}\lesssim\eta_j(\PPhi_{j\k})^2+\enorm{\PPhi_{j\k}-\PPhi_{j(\k-1)}}^2+\enorm{\PPhi_j^\exact-\PPhi_{j\k}}^2+\enorm{\phi^\exact-\PPhi_j^\exact}^2\\
&\!\stackrel{\ref{algorithm}\rm(iv)}{\lesssim}\!
\eta_j(\PPhi_{j\k})^2+\enorm{\PPhi_j^\exact-\PPhi_{j\k}}^2+\enorm{\phi^\exact-\PPhi_j^\exact}^2
\reff{eq:dp:aux}\simeq\Delta_{j\k}.
\end{align*}
With the Pythagoras identity~\eqref{eq:gal_orth}, we may argue similarly to obtain that
\begin{align*}
\Delta_{j\k}
&\reff{eq:dp:aux}=\mu\,\eta_j(\PPhi_{j\k})^2+\enorm{\PPhi_j^\exact-\PPhi_{j\k}}^2+\enorm{\phi^\exact-\PPhi_j^\exact}^2\\
&\reff{axiom:stability}\lesssim\eta_j(\PPhi_{j(\k-1)})^2+\enorm{\PPhi_{j\k}-\PPhi_{j(\k-1)}}^2+\enorm{\PPhi_j^\exact-\PPhi_{j\k}}^2+\enorm{\phi^\exact-\PPhi_j^\exact}^2\\
&\reff{eq:gal_orth}= \eta_j(\PPhi_{j(\k-1)})^2+\enorm{\PPhi_j^\exact-\PPhi_{j(\k-1)}}^2+\enorm{\phi^\exact-\PPhi_j^\exact}^2
\reff{eq:dp:aux}\simeq\Delta_{j(\k-1)}.
\end{align*}
Hence, it follows that $\Delta_{j\k}\simeq\Delta_{j(\k-1)}$.

{\bf Step~2.}\quad For $0\leq j\leq j^\prime$, define $\widehat k(j) := \widehat k \in\N_0$ by
\begin{align*}
\widehat k:=\begin{cases}
\k(j)\quad&\textrm{if } j<j^\prime,\\
k^\prime\quad&\textrm{if }j=j^\prime.
\end{cases}
\end{align*}
From Step 1, Lemma \ref{lem:contr}, and the geometric series (for the sum over $k$), it follows that
\begin{align*}
 \sum_{j=0}^{j^\prime} \sum_{k=0}^{\widehat{k}(j)} \Delta_{jk}^{-1}
 \lesssim \Delta_{j'k'}^{-1} +  \sum_{j=0}^{j^\prime}\sum_{k=0}^{\widehat{k}(j)-1}\Delta_{jk}^{-1}
 &\stackrel{\eqref{eq:contr3}}{\leq} \Delta_{j'k'}^{-1} +  \sum_{j=0}^{j^\prime}
 \sum_{k=0}^{\widehat{k}(j)-1}\qctr^{|(j,\widehat k-1)| - |(j,k)|}\,\Delta_{j(\widehat k-1)}^{-1}
 \\
 &\lesssim \Delta_{j'k'}^{-1} +  \sum_{j=0}^{j^\prime}\Delta_{j(\widehat{k}-1)}^{-1}.
\end{align*}
For $k^\prime<\k(j^\prime)$, inequality \eqref{eq:contr3} and the geometric series (for the sum over $j$)  yield that
\begin{align*}
 \sum_{j=0}^{j^\prime}\Delta_{j(\widehat{k}-1)}^{-1}
 \reff{eq:contr3}\lesssim \sum_{j=0}^{j^\prime}\qctr^{|(j^\prime,k^\prime)|-|(j,\widehat{k}-1)|}\,\Delta_{j^\prime k^\prime}^{-1}
 \lesssim\Delta_{j^\prime k^\prime}^{-1}.
\end{align*}
For $k^\prime=\k(j^\prime)$, inequality \eqref{eq:contr3}, the geometric series, and Step 1 yield that
\begin{align*}
\sum_{j=0}^{j^\prime}\Delta_{j(\widehat{k}-1)}^{-1}
= \Delta_{j^\prime (\k-1)}^{-1} + \sum_{j=0}^{j^\prime-1}\Delta_{j(\k-1)}^{-1}
&\reff{eq:contr3}\lesssim \Big(1 + \sum_{j=0}^{j^\prime-1}\qctr^{|(j^\prime,\k-1)|-|(j,\k-1)|} \Big) \, \Delta_{j^\prime (\k-1)}^{-1}
\\&
\lesssim\Delta_{j^\prime(\k-1)}^{-1}
\simeq\Delta_{j^\prime \k}^{-1}
= \Delta_{j^\prime k'}^{-1}.
\end{align*}
Overall, it follows that
\begin{align}\label{eq:dp:Delta}
 \sum_{j=0}^{j^\prime}\sum_{k=0}^{\widehat{k}(j)}\Delta_{jk}^{-1}
 &\lesssim\Delta_{j^\prime k^\prime}^{-1}
 \quad\textrm{for all $(j^\prime,k^\prime)\in\QQ$.}
\end{align}

{\bf Step~3.}\quad
According to the proof of \cite[Lemma 4.9]{axioms}, estimate~\eqref{eq:dp:Delta} guarantees (and is even equivalent to) the existence of $0<\qlin<1$ such that
\begin{align}\label{eq2:dp:Delta}
\Delta_{j^\prime k^\prime}^{1/2} \lesssim \qlin^{|(j^\prime,k^\prime)|-|(j,k)|} \, \Delta_{jk}^{1/2}\quad\textrm{for all }(j,k),(j^\prime,k^\prime)\in\QQ\textrm{ with }(j',k') \ge (j,k).
\end{align}
Clearly, it holds that $\quasierror_{jk} \simeq \Delta_{jk}^{1/2}$ for all $(j,k) \in \QQ$.
This and~\eqref{eq2:dp:Delta} conclude the proof.
\end{proof}

\subsection{Proof of Theorem~\ref{theorem:algorithm}(c)}
\label{section:proof:thm:c}%
The proof of optimal convergence rates requires the following additional properties of the mesh-refinement strategy. For 3D BEM (and 2D NVB from Section~\ref{section:refined3D}) these properties are verified in~\cite{bdd,stevenson07,stevenson}, and any assumption on $\TT_0$ is removed in~\cite{kpp}. For 2D BEM (and the extended 1D bisection from Section~\ref{section:refined2D}), these properties are verified in~\cite{cmam}.
\begin{enumerate}
\renewcommand{\theenumi}{R\arabic{enumi}}
\bf
\item\label{axiom:split}
\rm
\textbf{splitting property:} Each refined element is split in at least 2 and at most in 
$C_{\rm son}\ge2$ many sons, i.e., for all $\TT_\coarse \in \mathbb{T}$ and 
all $\mathcal{M}_\coarse \subseteq \TT_\coarse$, the refined mesh
$\TT_\circ = \operatorname{refine}(\TT_\coarse, \mathcal{M}_\coarse)$ satisfies that
\begin{align*}
	\# (\TT_\coarse \setminus \TT_\circ) + \# \TT_\coarse \leq \# \TT_\circ
	\leq C_{\rm son} \, \# (\TT_\coarse \setminus \TT_\circ) + \# (\TT_\coarse \cap \TT_\circ).
\end{align*}

\bf
\item\label{axiom:overlay} 
\rm
\textbf{overlay estimate:} For all meshes $\TT \in \mathbb{T}$ and $\TT_\coarse,\TT_\circ \in \operatorname{refine}(\TT)$
there exists a common refinement $\TT_\coarse \oplus \TT_\circ \in \operatorname{refine}(\TT_\coarse) \cap \operatorname{refine}(\TT_\circ) \subseteq \operatorname{refine}(\TT)$
with
\begin{align*}
	\# (\TT_\coarse \oplus \TT_\circ) \leq \# \TT_\coarse + \# \TT_\circ - \# \TT.
\end{align*}

\bf
\item\label{axiom:mesh_closure} 
\rm
\textbf{mesh-closure estimate:}
 There exists $C_{\rm mesh}>0$ such that the sequence $\TT_j$ with corresponding 
 $\mathcal{M}_j \subseteq \TT_j$, which is generated by Algorithm~\ref{algorithm}, satisfies that	
\begin{align*}
	\# \TT_j  - \# \TT_0 \leq C_{\rm mesh} \sum_{\ell=0}^{j -1} \# \mathcal{M}_\ell.
\end{align*} 
\end{enumerate}

Recall the constants $\Cstab > 0$ from~\eqref{axiom:stability} and $\Cdrel>0$ from~\eqref{axiom:discrete_reliability}.
Suppose that $0<\theta\leq1$ and $\lambda > 0 $ are sufficiently small such that
\begin{align}\label{eq:ass_theta}
0 < \theta'':=\frac{\theta+\lambda/\lambda_{\rm opt}}{1-\lambda/\lambda_{\rm opt}}<\theta_{\rm opt}:=\big(1+\Cstb^2\,\Cdrel^2\big)^{-1/2},
\text{~where~~}
\lambda_{\rm opt} := \Big(\Cstab \,\frac{\qpcg}{ 1-\qpcg}\Big)^{-1}.\hspace*{-3mm}
\end{align}
In particular, it holds that $0 < \theta < \theta_{\rm opt}$ and $0 < \lambda < \lambda_{\rm opt}$.
We need the following comparison lemma which is found in \cite[Lemma~4.14]{axioms}.

\begin{lemma}\label{lemma:doerfler2}
Suppose \eqref{axiom:overlay}, \eqref{axiom:stability}, \eqref{axiom:reduction}, and \eqref{axiom:discrete_reliability}. Recall the assumption~\eqref{eq:ass_theta}.
There exist constants $C_1, C_2 > 0$ such that for all $s>0$ with $\norm{\PPhi^\exact}{\mathbb{A}_s} < \infty$ and all $j \in \N_0$, there exists $\mathcal{R}_j \subseteq \mathcal{T}_j $ which satisfies
\begin{align}\label{eq:doerfler2_Rbound}
\# \mathcal{R}_j \leq C_1 C_2^{-1/s} \, \norm{\PPhi^\exact}{\mathbb{A}_s}^{1/s} \eta_j(\PPhi_j^\exact)^{-1/s},
\end{align}
as well as the D\"orfler marking criterion
\begin{align}\label{eq:doerfler2_doerfler}
\theta'' \eta_j(\PPhi_j^\exact) \leq \eta_j(\mathcal{R}_j, \PPhi_j^\exact).
\end{align}
The constants $C_1, C_2$ depend only on the constants of~\eqref{axiom:stability},~\eqref{axiom:reduction}, and~\eqref{axiom:discrete_reliability}. \qed
\end{lemma}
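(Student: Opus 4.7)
The plan is to follow the comparison principle that underpins optimal rates of adaptive methods in the spirit of~\cite{stevenson07,ckns,axioms}. Given the current mesh $\TT_j$, I need to manufacture a set $\mathcal{R}_j \subseteq \TT_j$ that already satisfies a D\"orfler bulk criterion with the inflated parameter $\theta''$, and whose cardinality is controlled by $\norm{\PPhi^\exact}{\AA_s}^{1/s}\,\eta_j(\PPhi_j^\exact)^{-1/s}$.

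I will proceed in three steps. First, set $\varepsilon := \kappa\,\eta_j(\PPhi_j^\exact)$ for a parameter $\kappa > 0$ to be fixed at the end, and use $\norm{\PPhi^\exact}{\AA_s} < \infty$ to select a mesh $\TT_\varepsilon \in \refine(\TT_0)$ with $\eta_\varepsilon(\PPhi_\varepsilon^\exact) \le \varepsilon$ and $\#\TT_\varepsilon - \#\TT_0 \lesssim \kappa^{-1/s}\,\norm{\PPhi^\exact}{\AA_s}^{1/s}\,\eta_j(\PPhi_j^\exact)^{-1/s}$. Second, form the overlay $\widehat{\TT} := \TT_j \oplus \TT_\varepsilon$: the overlay estimate~\eqref{axiom:overlay} gives $\#(\widehat\TT\setminus\TT_j) \le \#\TT_\varepsilon - \#\TT_0$, and a standard quasi-monotonicity result (derived from \eqref{axiom:stability}--\eqref{axiom:reliability} and \eqref{axiom:overlay}, cf.~\cite[Lemma~3.5]{axioms}) yields $\eta_{\widehat\TT}(\PPhi_{\widehat\TT}^\exact) \lesssim \eta_\varepsilon(\PPhi_\varepsilon^\exact) \le \varepsilon$. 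Third, apply discrete reliability~\eqref{axiom:discrete_reliability} to the pair $(\TT_j,\widehat\TT)$ to obtain $\mathcal{R}_j \supseteq \TT_j\setminus\widehat\TT$ with $\#\mathcal{R}_j \le \Cdrl\,\#(\TT_j\setminus\widehat\TT)$ and $\enorm{\PPhi_{\widehat\TT}^\exact - \PPhi_j^\exact} \le \Cdrl\,\eta_j(\mathcal{R}_j,\PPhi_j^\exact)$.

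The cardinality bound~\eqref{eq:doerfler2_Rbound} is then immediate from Steps~1 and~3. To derive the D\"orfler inequality~\eqref{eq:doerfler2_doerfler}, I observe that $\TT_j\setminus\mathcal{R}_j \subseteq \TT_j\cap\widehat\TT$ consists of non-refined elements, so stability~\eqref{axiom:stability} applies there. Combined with a Young inequality (small parameter $\delta>0$) and the bound on $\enorm{\PPhi_{\widehat\TT}^\exact - \PPhi_j^\exact}$ from~\eqref{axiom:discrete_reliability}, plus the quasi-monotonic estimate $\eta_{\widehat\TT}(\PPhi_{\widehat\TT}^\exact)^2 \lesssim \varepsilon^2 = \kappa^2\,\eta_j(\PPhi_j^\exact)^2$, the splitting $\eta_j(\PPhi_j^\exact)^2 = \eta_j(\TT_j\setminus\mathcal{R}_j,\PPhi_j^\exact)^2 + \eta_j(\mathcal{R}_j,\PPhi_j^\exact)^2$ leads to
\begin{align*}
 \big(1 - c(\delta)\,\kappa^2\big)\,\eta_j(\PPhi_j^\exact)^2 \le \big(1 + (1+\delta^{-1})\,\Cstb^2\,\Cdrl^2\big)\,\eta_j(\mathcal{R}_j,\PPhi_j^\exact)^2,
\end{align*}
where $c(\delta) > 0$ depends on $\delta$ and the quasi-monotonicity constant.

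The main obstacle, and the reason for the precise shape of assumption~\eqref{eq:ass_theta}, is the calibration of the two free parameters. I must first pick $\delta>0$ so that $\big(1 + (1+\delta^{-1})\,\Cstb^2\,\Cdrl^2\big)^{-1} > \theta''^2$; this is feasible precisely because $\theta'' < \theta_{\rm opt} = (1+\Cstb^2\Cdrl^2)^{-1/2}$. Only then can I choose $\kappa>0$ small enough that $1 - c(\delta)\,\kappa^2$ still leaves a margin large enough to conclude $\eta_j(\mathcal{R}_j,\PPhi_j^\exact) \ge \theta''\,\eta_j(\PPhi_j^\exact)$. With $\delta$ and $\kappa$ thus fixed, the constants $C_1, C_2$ in the statement are determined. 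Everything else reduces to routine application of \eqref{axiom:stability}, \eqref{axiom:reduction}, \eqref{axiom:discrete_reliability}, and \eqref{axiom:overlay}.
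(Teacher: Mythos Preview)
Your argument is correct and is precisely the standard comparison argument of~\cite[Lemma~4.14]{axioms}, which is exactly what the paper invokes (the lemma is stated with a direct citation and no further proof). One small slip: in Step~2 you write $\#(\widehat\TT\setminus\TT_j)\le\#\TT_\varepsilon-\#\TT_0$, but what you actually need (and later use) is the bound on $\#(\TT_j\setminus\widehat\TT)$; this follows from the splitting property~\eqref{axiom:split} via $\#(\TT_j\setminus\widehat\TT)\le\#\widehat\TT-\#\TT_j$ together with the overlay estimate, so the argument goes through unchanged.
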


Another lemma, which we need for the proof of Theorem~\ref{theorem:algorithm}(c), shows that the iterates $\PPhi_{\coarse k} $ of Algorithm~\ref{algorithm} are close to the exact Galerkin approximation $\PPhi_\coarse^\exact \in \PP^0(\TT_\coarse)$.

\pagebreak
\begin{lemma}\label{lemma:est_equivalence}
Let $0<\lambda<\lambda_{\rm opt}$. For all $j \in \N_0$, it holds that
\begin{align}\label{eq1:lemma:equivalence}
 \enorm{\PPhi^\exact_j-\PPhi_{j\k}} \le \lambda\,\frac{\qpcg}{ 1-\qpcg}\,\min\Big\{\eta_j(\PPhi_{j\k})\,,\,\frac{1}{1-\lambda/\lambda_{\rm opt}}\,\eta_j(\PPhi^\exact_j)\Big\}.
\end{align}
Moreover, there holds equivalence
\begin{align}\label{eq2:lemma:equivalence}
 (1-\lambda/\lambda_{\rm opt})\,\eta_j(\PPhi_{j\k}) \le \eta_j(\PPhi^\exact_j) \le (1+\lambda/\lambda_{\rm opt})\,\eta_j(\PPhi_{j\k}).
\end{align}
\end{lemma}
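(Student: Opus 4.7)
The strategy is to chain together three ingredients: the PCG contraction bound from Lemma~\ref{lemma2:pcg}(iv), the PCG termination criterion imposed in Step~(iv) of Algorithm~\ref{algorithm}, and the stability axiom~\eqref{axiom:stability} of the residual estimator. All three ingredients are already available, so the argument is essentially a direct computation; the only nontrivial step is to recognize that the constant $\lambda_{\rm opt}$ is precisely tuned so that stability turns the error bound into the absorbable term $(\lambda/\lambda_{\rm opt})\,\eta_j(\PPhi_{j\k})$.

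\textbf{Step~1: the estimator-based error bound.} Since $k=\k(j)$ is by definition the first index at which the stopping criterion in Step~(iv) of Algorithm~\ref{algorithm} triggers, we have
$\enorm{\PPhi_{j\k}-\PPhi_{j(\k-1)}}\le\lambda\,\eta_j(\PPhi_{j\k})$.
Applying Lemma~\ref{lemma2:pcg}(iv) at $k=\k$ gives
$\enorm{\PPhi_j^\exact-\PPhi_{j\k}}\le \qpcg(1-\qpcg)^{-1}\,\enorm{\PPhi_{j\k}-\PPhi_{j(\k-1)}}$,
and combining these two bounds yields
$\enorm{\PPhi_j^\exact-\PPhi_{j\k}}\le \lambda\,\qpcg(1-\qpcg)^{-1}\,\eta_j(\PPhi_{j\k})$,
which is the first entry of the minimum in~\eqref{eq1:lemma:equivalence}.

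\textbf{Step~2: the equivalence of estimators~\eqref{eq2:lemma:equivalence}.} Apply stability~\eqref{axiom:stability} on the whole mesh with $\mathcal{U}_\bullet=\TT_j$:
$|\eta_j(\PPhi_j^\exact)-\eta_j(\PPhi_{j\k})|\le \Cstab\,\enorm{\PPhi_j^\exact-\PPhi_{j\k}}$.
Plug in the Step~1 bound and use $\Cstab\,\qpcg/(1-\qpcg)=1/\lambda_{\rm opt}$ (which is exactly the definition of $\lambda_{\rm opt}$) to obtain
$|\eta_j(\PPhi_j^\exact)-\eta_j(\PPhi_{j\k})|\le(\lambda/\lambda_{\rm opt})\,\eta_j(\PPhi_{j\k})$.
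Reading this as two one-sided inequalities yields the upper bound
$\eta_j(\PPhi_j^\exact)\le(1+\lambda/\lambda_{\rm opt})\,\eta_j(\PPhi_{j\k})$,
and rearranging the lower bound gives
$(1-\lambda/\lambda_{\rm opt})\,\eta_j(\PPhi_{j\k})\le\eta_j(\PPhi_j^\exact)$;
note that $1-\lambda/\lambda_{\rm opt}>0$ by the hypothesis $\lambda<\lambda_{\rm opt}$, so the lower equivalence is meaningful.

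\textbf{Step~3: the second entry of the minimum.} Starting from the bound of Step~1 and inserting the lower equivalence from Step~2,
$\enorm{\PPhi_j^\exact-\PPhi_{j\k}}\le\lambda\,\qpcg(1-\qpcg)^{-1}\,\eta_j(\PPhi_{j\k})\le \lambda\,\qpcg(1-\qpcg)^{-1}\,(1-\lambda/\lambda_{\rm opt})^{-1}\,\eta_j(\PPhi_j^\exact)$,
which is the second entry of the minimum. Together with Step~1 this proves~\eqref{eq1:lemma:equivalence}, and with Step~2 the lemma is complete. The only ``obstacle'' is purely bookkeeping: one must verify that the algebraic identity $\Cstab\,\qpcg/(1-\qpcg)=\lambda_{\rm opt}^{-1}$ makes the stability term absorbable, which is exactly what the definition of $\lambda_{\rm opt}$ in~\eqref{eq:ass_theta} is designed to achieve.
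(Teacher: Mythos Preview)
Your proof is correct and uses the same three ingredients as the paper (Lemma~\ref{lemma2:pcg}(iv), the stopping criterion in Step~(iv), and stability~\eqref{axiom:stability}); the only cosmetic difference is that the paper first derives the second entry of the minimum in~\eqref{eq1:lemma:equivalence} directly via an absorption argument and then deduces~\eqref{eq2:lemma:equivalence}, whereas you prove~\eqref{eq2:lemma:equivalence} first and read off the second entry from its lower bound. The two orderings are logically equivalent.
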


\begin{proof}
Stability
\eqref{axiom:stability} yields that $|\eta_j(\PPhi^\exact_j)-\eta_j(\PPhi_{j\k})|\le \Cstab\,\enorm{\PPhi^\exact_j - \PPhi_{j\k}} $.
 Therefore, Lemma~\ref{lemma2:pcg}(iv) and the assumption on the PCG iterate in Step~(iv) of Algorithm~\ref{algorithm} imply that
\begin{align*}
 \enorm{\PPhi^\exact_j -\PPhi_{j\k} }
 \stackrel{\ref{lemma2:pcg}\text{(iv)}}{\le}  \frac{\qpcg}{ 1-\qpcg}  \,\enorm{\PPhi_{j\k}-\PPhi_{j (\k-1)}}
  &\le\lambda \,\frac{\qpcg}{ 1-\qpcg} \,\eta_j(\PPhi_{j\k}) \\
 &\reff{axiom:stability}\le \lambda\, \frac{\qpcg}{ 1-\qpcg} \,\big(\eta_j(\PPhi^\exact_j) + \Cstab\,\enorm{\PPhi^\exact_j -\PPhi_{j\k} } \big).
\end{align*}
Since $0<\lambda<\lambda_{\rm opt}$ and hence $\lambda\Cstab\,\frac{\qpcg}{ 1-\qpcg} = \lambda/\lambda_{\rm opt}<1$, this yields that
\begin{align*}
 \enorm{\PPhi^\exact_j -\PPhi_{j\k} } \le\frac{ \lambda\,\frac{\qpcg}{ 1-\qpcg} }{ 1 - \lambda\Cstab\,\frac{\qpcg}{ 1-\qpcg} }\,\eta_j(\PPhi^\exact_j)
 = \lambda\,\frac{\qpcg}{ 1-\qpcg} \,\frac{1}{1-\lambda/\lambda_{\rm opt}}\,\eta_j(\PPhi^\exact_j).
\end{align*}
Altogether, this proves~\eqref{eq1:lemma:equivalence}. Moreover, with stability~\eqref{axiom:stability},
we see that
\begin{align*}
\eta_j(\PPhi^\exact_j)
 \reff{axiom:stability} \le \eta_j(\PPhi_{j\k}) + \Cstab\,\enorm{\PPhi^\exact_j -\PPhi_{j\k} }
 \reff{eq1:lemma:equivalence}
 \le (1+\lambda/\lambda_{\rm opt})\,\eta_j(\PPhi_{j\k})
\end{align*}
as well as 
\begin{align*}
 \eta_j(\PPhi_{j\k})
 \reff{axiom:stability}\le \eta_j(\PPhi^\exact_j) + \Cstab\,\enorm{\PPhi^\exact_j -\PPhi_{j\k} }
 \reff{eq1:lemma:equivalence}
 \le \Big(1+ \,\frac{\lambda/\lambda_{\rm opt}}{1-\lambda/\lambda_{\rm opt}}\Big)\,\eta_j(\PPhi^\exact_j)
 = \frac{1}{1-\lambda/\lambda_{\rm opt}}\,\eta_j(\PPhi^\exact_j).
\end{align*}
This concludes the proof.
\end{proof}

Finally, we need the following lemma which immediately shows
``$\Longleftarrow$'' in~\eqref{eq:theorem:opt_rate}.

\begin{lemma}\label{lemma:class}
Suppose~\eqref{axiom:split}.
For $j \in \N_0$, let $\opt\TT_{j+1} = \refine(\opt\TT_j,\opt\MM_j)$ with arbitrary, but non-empty $\opt\MM_j \subseteq \opt\TT_j$ and $\opt\TT_0 = \TT_0$. Let $\opt\QQ \subseteq \N_0 \times \N_0$ be an index set and $\opt\PPhi_{jk} \in \PP^0(\opt\TT_j)$ for all $(j,k) \in \opt\QQ$.
Let $s > 0$ and suppose that the corresponding quasi-errors $\opt\quasierror_{jk}^2 := \enorm{\phi^\exact - \opt\PPhi_{jk}}^2 + \opt\eta_j(\opt\phi_j^\star)^2$ satisfy that
\begin{align}\label{eq:lemma:class}
 \sup_{(j,k) \in \opt\QQ} \big(\#\opt\TT_j - \#\TT_0 +1\big)^s \, \opt\quasierror_{jk} < \infty.
\end{align}
Then, it follows that $\norm{\phi^\exact}{\mathbb{A}_s} < \infty$.
\end{lemma}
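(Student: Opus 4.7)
The plan is to prove $\norm{\phi^\exact}{\AA_s} < \infty$ by exhibiting, for every $N \in \N_0$, an admissible refinement $\TT_\coarse \in \refine(\TT_0)$ with $\#\TT_\coarse - \#\TT_0 \le N$ and $\eta_\coarse(\PPhi_\coarse^\exact) \lesssim (N+1)^{-s}$, the implicit constant being independent of $N$. I would take $\TT_\coarse := \opt\TT_{j^\star(N)}$, where
\begin{align*}
 j^\star(N) := \max \big\{ j \in \N_0 : \#\opt\TT_j - \#\TT_0 \le N \big\}.
\end{align*}
Since each $\opt\MM_j$ is non-empty and $\opt\MM_j \subseteq \opt\TT_j \setminus \opt\TT_{j+1}$, the lower bound in~\eqref{axiom:split} yields $\#\opt\TT_{j+1} \ge \#\opt\TT_j + \#\opt\MM_j \ge \#\opt\TT_j + 1$. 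Hence $\#\opt\TT_j \to \infty$, and $j^\star(N)$ is well-defined and finite for every $N$.

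Next, I would invoke the hypothesis~\eqref{eq:lemma:class} to control the estimator on $\opt\TT_{j^\star}$. Choosing any $k^\star$ with $(j^\star, k^\star) \in \opt\QQ$ — such a $k^\star$ exists in the intended application, since otherwise~\eqref{eq:lemma:class} carries no information about $j^\star$ — the definition of the quasi-error yields $\opt\eta_{j^\star}(\opt\phi_{j^\star}^\star) \le \opt\quasierror_{j^\star k^\star}$. Together with~\eqref{eq:lemma:class}, this implies
\begin{align*}
 \eta_\coarse(\PPhi_\coarse^\exact) = \opt\eta_{j^\star}(\opt\phi_{j^\star}^\star) \lesssim (\#\opt\TT_{j^\star} - \#\TT_0 + 1)^{-s}.
\end{align*}

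The remaining ingredient — the only place where the upper bound in~\eqref{axiom:split} is used — is to trade $(\#\opt\TT_{j^\star} - \#\TT_0 + 1)^{-s}$ for $(N+1)^{-s}$ up to a constant. By maximality of $j^\star$, we have $N + 1 \le \#\opt\TT_{j^\star+1} - \#\TT_0 + 1$, while the upper bound in~\eqref{axiom:split} yields $\#\opt\TT_{j+1} \le C_{\rm son} \, \#\opt\TT_j$. A short elementary computation then produces a constant $C^\prime > 0$, depending only on $C_{\rm son}$ and $\#\TT_0$, such that
\begin{align*}
 \#\opt\TT_{j^\star+1} - \#\TT_0 + 1 \le C^\prime \, (\#\opt\TT_{j^\star} - \#\TT_0 + 1).
\end{align*}
Combining these bounds gives $(N+1)^s \, \eta_\coarse(\PPhi_\coarse^\exact) \lesssim 1$ uniformly in $N$; passing to the supremum over $N$ yields $\norm{\phi^\exact}{\AA_s} < \infty$.

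The main obstacle is purely bookkeeping: the $+1$ shifts in the definition of $\AA_s$ and in the comparison of cardinalities have to be handled carefully so that $C^\prime$ genuinely depends only on $C_{\rm son}$ and $\#\TT_0$, not on the particular sequence $\opt\TT_j$ (and in particular remains valid for small $N$, where $\#\opt\TT_{j^\star}$ is close to $\#\TT_0$). Everything else is the standard comparison of a nested refinement sequence against the best possible mesh with at most $N$ additional elements.
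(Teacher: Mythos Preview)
Your argument is essentially the paper's proof: pick the largest $j$ with $\#\opt\TT_j-\#\TT_0\le N$, bound $\opt\eta_j(\opt\phi_j^\star)$ by the quasi-error, and use~\eqref{axiom:split} to pass from $\#\opt\TT_{j+1}$ back to $\#\opt\TT_j$ (the paper quotes \cite[Lemma~22]{bhp17} for the cardinality equivalence you call a ``short elementary computation''). The only difference is that the paper works with $\opt\quasierror_{jk}^2=\enorm{\phi^\star-\opt\PPhi_{jk}}^2+\opt\eta_j(\opt\PPhi_{jk})^2$ and therefore inserts a Pythagoras-plus-stability step $\opt\quasierror_{jk}^2\gtrsim\opt\eta_j(\opt\PPhi_j^\exact)^2$, whereas you read the bound directly from the lemma statement as written; this is a cosmetic discrepancy, not a different strategy.
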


\begin{proof}
Due to the Pythagoras identity~\eqref{eq:dp:aux} and stability~\eqref{axiom:stability}, it holds that
\begin{align}\begin{split}\label{eq:opt_aux1}
\opt\quasierror_{jk}^2&=\enorm{\phi^\star - \opt\PPhi_{jk}}^2 + \opt\eta_{j}(\opt\PPhi_{jk})^2
\reff{eq:dp:aux}=\enorm{\phi^\star - \opt\PPhi_j^\exact}^2+\enorm{\opt\PPhi_j^\exact-\opt\PPhi_{jk}}^2 + \opt\eta_{j}(\opt\PPhi_{jk})^2
\reff{axiom:stability}\gtrsim \opt\eta_j(\opt\PPhi_j^\exact)^2.
\end{split}
\end{align}
Additionally, \cite[Lemma~22]{bhp17} shows that
\begin{align}\label{eq:opt_aux2}
 \#\TT_\bullet - \#\TT_0 + 1
 \le \#\TT_\bullet \le 
 \#\TT_0 \, \big(\#\TT_\bullet - \#\TT_0 + 1\big)
 \quad \text{for all } \TT_\bullet \in \mathbb{T}.
\end{align}
Given $N\in\N_0$, there exists an index $j\in\N_0$ such that
\begin{align}\label{eq:opt_aux3} 
 \#\opt\TT_j - \#\TT_0
 \leq N 
 < N+1 
 \le \#\opt\TT_{j+1} - \#\TT_0 + 1
 \reff{eq:opt_aux2}\le \#\opt\TT_{j+1}
 \reff{axiom:split}\lesssim \#\opt\TT_j
 \reff{eq:opt_aux2}\lesssim \#\opt\TT_j- \#\TT_0 + 1.\hspace*{-1mm}
\end{align}
With~\eqref{eq:opt_aux1}--\eqref{eq:opt_aux3}, it follows that
\begin{align*}
(N+1)^s \! \min_{\substack{\TT_\coarse \in \refine(\TT_0) \\ \#\TT_\coarse - \#\TT_0 \le N}} \!\eta_\coarse(\PPhi_\coarse^\star) 
&\reff{eq:opt_aux3}\lesssim \big(\#\opt\TT_j- \#\TT_0 + 1\big)^s \, \opt\eta_j(\opt\PPhi_j^\exact)
\reff{eq:opt_aux1}\lesssim \!\! \sup_{(j,k) \in \opt\QQ} \! \big(\#\opt\TT_j- \#\TT_0 + 1\big)^s \, \opt\quasierror_{jk}
\reff{eq:lemma:class}< \infty.
\end{align*}%
Since the upper bound is finite and independent of $N$, this implies that $\norm{\phi^\exact}{\mathbb{A}_s} < \infty$.
\end{proof}

\begin{proof}[{\bfseries Proof of Theorem~\ref{theorem:algorithm}{\bf(c)}.}]
With Lemma~\ref{lemma:class}, it only remains to prove the implication ``$\Longrightarrow$'' in~\eqref{eq:theorem:opt_rate}.
The proof is split into three steps, where
we may suppose that $\norm{\phi^\star}{\mathbb{A}_s} < \infty$.

{\bf Step~1.}\quad By Assumption~\eqref{eq:ass_theta}, Lemma~\ref{lemma:doerfler2} provides a set $\mathcal{R}_j\subseteq\mathcal{T}_j$ with~\eqref{eq:doerfler2_Rbound}--\eqref{eq:doerfler2_doerfler}. 
Due to stability~\eqref{axiom:stability} 
and $\lambda_{\rm opt}^{-1} = C_{\rm stb}\,\frac{\qpcg}{1-\qpcg}$, it holds that
\begin{align*}
 \eta_j(\mathcal{R}_j,\PPhi_j^\exact) 
 \reff{axiom:stability}\le \eta_j(\mathcal{R}_j,\PPhi_{j\k}) + C_{\rm stb}\,\enorm{\PPhi_j^\star-\PPhi_{j\k}}
 \reff{eq1:lemma:equivalence}\le \eta_j(\mathcal{R}_j,\PPhi_{j\k}) + \lambda/\lambda_{\rm opt}\,\eta_j(\PPhi_{j\k}).
\end{align*}
Together with $\theta'' \eta_j(\PPhi_j^\exact) \le \eta_\ell(\mathcal{R}_\ell,\PPhi_j^\exact)$, this proves that
\begin{align*}
 (1-\lambda/\lambda_{\rm opt})\theta''\,\eta_j(\PPhi_{j\k})
 \reff{eq2:lemma:equivalence}\le \theta''\,\eta_j(\PPhi_j^\exact)
 \le\eta_j(\mathcal{R}_j,\PPhi_j^\exact)
 \le\eta_j(\mathcal{R}_j,\PPhi_{j\k}) + \lambda/\lambda_{\rm opt}\,\eta_j(\PPhi_{j\k})
\end{align*}
and results in
\begin{align}\label{eq37*}
 \theta\,\eta_j(\PPhi_{j\k}) \reff{eq:ass_theta}= \Big((1-\lambda/\lambda_{\rm opt})\theta'' - \lambda/\lambda_{\rm opt}\Big)\,\eta_j(\PPhi_{j\k}) \le \eta_j(\mathcal{R}_j,\PPhi_{j\k}).
\end{align}
Hence, $\mathcal{R}_j$ satisfies the D\"orfler marking for $\PPhi_{j\k}$ with parameter $\theta$. By choice of $\mathcal{M}_j$ in Step~(v) of Algorithm~\ref{algorithm}, we thus infer that
 \begin{align*}
 	\# \mathcal{M}_j \stackrel{\eqref{eq37*}}{\lesssim} \# \mathcal{R}_j 
 	\stackrel{\eqref{eq:doerfler2_Rbound}}{\lesssim} \eta_j(\PPhi_j^\exact)^{-1/s}
	\reff{eq2:lemma:equivalence}\simeq \eta_j(\PPhi_{j\k})^{-1/s}
	\quad\text{for all $j\in\mathbb{N}_0$}.
 \end{align*}
The mesh-closure estimate~\eqref{axiom:mesh_closure} guarantees that
\begin{align}\label{eq:theorem_optimal_rate_proof}
	\# \mathcal{T}_j - \# \mathcal{T}_0 +1 
	\reff{axiom:mesh_closure}\lesssim \sum_{\ell=0}^{j-1} \# \mathcal{M}_\ell 
	\lesssim \sum_{\ell=0}^{j-1} \eta_\ell(\PPhi_{\ell\k})^{-1/s}  \quad \text{for all } j>0.
\end{align}

{\bf Step~2.}\quad
For $j=0$ it holds that $1\lesssim\quasierror_{0\k}^{-1/s}$. For $j>0$, we proceed as follows: 
Remark~\ref{remark:quasierror} yields that $\eta_\ell(\phi_{\ell\k}) \simeq \Lambda_{\ell\k}$.
Theorem~\ref{theorem:algorithm}(b) and the geometric series prove that
\begin{align*}
 \sum_{\ell=0}^{j-1} \eta_\ell(\PPhi_{\ell\k})^{-1/s}
 \simeq \sum_{\ell=0}^{j-1} \quasierror_{\ell\k}^{-1/s}
 \reff{eq1:theorem:algorithm}\lesssim \sum_{\ell=0}^{j-1} (\qlin^{1/s})^{|(j,\k)-(\ell,\k)|} \, \quasierror_{j\k}^{-1/s}
 \lesssim \quasierror_{j\k}^{-1/s}.
\end{align*}
Combining this with \eqref{eq:theorem_optimal_rate_proof} and including the estimate for $j=0$, we derive that
\begin{align}\label{eq:opt_aux5}
 \# \mathcal{T}_j - \# \mathcal{T}_0 +1 \lesssim \quasierror_{j\k}^{-1/s}  \quad \text{for all } j \in \N_0.
\end{align}

{\bf Step~3.}\quad 
Arguing as in~\eqref{eq:opt_aux3} and employing Theorem~\ref{theorem:algorithm}(b), we see that
\begin{align*}
\#\TT_{j}-\#\TT_0+1
\reff{eq:opt_aux3}\simeq 
\#\TT_{j-1}-\#\TT_0+1
\reff{eq:opt_aux5}\lesssim \quasierror_{(j-1)\k}^{-1/s}
\reff{eq1:theorem:algorithm}\lesssim \quasierror_{jk}^{-1/s}
\text{ for all } (j,k) \in \QQ \text{ with } j > 0.
\end{align*}
Since $\k(0) \leq\#\TT_0< \infty$, we hence conclude that
$\displaystyle
 \sup_{(j,k) \in \QQ} (\#\TT_{j}-\#\TT_0+1)^s \, \quasierror_{jk} < \infty.
$
\end{proof}

\subsection{Proof of Corollary~\ref{corollary:algorithm}}
\label{section:proof:cor}%
For all $\delta > 0$, it holds that
\begin{align*}
\#\TT_\bullet - \#\TT_0 + 1 
\reff{eq:opt_aux2}\simeq \#\TT_\bullet \le (\#\TT_\bullet)\log^2(1+\#\TT_\bullet) 
\lesssim (\#\TT_\bullet)^{1+\delta} 
\quad \text{for all } \TT_\bullet \in \T,
\end{align*}
where the hidden constant depends only on $\delta$. 
From~\eqref{eq:cost:opt}, it thus follows that  
\begin{align*}
 \sup_{j \in \N_0} \big[ \#\opt\TT_j - \#\TT_0 + 1\big]^s \, \opt\eta_j(\opt\phi_j^\star) 
 \lesssim \sup_{j \in \N_0} \big[(\#\opt\TT_j) \, \log^2(1+\#\opt\TT_j)\big]^s \, \opt\eta_j(\opt\phi_j^\star) < \infty.
\end{align*}
From Lemma~\ref{lemma:class}, we derive that $\norm{\phi^\star}{\mathbb{A}_s} < \infty$.
Hence, Theorem~\ref{theorem:algorithm}(c) yields that
\begin{align}\label{eq:aux:complexity}
 \sup_{(j,k) \in \QQ} \big[\#\TT_j\big]^s \, \quasierror_{jk} 
 \simeq  \sup_{(j,k) \in \QQ} \big[ \#\TT_j - \#\TT_0 + 1 \big]^s \, \quasierror_{jk}  < \infty.
\end{align}
Let $0 < \eps < s$ and choose $\delta > 0$ such that 
\begin{align*}
 0 < s - \eps = \frac{s}{1+\delta} =: t.
\end{align*}
This leads to
\begin{align*}
 (\#\TT_j) \log^2(1+\#\TT_j) 
 \lesssim (\#\TT_j)^{1+\delta} 
 \reff{eq:aux:complexity}\lesssim \quasierror_{jk}^{-(1+\delta)/s} = \quasierror_{jk}^{-1/t}
 \quad \text{for all } (j,k) \in \QQ.
\end{align*}
From Theorem~\ref{theorem:algorithm}(b) and the geometric series, it follows that
\begin{align*}
 \sum_{(j,k) \le (j',k')} \quasierror_{jk}^{-1/t}
 \reff{eq1:theorem:algorithm}\lesssim \sum_{(j,k) \le  (j',k')} (\qlin^{1/t})^{|(j',k')|-|(j,k)|}  \quasierror_{j'k'}^{-1/t}
 \lesssim \quasierror_{j'k'}^{-1/t}
 \quad \text{for all } (j',k') \in \QQ.
\end{align*}
Combining the last two estimates, we see that
\begin{align*}
 \Big[\sum_{(j,k) \le (j',k')} (\#\TT_j)\log^2(1+\#\TT_j)\big)\Big]^{s-\eps}
 \lesssim \quasierror_{j'k'}^{-(s-\eps)/t}
 = \quasierror_{j'k'}^{-1}
 \text{ for all } (j',k') \in \QQ. 
\end{align*}
This concludes the proof.
\qed

\def\subspace{{\ast}}%
\def\SS{\mathcal{S}}%
\def\SSS{\widetilde{\mathcal{S}}}%
\section{Hyper-singular integral equation}
\label{section:hypsing}%

We only sketch the setting and refer to~\cite{mclean} for further details and proofs. 
Given $f: \Gamma \to \R$, the hyper-singular integral equation seeks $u^\exact:\Gamma\to\R$ such that
\begin{align}\label{eq:hypsing}
 (W u^\star)(x) := -\partial_{\normal(x)} \int_\Gamma \partial_{\normal(y)} G(x-y) u^\star(y) \d{y}
 = f(x) \quad \text{for all } x \in \Gamma,
\end{align}
where $\partial_{\normal}$ denotes the normal derivative with the outer unit normal vector $\normal(\cdot)$ on $\Gamma \subseteq \partial\Omega$. For $0 \le \alpha \le 1$, define $\H^\alpha(\Gamma) := \set{v \in H^\alpha(\Gamma)}{\supp(v) \subseteq \overline\Gamma}$ and let $H^{-\alpha}(\Gamma)$ be its dual space with respect to $\dual\cdot\cdot$. Note that $\H^{\pm\alpha}(\Gamma) = H^{\pm\alpha}(\Gamma)$ for $\Gamma = \partial\Omega$.
The hyper-singular integral operator $W : \H^{1/2+s}(\Gamma) \to H^{-1/2+s}(\Gamma)$ is a bounded linear operator for all $-1/2 \le s \le 1/2$ which is even an isomorphism for $-1/2 < s < 1/2$. For $s=0$, the operator $W$ is symmetric and (since $\Gamma$ is connected) positive semi-definite with kernel being the constant functions. For $\Gamma \subsetneqq \partial\Omega$, the operator $W : \H^{1/2}(\Gamma) \to H^{-1/2}(\Gamma)$ is hence an elliptic isomorphism. Moreover, for $\Gamma = \partial\Omega$ and $H^{\pm1/2}(\Gamma) := \set{\psi \in H^{\pm1/2}(\Gamma)}{\dual{\psi}{1}=0}$, $W : H^{1/2}_\subspace(\Gamma) \to H^{-1/2}_\subspace(\Gamma)$ is an elliptic isomorphism. Therefore, 
\begin{align*}
 \edual{u}{v} := \begin{cases}
  \dual{Wu}{v}, & \text{if } \Gamma \subsetneqq \partial \Omega,\\
  \dual{Wu}{v} + \dual{u}{v}, & \text{if } \Gamma = \partial \Omega
 \end{cases}
\end{align*}
defines a scalar product on $\H^{1/2}(\Gamma)$, and the induced norm $\enorm{u} := \edual{u}{u}^{1/2}$ is an equivalent norm on $\H^{1/2}(\Gamma)$. Let $f \in H^{-1/2}(\Gamma)$. If $\Gamma \subsetneqq \partial\Omega$, suppose additionally that $f \in H^{-1/2}_\subspace(\partial\Omega)$. Then,~\eqref{eq:hypsing} admits a unique solution $u^\star \in \H^{1/2}(\Gamma)$ resp.\ $u^\star \in H^{1/2}_\subspace(\partial\Omega)$, which is also the unique solution $u^\star \in \H^{1/2}(\Gamma)$ of the variational formulation
\begin{align*}
 \edual{u^\star}{v} = \dual{f}{v}
 \quad \text{ for all } v \in \H^{1/2}(\Gamma).
\end{align*}
Given a mesh $\TT_\bullet$ of $\Gamma$, let 
\begin{align*}
 \SSS^1(\TT_\bullet) := \set{v \in \H^{1/2}(\Gamma)}{\forall T \in \TT_\bullet \quad v|_T \text{ is affine}}.
\end{align*}
The Lax--Milgram theorem yields existence and uniqueness of $u_\bullet^\star \in \SSS^1(\TT_\bullet)$ such that
\begin{align*}
 \edual{u_\bullet^\star}{v_\bullet} = \dual{f}{v_\bullet}
 \quad \text{ for all } v_\bullet \in \SSS^1(\TT_\bullet).
\end{align*}
With the corresponding weighted-residual error estimator, it holds that
\begin{align*}
 \enorm{u^\star - u_\bullet^\star} 
 \le \Crel \, \eta_\bullet(u_\bullet^\star) 
 := \! \bigg( \! \sum_{T \in \TT_\bullet} \eta_\bullet(T,u_\bullet^\star)^2 \! \bigg)^{1/2} \!\!\!,
 \text{ where }
 \eta_\bullet(T,u_\bullet^\star)^2
 := h_T \, \norm{f - W u_\bullet^\star}{L^2(T)}^2;
 \end{align*}
see~\cite{cs95, cc97} for $d = 2$ resp.~\cite{cmps04} for $d = 3$.

In~\cite{dissFuehrer,MR3612925}, optimal additive Schwarz preconditioners are derived for this setting. Hence, Algorithm~\ref{algorithm} can also be used in the present setting. 
We refer to~\cite[Section~3.3]{partTwo} for the fact that the \emph{axioms of adaptivity}~\eqref{axiom:stability}--\eqref{axiom:discrete_reliability} from Proposition~\ref{prop:axioms} remain valid for the hyper-singular integral equation. All other arguments in Section~\ref{proof:algorithm} rely only on general properties of the PCG algorithm (Section~\ref{section:pcg}), the properties~\eqref{axiom:stability}--\eqref{axiom:discrete_reliability}, and the Hilbert space setting of $\enorm\cdot$. Overall, this proves that our main results (Theorem~\ref{theorem:algorithm} and Corollary~\ref{corollary:algorithm}) also cover the hyper-singular integral equation.

\bibliographystyle{alpha}
\bibliography{literature}

\end{document}